\documentclass[a4paper, 11pt, twoside]{article}
\usepackage{appendix}
\usepackage{amsmath,amsthm,empheq}
\usepackage{amssymb,latexsym}
\usepackage{mathrsfs}
\usepackage{enumerate}
\usepackage{graphicx}
\usepackage{float}
\usepackage[colorlinks,citecolor=blue,dvipdfm]{hyperref}
\usepackage[numbers,sort&compress]{natbib}
\usepackage{indentfirst}

\usepackage{mathrsfs}

\DeclareMathAlphabet{\mathpzc}{OT1}{pzc}{m}{it}

\newtheorem{theorem}{Theorem}[section]
\newtheorem{corollary}{Corollary}[section]
\newtheorem{lemma}{Lemma}[section]
\newtheorem{proposition}{Proposition}[section]
\newtheorem{definition}{Definition}[section]
\newtheorem{remark}{Remark}[section]

\newtheorem{example}{Example}[section]

\theoremstyle{definition} \theoremstyle{remark}
\numberwithin{equation}{section}
\allowdisplaybreaks

\begin{document}
	\markboth{X. Huang, L. Peng, J. C. Pozo, Y. Zhou}{Stochastic time nonlocal telegram equations}
	
	\date{}
	\baselineskip 0.22in
	\title{{\bf Well-posedness of stochastic time-nonlocal telegraph equations with H\"{o}lder diffusion coefficient: hereditary phase-space lifting and novel generalized coupling method}}
	
	\author{Xi Huang$^{1}$, Li Peng$^{1}$\thanks{\footnotesize {Corresponding author.} }, Juan Carlos Pozo$^{2}$, Yong Zhou$^{1,3}$ \\[1.8mm]
		\footnotesize  {$^1$Faculty of Mathematics and Computational Science, Xiangtan University,}\\
		\footnotesize  {Hunan 411105, China}\\
		\footnotesize  {$^2$Acad\'{e}mico Instituto de Ciencias de la Ingenier\'{\i}a,
			Universidad de O'Higgins, Rancagua, Chile}\\
		\footnotesize  {$^3$Faculty of Information Technology, Macau University of Science and Technology,}\\
		\footnotesize  {Macau 999078, China}
	}
	
	\maketitle
	
	\begin{abstract}
		We consider the initial-boundary value problem for the stochastic time-nonlocal telegraph equation with $(\mathcal{PC}_\varepsilon)$-type kernel $a$:
		\begin{align*}
			\gamma \partial_t \left( a \ast \partial_t (a \ast v)\right) =\Delta v-\partial_t (a \ast v)+ \Psi(v)+ \Phi(v) \frac{\mathrm{d}W(t)}{\mathrm{d}t},
		\end{align*}
		where $W$ is a space-time Gaussian white noise, $\Psi$ satisfies a linear growth condition, and $\Phi$ is H\"{o}lder continuous and uniformly nondegenerate. This model characterizes high-frequency signal propagation in small-scale systems under stochastic fluctuations. We develop a new hereditary phase-space lifting framework for time-nonlocal telegraph equations. In addition, we propose a novel generalized coupling framework, which features a new construction of the damping control term for the velocity. Based on these analytic tools, we prove the first results on weak existence and uniqueness in law for mild solutions, valued in $L_{loc}^2(\mathbb R_+; H^{\delta})$, to the stochastic nonlocal telegraph equation. The regularity index $\delta$ can be arbitrarily close to $\min\{\frac{1}{2},\frac{\varepsilon}{2+\varepsilon} \}$ from below, and the admissible lower bound of the H\"{o}lder exponent $\kappa$ is quantitatively determined by the integrability exponent $\varepsilon$(see \eqref{2.1}). For the corresponding IBVP of the stochastic damped wave equation, obtained by replacing $a$ with the Dirac measure $\delta_0$, $\kappa$ can be improved to any value in $(\frac{3}{5}, 1]$. More significantly, the generalized coupling framework also handles low-regularity nonlinearities depending on both displacement and velocity. \\[2mm]
		{\bf Key words:} Stochastic time nonlocal telegraph equation; hereditary phase-space lifting; generalized coupling method; weak existence; uniqueness in law. \\[2mm]
		{\bf 2010 MSC:} 60H15; 35S11; 35B30
	\end{abstract}
	
	\tableofcontents
	
	\section{Introduction}
	
	The telegraph equation is a classical hyperbolic model for signal transmission in elastic media and pulse propagation in electrical circuits. It is also widely employed in microdevices and high-frequency transmission systems.
	While the classical telegraph equation relies solely on a local dissipative differential term, it fails to account for memory effects in media with delayed response. In recent years, the time-nonlocal telegraph equation, which incorporates the convolution operator
	$a \ast \cdot$ with a memory kernel to model hysteretic dynamics, has accordingly emerged as a central subject in the study of nonlocal partial differential equations. For example,
	Pozo and Vergara \cite{J. C. Pozo 20} introduced a family of balance equations governing transmission lines with generalized memory-type constitutive effects
	\begin{align*}
		&\partial_{x} v(t,x)+\frac{D_0 h_0}{W_0} \partial_{t}(\mathrm{d} \vartheta_1 \ast i(\cdot,x))(t)+\frac{h_0}{A_0}\left(\mathrm{d} \overline{\vartheta}_{R} \ast i(\cdot,x)\right)(t)=0,\\
		&\partial_{x} i(t,x)+\frac{A_0}{h_0} \partial_{t}(\mathrm{d} \vartheta_2 \ast v(\cdot,x))(t)+\frac{A_0}{h_0}\left(\mathrm{d} \vartheta_{G} \ast v(\cdot,x)\right)(t)=0.
	\end{align*}
	Here $v(t,x)$ and $i(t,x)$ are the voltage and current at position $x$ and time $t$, respectively. The constants $D_0, h_0, W_0, A_0$ are fixed physical parameters. The measures $\mathrm{d}\vartheta_1$, $\mathrm{d}\vartheta_2$, and  $\mathrm{d}\vartheta_G$ correspond to the magnetic permeability, dielectric constant, and parallel leakage conductance, respectively, while
	$\mathrm{d}\bar{\vartheta}_R$ is the measure of inverse conductivity for the series resistance. For $\vartheta \in BV_{loc}(\mathbb R_+)$, the convolution is defined by $(\mathrm{d}\vartheta\ast v)(t) := \int_0^t v(t-\zeta)\, \mathrm{d}\vartheta(\zeta)$. Combining the above relations yields the following generalized time-nonlocal telegraph equation
	\begin{align}\label{1.1}
		\begin{aligned}
			\frac{A_0D_0}{W_0} \partial_t \left( \mathrm{d}\vartheta_1 \ast \partial_t (\mathrm{d}\vartheta_2 \ast v) \right)&=\partial_{x}^2v - \frac{A_0D_0}{W_0} \partial_t (\mathrm{d}\vartheta_1 \ast \mathrm{d}\vartheta_G \ast v )
			\\
			&\quad -\left( \mathrm{d}\bar{\vartheta}_R \ast \partial_t(\mathrm{d}\vartheta_2\ast v )\right)- (\mathrm{d}\bar{\vartheta}_R \ast \mathrm{d}\vartheta_G \ast v ).
		\end{aligned}
	\end{align}
	
	In \eqref{1.1}, $\mathrm{d}\vartheta_1$ and $\mathrm{d}\vartheta_2$ independently characterize the inductive and capacitive storage effects in the medium. Since their relative strength determines the signal propagation behavior, it is natural to ask how one should formulate a telegraph equation that quantifies the relative intensity of $\mathrm{d}\vartheta_1$ against $\mathrm{d}\vartheta_2$?
	
	In the high-frequency transmission regime, the leakage resistance is negligible, so we set $\mathrm{d}\vartheta_G \equiv 0$.
	Since ordinary resistance exhibits only instantaneous, non-memory response, we normalize $\mathrm{d}\bar{\vartheta}_R =\delta_0(t)\,\mathrm{d}t$. To describe the relative strength between $\mathrm{d}\vartheta_1$ and  $\mathrm{d}\vartheta_2$, we assume a linear scaling  $\mathrm{d}\vartheta_1 = \alpha \mathrm{d} \vartheta_2$. To isolate the memory effect, we take the capacitive contribution as $\mathrm{d}\vartheta_2(t) = a(t)\,\mathrm{d}t$ with $a \in L_{loc}^1(\mathbb R_+)$. Under these physical simplifications, the constitutive relations in \eqref{1.1} take the form
	\begin{align*}
		\mathrm{d}\vartheta_1(t) = \alpha a(t)\mathrm{d}t, \ \mathrm{d}\vartheta_2(t) = a(t)\mathrm{d}t, \ \mathrm{d}\bar{\vartheta}_R = \delta_0, \ \mathrm{d}\vartheta_G \equiv 0.
	\end{align*}
	Set $\gamma = \frac{AD}{W_0} \alpha $, which quantifies the intensity of $\mathrm{d}\vartheta_1$ relative to
	$\mathrm{d} \vartheta_2$. With the above constitutive relations, we derive the homogeneous time-nonlocal telegraph equation
	\begin{align*}
		\gamma \partial_t \left(a \ast \partial_t (a \ast v(\cdot,x)) \right)
		(t)+ \partial_t(a\ast v(\cdot,x) )(t) -\partial_{x}^2 v(t,x)= 0.
	\end{align*}
	
	Prior work has largely focused on the homogeneous time-nonlocal telegraph equation with $a$ satisfying $ (\mathcal{PC})$, exploring the connection between its fundamental solution and stochastic processes. The condition $(\mathcal{PC})$ is stated as follows:
	\begin{enumerate}
		\item[$(\mathcal{PC})$] The function $a \in L^1_{loc}(\mathbb R_+)$ is nonnegative and nonincreasing, and there exists a function $b \in L_{\text{loc}}^{1}(\mathbb{R}_+)$ such that
		$(a \ast b)(t) = 1$ for $t>0$.
	\end{enumerate}
	For instance, Pozo and Vergara \cite{J. C. Pozo 20} were the first to prove that the fundamental solution of the initial value problem on the whole real line
	\begin{align*}
		\partial^2_t \left(a \ast a \ast v(\cdot,x) \right)
		(t)+ \partial_t(a\ast v(\cdot,x) )(t) -\partial_{x}^2 v(t,x)= 0, \quad t>0, \ x \in \mathbb R
	\end{align*}
	coincides with the probability density of a certain stochastic process $X(t)$. By invoking the Karamata-Feller Tauberian theorem (\cite[Chapter XIII]{W. Feller}), they further characterized the asymptotic behavior of the variance of this process over different time scales. Subsequently,
	Alegr\'{i}a and Pozo \cite{F. Alegria} devised a systematic procedure for constructing $(\mathcal{PC})$ kernels for which the associated variance $X(t)$ has variance growing sublinearly or logarithmically.  More recently, Alegr\'{i}a and Pozo \cite{F. Alegria 23} examined the non-Markovian telegraph process $X_a(t)$ on $\mathbb R^N$, whose density satisfies
	\begin{align*}
		\begin{cases}
			\partial^2_t \left(a \ast a \ast (v(\cdot,x)-\delta_0) \right)
			(t)+ \partial_t(a\ast (v(\cdot,x)-\delta_0) )(t) -\Delta v(t,x)= 0, \quad t>0, \ x \in \mathbb R^N,\\
			v(0,x)=\delta_0(x), \quad x \in \mathbb R^N.
		\end{cases}
	\end{align*}
	They proved that the moments of $X_a(t)$
	satisfy the Carleman condition, and further demonstrated that the probability distribution of this process can be obtained via a subordination transformation of the classical telegraph process.
	
	Even though the nonlocal telegraph equation admits a reformulation as a Volterra-type integral equation, the kernel of the resulting integral equation does not necessarily satisfy the complete positivity condition (cf. \cite{J. C. Pozo 20, X. Huang 1}). Hence the standard subordination framework is not directly available. To resolve this obstacle, we developed in our previous work \cite{X. Huang 1} a novel approach to constructing completely positive kernels. Based on this construction, we showed that, on $\mathbb R^3$, the solution operator for the time-nonlocal telegraph equation with measure-valued kernels (including $(\mathcal{PC})$-type kernel) is given by a subordination of the operator
	$\cos(\theta(-\Delta)^{\frac{1}{2} })$,  the solution operator of the wave equation. This advance enables us to extend the study of nonlocal telegraph equations from linear dynamics to complex nonlinear dynamics. Although the above subordination representation is highly efficient for deriving deterministic a priori estimates of the solution operator, the operator
	$\cos(\theta(-\Delta)^{\frac{1}{2} })$ itself lacks any compactifying effect with respect to the time variable. This poses a substantial obstacle to the compactness analysis of the subordinated solution operator. Since compactness is essential for well-posedness in stochastic PDEs, new analytic tools are urgently needed to resolve this difficulty.
	
	For the damped wave equation, which is a special case of \eqref{1.1},  it is a well-established and effective approach to construct the first-order evolutionary state variable $(v, \partial_t v)$ and rewrite the equation as a first-order evolution system via phase-space lifting. This strategy has proven effective in simultaneously deriving a priori estimates and establishing compactness, as evidenced by the results in \cite{S. Cerrai, S. Cerrai 1}. For the time-nonlocal telegraph equation, however, the same state variables $(v, \partial_t v)$ are inappropriate, since they are incapable of encoding the memory property inherent in the model, i.e., the dependence of the system evolution on the whole history. A natural question therefore arises: can we construct a phase-space lifting framework for the nonlocal telegraph equation that both simplifies the analysis and captures the memory structure? In this paper, we provide an affirmative answer to this question.
	
	For the treatment of low-regularity nonlinearities, particularly those that are only H\"{o}lder continuous, a generalized coupling framework was introduced in \cite{A. Kulik}. Its core idea is to construct a generalized coupling via stochastic control and then reduce it to a classical coupling, where the shift in marginal distributions induced by the control is compensated appropriately. This framework was later applied by Han \cite{Y. Han 1}  to prove weak uniqueness and exponential ergodicity for infinite-dimensional stochastic heat equations with H\"{o}lder coefficients, as well as for their Burgers-type generalizations.
	Since this generalized coupling approach is characterized by the introduction of displacement damping in the construction of the control term, we shall call it the generalized coupling framework with displacement damping, abbreviated as the GC-D method.
	
	To our knowledge, however, no coupling analysis has yet been developed for nonlocal telegraph equations that can accommodate both nonlinearities and low-regularity coefficients, such as  H\"{o}lder continuous ones. Moreover, we observe that the existing GC-D method suffers from an inherent limitation: it fails to extract damping decay factors in the moment estimates for the velocity component of the stochastic damped wave equation, thereby hindering a unified treatment of nonlinearities that depend on both displacement and velocity. Detailed arguments are provided in Section \ref{sec 7}.
	
	Motivated by the central gaps in the existing literature outlined above, we study the following IBVP for the stochastic time-nonlocal telegraph equation
	\begin{align}\label{1.2}
		\begin{cases}
			\gamma \partial_t \left( a \ast \partial_t(a \ast v(\cdot,x))\right)(t)=\Delta v(t,x)-\partial_t \left(a \ast v(\cdot,x) \right)(t) \\
			\quad\quad\quad\quad+ \Psi(v(t,x) ) + \Phi(v(t,x)) \frac{\mathrm{d}W(t)}{\mathrm{d}t} , \quad t>0, \ x \in  (0,1), \\
			v(t,0) =v(t,1) = 0, \quad t>0,\\
			(a \ast v(\cdot,x))(0) =u_0(x), \ \left(a \ast \partial_t(a \ast v(\cdot,x))\right)(0)=u_1(x), \quad x \in (0,1).
		\end{cases}
	\end{align}
	where $\gamma >0$, $\Delta$ is the Laplacian, $\Psi$ and $\Phi$ are low-regularity drift and diffusion coefficients, respectively, with assumptions detailed in $(\mathcal M_1) \sim (\mathcal M_4)$ of Section \ref{sec 2}, and $W$ is a $(\mathcal{F}_t)_{t \geq 0}$-cylindrical Wiener process over $H$. The kernel $a$ satisfies:
	\begin{itemize}
		\item[$(\mathcal{PC}_\varepsilon)$] The function $a \in L^1_{loc}(\mathbb R_+)$ is nonnegative and nonincreasing, and there exists a nonincreasing function $b \in L_{\text{loc}}^{2+\varepsilon}(\mathbb{R}_+)$ with some $\varepsilon > 0$ such that
		$(a \ast b)(t) = 1$ for $t>0$.
	\end{itemize}
	Clearly, $(\mathcal{PC}_\varepsilon)\subset(\mathcal{PC})$ and $(\mathcal{PC}_{\varepsilon_2}) \subset (\mathcal{PC}_{\varepsilon_1})$ for $\varepsilon_1 < \varepsilon_2$. The model captures memory effects, nonlinear displacement evolution, and random fluctuations, thereby providing a precise description of high-frequency signal transmission in small-scale systems under stochastic perturbations.
	
	We invoke the theory of completely positive kernel Volterra equations to establish a hereditary phase-space lifting framework adapted to  weakly regular kernels of type
	$(\mathcal{PC}_\varepsilon)$. In contrast to the classical instantaneous phase point construction $(v,\partial_t v)$ for integer-order memoryless systems, which relies only on instantaneous quantities and yields finite-dimensional Markovian dynamics, we introduce a new lifted phase $Y(t) = (v(t), \partial_t (a \ast v)(t))$. This lifted phase fully encodes the historical dependence of the entire trajectory from the initial time onward, and is fundamentally different from the classical one: the evolution equation of $Y(t)$ still contains hereditary convolution terms and cannot be reduced to Markovian dynamics determined solely by the current state.
	
	In addition, we develop a novel generalized coupling framework based on velocity damping, which we refer to as the GC-V method. The way in which velocity damping is incorporated in this framework enables the simultaneous extraction of high-order damping decay factors in the moment estimates of both the displacement and velocity components (see Lemmas \ref{lem 5.1} and \ref{lem 5.2}). This feature provides a crucial analytical foundation for treating low-regularity nonlinearities that depend on both the displacement and velocity fields.
	
	Building upon the hereditary phase-space lifting and the new GC-V method, we establish a complete well-posedness result for \eqref{1.2}: for each $T>0$, \eqref{1.2} admits a weak mild solution as $L^2([0,T]; H^\delta)$-valued process, and the solution is unique in the sense of law. The Sobolev regularity index $\delta$ of the solution can be chosen arbitrarily close to  $\min\{\frac{1}{2},\frac{\varepsilon}{2+\varepsilon} \}$ from below. Moreover, our analysis shows that the admissible lower bound of the H\"{o}lder exponent $\kappa$ for the diffusion coefficient in \eqref{1.2} depends on the integrability exponent $\varepsilon$ of the kernel $b$, and decreases as
	$\varepsilon$ increases. This dependence possibly reveals an underlying mechanism by which the regularity of the memory kernel affects the admissible regularity of the noise in the stochastic system.
	
	In particular, when the kernel $a(t)$ is replaced by the Dirac measure $\delta_0$, \eqref{1.2} degenerates into the IBVP of the stochastic damped wave equation with
	a H\"{o}lder continuous diffusion coefficient. In this case, the admissible lower bound of the H\"{o}lder exponent $\kappa$ for the diffusion coefficient can be improved to $\frac{3}{5}$ (excluding the endpoint), see Section \ref{sec 7}. More significantly, even when the nonlinear term couples the displacement $v$ and  the velocity $\partial_t v$, our GC-V method still yields a complete well-posedness theory (see Corollary \ref{cor 7.1}). These results show that the GC-V method is a broadly applicable coupling analysis framework with significant advantages.
	
	The paper is structured as follows. Section \ref{sec 2} presents the preliminaries: notation, function spaces, assumptions, stochastic integration, and completely positive kernels for Volterra equations. Section \ref{sec 3} develops a hereditary phase-space lifting framework for the stochastic nonlocal telegraph equation and a novel representation of mild solutions. Section \ref{sec 4} develops a new extraction procedure for velocity damping decay factors and establishes several deterministic a priori estimates. Section 5 demonstrates how to extract velocity damping decay factors in stochastic integral moment estimates and proves the compactness condition for solution trajectories. Section \ref{sec 6} introduces a new coupling analysis framework, which is then used to prove the existence and uniqueness in law of weak mild solutions. Section \ref{sec 7} provides explicit kernel constructions satisfying $(\mathcal{PC}_\varepsilon)$, highlights the advantages of the GC-V method, and outlines potential extensions.
	
	\section{Preliminaries}\label{sec 2}
	\subsection{Notations and Assumptions}
	The Laplace transform of a function $f_1:(0,\infty) \to \mathbb C$
	is denoted as $\widehat{f}_1(\lambda)$. Recall that whenever $(a,b) \in (\mathcal{PC})$, the Laplace transforms   $\widehat{a}(\lambda)$ and $\widehat{b}(\lambda)$ are well defined for each $\lambda \in \mathbb C$ satisfying $Re(\lambda)>0$.
	
	Let $\{e_n\}_{n=1}^\infty$ stand for an orthonormal basis of $L^2([0,1])$ formed by eigenfunctions associated with $-\Delta$
	subject to the homogeneous boundary condition, i.e.,
	\begin{align*}
		-\Delta e_j = \lambda_j e_j \text{ ~in~ } [0,1],\quad  e_j(0) =e_j(1)=0,
	\end{align*}
	where $\lambda_j =\pi^2 j^2$ with $j \in\mathbb N$. For convenience, we denote $L^2([0,1])=H$. For any $\delta \in \mathbb{R}$, denote $H^\delta([0,1])$ the fractional Sobolev space of order $\delta$ on $[0,1]$, which we shall simply write as
	$H^\delta$. This space is endowed with the norm
	\begin{align*}
		\|f\|_{H^\delta}^2 = \sum_{j=1}^\infty \lambda_j^\delta |\langle f, e_j \rangle|^2,
	\end{align*}
	where $\langle \cdot,\cdot \rangle$ denotes the inner product in $H$. We define the phase space $\mathcal{H}^\delta := H^\delta \times H^{\delta-1}$.  For each $N \in \mathbb N_+$, we denote by $P_N$ the orthogonal projection onto the span of $\{e_j\}_{j \leq N}$.
	
	To simplify the notation, we adopt the following convention throughout this paper. The letter $C$ denotes a generic positive constant. The notation $\lesssim$
	is used to absorb universal constants in
	estimates. Subscripted constants such as
	$C_{c_1,c_2}$ indicate dependence on the parameters $c_1$ and $c_2$. None of these symbols refer to a fixed numerical value. They merely indicate that the corresponding term can be bounded by some constant.
	
	We shall now state the assumptions required for our main results.
	\begin{enumerate}
		\item[$(\mathcal M_1)$] The map $\Psi: L_{loc}^2(\mathbb R_+; H) \to L_{loc}^2(\mathbb R_+; H)$ is casual in the sense that, for any $T>0$ and any $w_1, w_2 \in L_{loc}^2(\mathbb R_+;H)$, if $w_1=w_2$ a.e. on $[0,T]$, then $\Psi(w_1) = \Psi(w_2)$ a.e. on $[0,T]$ in $H$.
		Moreover, for each $T>0$, the induced map $\Psi_T: L^2([0,T]; H) \to L^2([0,T]; H)$ is Borel measurable.
		
		\item[($\mathcal M_2$)] For each $v \in H$, $\Phi(v)$ has a right inverse $\Phi^{-1}(v)$ on $H$ in the sense that $\Phi(v)\Phi^{-1}(v)u = u$ for any $u \in H$,
		with $\Phi^{-1}(v)H \subset H$ and satisfies
		\begin{align*}
			\sup_{v \in H} \|\Phi^{-1}(v)\|_{\mathcal L(H)} \leq C \text{ ~for some~ } C>0.
		\end{align*}
		In addition, $\Phi$ id self-adjoint, in the sense that for any $v \in H$, $\langle \Phi(v)e_i,e_j\rangle =\langle e_i,\Phi(v)e_j\rangle$ for any $i,j\in\mathbb N_+$.
		
		\item[$(\mathcal M_3)$] $\Phi$ admits an extension to a map $\Phi: L_{loc}^2(\mathbb R_+;H)  \to L_{loc}^2(\mathbb R_+; \mathcal L(H))$, and is assumed to be
		$\kappa$-H\"{o}lder continuous in the sense that, for each $T>0$, there exists a constant $C_T$ such that
		\begin{align*}
			\|\Phi(w_1) - \Phi(w_2)\|_{L^2([0,T];{\mathcal L}(H)) } \leq C_T\|w_1 - w_2\|_{L^2([0,T];H)}^\kappa
		\end{align*}
		for any $w_1, w_2 \in L_{loc}^2(\mathbb R_+;H)$, where
		\begin{align}\label{2.1}
			\kappa>\begin{cases}
				1-\frac{\varepsilon^2}{2\varepsilon^2+4\varepsilon+4}, \text{ ~if~ } \varepsilon \in (0,2), \\
				1-\frac{2\varepsilon}{5(2+\varepsilon)}, \text{ ~if~ } \varepsilon \in [2,\infty).
			\end{cases}
		\end{align}
		Additionally, there exists a positive constant
		$C_T'$ depending only on $T$ such that
		\begin{align}\label{2.2}
			\begin{aligned}
				&\quad\|P_N\Phi(w_1) - P_N\Phi(w_2)\|_{L^2([0,T];{\mathcal L}(H)) }\\
				&\leq C_T'  \|\Phi(P_Nw_1) - \Phi(P_Nw_2)\|_{L^2([0,T];{\mathcal L}(H))}, \quad w_1, w_2 \in L_{loc}^2(\mathbb R_+;H),
			\end{aligned}
		\end{align}
		holds uniformly for all sufficiently large $N\in \mathbb N_+$.
		
		\item[$(\mathcal M_4)$] For each $T>0$, there exist some constant $C_{\Psi,T}$ and $C_{\Phi,T}> 0$ such that
		\begin{align*}
			&\|\Psi(v)\|_{L^2([0,T],H)} \leq C_{\Psi,T}(1 + \|v\|_{L^2([0,T];H)}).\\
			&\|\Phi(v)\|_{L^2([0,T],\mathcal L(H) )} \leq C_{\Phi,T}(1 + \|v\|_{L^2([0,T];H)}).
		\end{align*}
	\end{enumerate}
	
	\begin{remark}\label{rem 2.1}
		We can also provide an example of spectral-type multiplicative diffusion coefficient that satisfies all the assumptions stated above. For each
		$v\in H$, define $\Phi(v) \in \mathcal L(H)$ by its spectral decomposition
		\begin{align*}
			\Phi(v)u = \sum_{j=1}^\infty f_j(\langle v,e_j \rangle)\langle u,e_j \rangle e_j, \quad u \in H,
		\end{align*}
		where the functions $f_j:\mathbb R \to \mathbb R\setminus (-C_0,C_0)$(with $C_0>0$ independent of $j$) are uniformly H\"{o}lder continuous in $j$ and satisfy a uniform linear growth condition: there exists constant $C>0$ independent of $j$ such that
		\begin{align*}
			|f_j(y_1)-f(y_2)| \leq C |y_1-y_2|^\kappa, \ |f_j(y)|\leq C(1+|y|),
		\end{align*}
		where $\kappa$ as in \eqref{2.1}.
	\end{remark}
	
	\subsection{Infinite I'to Integral and stochastic convolution}
	Suppose $(U, \langle \cdot, \cdot \rangle_U, \|\cdot\|_U)$ and $(V, \langle \cdot, \cdot \rangle_V, \|\cdot\|_V)$ are two separable Hilbert spaces, and $W$ is a $(\mathcal{F}_t)_{t \geq 0}$-cylindrical Wiener process over $U$.
	We write $\mathcal L_2(U, V)$ for the separable space of all Hilbert-Schmidt operators mapping $U$ into $V$, which is endowed with the canonical Hilbert-Schmidt norm $\|\varTheta\|_{{\mathcal L}_2(U,V)}^2 = \sum_{j=1}^{\infty} \|\varTheta e_j^U\|_V^2$.
	
	Consider a random mapping $\varphi : [0, T] \times \Omega \to \mathcal L_2(U, V)$ that is jointly measurable and $(\mathcal{F}_t)_{t \geq 0}$-adapted, and fulfills the integrability condition  $\int_0^T \mathbb{E} \left[ \|\varphi(\zeta)\|_{\mathcal L_2(U,V)}^2 \right]\,\mathrm{d}t < \infty$. Then the stochastic integral defined by
	\begin{align*}
		t \mapsto \int_0^t \varphi(\zeta)\, \mathrm{d}W(\zeta) := \sum_{j=1}^{\infty} \int_0^t \varphi(\zeta) e_j^U\, \mathrm{d}\beta_j(\zeta)
	\end{align*}
	yields a continuous martingale with values in $V$. Here $(\beta_j(t))_{t \geq 0}$ is a sequence of independent standard Brownian motions.
	
	We use $\mathcal L(U, V)$ to represent the space of bounded linear operators from $U$ to $V$, equipped with the strong operator topology. Given exponents $q \in [1, \infty)$, $p \in [1, \infty)$ and $T > 0$,
	we introduce the Banach space $\mathcal{X}^{q,p}([0, T]; V)$. Its elements are all measurable and  $(\mathcal{F}_t)_{t \in [0,T]}$-adapted  random processes $\varphi : [0, T] \times \Omega \to V$ with finite norm given by
	\begin{align*}
		\|\varphi\|_{\mathcal{X}^{q,p}([0,T]; V)} = \left( \int_0^T \left( \mathbb{E} \left[ \|\varphi(t)\|_V^q \right] \right)^{\frac{p}{q}} dt \right)^{\frac{1}{p}}.
	\end{align*}
	
	Throughout this paper, we frequently utilize the convolution between operator families and stochastic integrals. For a given operator family $(Q(t))_{t \in [0, T]} \subset \mathcal L(V)$, we define the stochastic convolution operation as
	\begin{align*}
		(Q \ast  \varphi \mathrm{d}W)(t) := \int_0^t Q(t - \zeta)\varphi(\zeta)\, \mathrm{d}W(\zeta), \quad t \in [0, T].
	\end{align*}
	We next provide a sufficient criterion guaranteeing the associativity property of stochastic convolutions combined with deterministic operator convolutions.
	\begin{lemma}\label{lem 2.1}
		(\cite[Lemma 2.1]{L. A. Bianchi}) Let $V_0, V_1$ be separable Hilbert spaces. Assume that $\varphi \in \mathcal{X}^{2,2}([0, T]; \mathcal L_2(U, V))$,  $G \in L^2([0, T]; \mathcal L(V, V_0))$, and $F \in L_{loc}^1(\mathbb{R}_+; \mathcal L(V_0, V_1))$. Then it holds that
		\begin{align*}
			F\ast(G\ast \varphi \mathrm{d}W) = (F\ast G)\ast \varphi \mathrm{d}W,
		\end{align*}
		where all deterministic and stochastic convolution terms appearing in the equality are well-defined.
	\end{lemma}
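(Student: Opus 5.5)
The plan is to verify the identity on the level of its definitions and reduce it to a stochastic Fubini theorem. Write both sides explicitly. The inner stochastic convolution is
\[
(G\ast\varphi\,\mathrm{d}W)(s)=\int_0^s G(s-\zeta)\varphi(\zeta)\,\mathrm{d}W(\zeta).
\]
The left-hand side is therefore $\int_0^t F(t-s)\int_0^s G(s-\zeta)\varphi(\zeta)\,\mathrm{d}W(\zeta)\,\mathrm{d}s$. The right-hand side is $\int_0^t\big(\int_\zeta^t F(t-s)G(s-\zeta)\,\mathrm{d}s\big)\varphi(\zeta)\,\mathrm{d}W(\zeta)$, since $(F\ast G)(t-\zeta)=\int_\zeta^t F(t-s)G(s-\zeta)\,\mathrm{d}s$ after the change of variables. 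So the claim is exactly an interchange of the $\mathrm{d}s$ and $\mathrm{d}W(\zeta)$ integrations over the triangle $0\le\zeta\le s\le t$, with integrand $\Xi(s,\zeta)=\mathbf 1_{\zeta\le s}F(t-s)G(s-\zeta)\varphi(\zeta)$.

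Step 1 is well-definedness. Since $G\in L^2([0,T];\mathcal L(V,V_0))$ and $\varphi\in\mathcal X^{2,2}$, the Itô isometry gives
\[
\mathbb E\|(G\ast\varphi\,\mathrm{d}W)(s)\|_{V_0}^2\le\int_0^s\|G(s-\zeta)\|^2\,\mathbb E\|\varphi(\zeta)\|_{\mathcal L_2}^2\,\mathrm{d}\zeta .
\]
By Young's inequality this is in $L^1(0,T)$, so $G\ast\varphi\,\mathrm{d}W\in\mathcal X^{2,2}([0,T];V_0)$. Convolving an $L^1$ operator kernel $F$ with an $L^2$-in-time process is then well-defined, and Bochner integrable a.s. For the right-hand side, $F\ast G\in L^2([0,T];\mathcal L(V,V_1))$ by Young ($L^1\ast L^2\subset L^2$), so the same isometry estimate applies.

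Step 2 is the Fubini step. I would apply the stochastic Fubini theorem (Da Prato–Zabczyk, Thm 4.33). Its hypothesis is
\[
\int_0^t\Big(\int_0^t\mathbb E\|\Xi(s,\zeta)\|_{\mathcal L_2(U,V_1)}^2\,\mathrm{d}\zeta\Big)^{1/2}\mathrm{d}s<\infty .
\]
This is bounded by $\int_0^t\|F(t-s)\|\big(\int_0^s\|G(s-\zeta)\|^2\,\mathbb E\|\varphi(\zeta)\|^2\,\mathrm{d}\zeta\big)^{1/2}\mathrm{d}s$. That is at most $\|F\|_{L^1}\|G\|_{L^2}\|\varphi\|_{\mathcal X^{2,2}}$ after taking the sup of the inner quantity, bounded via Young with the $L^1$ function $\|G\|^2\ast\mathbb E\|\varphi\|^2$.

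The main obstacle is strong measurability. $\mathcal L(V,V_0)$ carries the strong operator topology, so joint measurability of $\Xi$ in $(s,\zeta,\omega)$ must be checked pointwise on vectors. To handle this, I would first prove the identity for $F$ and $G$ simple (finite-valued step functions), where it is elementary. I would then pass to the limit using the above $L^1$–$L^2$ estimates, which are continuous in $F\in L^1$ and $G\in L^2$, with equality holding in $L^2(\Omega;V_1)$ for a.e. $t$. Since this is exactly \cite[Lemma 2.1]{L. A. Bianchi}, the paper may simply cite it.
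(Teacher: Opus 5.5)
The paper gives no argument for this lemma; it simply quotes Lemma 2.1 of Bianchi et al. Your reduction (rewrite both sides as iterated integrals over $0\le\zeta\le s\le t$, then apply the stochastic Fubini theorem) is the natural proof, but two of your steps fail as written.

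\textbf{The integrability check.} The quantity to control is $k(t):=\int_0^t\|F(t-s)\|\,h(s)^{1/2}\,\mathrm{d}s$ with $h:=\|G(\cdot)\|^2\ast\mathbb E\|\varphi(\cdot)\|^2_{\mathcal L_2(U,V)}$. Young's inequality only gives $h\in L^1(0,T)$, and $h$ need not be bounded. For example, if $\|G(r)\|^2=r^{-1/2}$ and $\mathbb E\|\varphi(\zeta)\|^2=|\zeta-\zeta_0|^{-1/2}$, then $h(\zeta_0)=\infty$. So ``taking the sup of the inner quantity'' is not available, and there is no bound by $\|F\|_{L^1}\|G\|_{L^2}\|\varphi\|_{\mathcal X^{2,2}}$ that is uniform in $t$.

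What is true is $\|k\|_{L^2(0,T)}\le\|F\|_{L^1(0,T)}\|h\|_{L^1(0,T)}^{1/2}\le\|F\|_{L^1}\|G\|_{L^2}\|\varphi\|_{\mathcal X^{2,2}}$. Hence the Fubini hypothesis holds for a.e. $t\in[0,T]$. That suffices, and it is the right form of the lemma anyway. The left-hand side is an $L^1\ast L^2$ convolution in time, and the right-hand side needs $\big(\|(F\ast G)(\cdot)\|^2\ast\mathbb E\|\varphi(\cdot)\|^2\big)(t)<\infty$. So both sides are in general only defined for a.e. $t$.

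\textbf{The density argument for measurability.} Since $\mathcal L(V,V_0)$ carries the strong operator topology, $F$ and $G$ are merely strongly measurable. Step functions are then not dense in $L^1$ or $L^2$ for the operator norm. Take $F(s)f:=f(\cdot-s)$ on $V_0=V_1=L^2(\mathbb R)$: then $\|F(s)-F(s')\|=2$ for $s\neq s'$. Consequently every finite-valued $F_n$ satisfies $\|F(s)-F_n(s)\|\ge 1$ for all but finitely many $s$. Thus ``continuity in $F\in L^1$, $G\in L^2$'' cannot carry the identity from step functions to general kernels.

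The approximation is also unnecessary. Because $\mathcal L_2(U,V_1)$ is separable, it is enough that $(s,\zeta,\omega)\mapsto\mathbf 1_{\zeta\le s}F(t-s)G(s-\zeta)\varphi(\zeta,\omega)u_k$ be measurable for an orthonormal basis $(u_k)$ of $U$. This holds because $(r,y)\mapsto G(r)y$ and $(r,y)\mapsto F(r)y$ are measurable in $r$ and continuous in $y$, hence jointly measurable. Since $F$ and $G$ are deterministic, predictability in $(\zeta,\omega)$ is inherited from $\varphi$. If you prefer an approximation argument, approximate $F$ and $G$ strongly with norm domination and use dominated convergence instead of norm continuity.

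With these two repairs, apply the stochastic Fubini theorem directly. Together with $\int_\zeta^tF(t-s)G(s-\zeta)\varphi(\zeta)u_k\,\mathrm{d}s=(F\ast G)(t-\zeta)\varphi(\zeta)u_k$, this gives the identity for a.e. $t$.
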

	
	\subsection{Scalar volterra equations with complete positive kernels}
	Let us revisit fundamental results of the theory of Volterra integral equations. Given $\nu \in \mathbb{C}$ and  $l \in L_{loc}^1(\mathbb R_+)$,
	the scalar resolvent function $s_\nu$ and the integrated scalar resolvent function $r_\nu$ are uniquely defined as the solutions to the following Volterra equations
	\begin{align}
		&s_\nu(t) + \nu(l\ast s_\nu)(t)= 1, \quad t \geq 0. \label{2.3} \\
		&r_\nu(t) + \nu(l\ast r_\nu)(t)= l(t), \quad t>0, \label{2.4}
	\end{align}
	respectively, see \cite[Chapter 2, Theorem 3.1]{G. Gripenberg}.
	Consider a forcing term $f$ belonging to $L_{loc}^p(\mathbb{R}_+)$ for some exponent $p\geq 1$. We focus on the general Volterra integral equation
	\begin{align}\label{2.5}
		v_\nu(t) + \nu(l\ast v_\nu)(t) = f(t), \quad t>0.
	\end{align}
	The unique solution to the above can be explicitly formulated as
	\begin{align}\label{2.6}
		v_\nu(t) = f(t) - \nu (r_\nu \ast f)(t), \quad t>0,
	\end{align}
	which satisfies $v_\nu \in L_{loc}^p(\mathbb R_+)$, see \cite[Chapter 2, Theorem 3.5]{G. Gripenberg}.
	
	We next introduce the notion of complete positivity for integral kernels. A kernel $l$ is said to be completely positive whenever the associated resolvent families $s_\nu$ and $r_\nu$ are nonnegative for $\nu \geq 0$, see \cite[Definition 1.1]{P. Clement 81}.
	An equivalent characterization of complete positivity has been developed in \cite[Theorem 2.2]{P. Clement 81}. Specifically, the kernel $l$ fulfills the complete positivity condition if and only if there exist a constant $k_0\geq 0$ and a nonincreasing nonnegative function $k_1 \in L_{loc}^1(\mathbb{R}_+)$ such that
	\begin{align}\label{2.7}
		k_0l(t) + (k_1\ast l)(t) = 1, \quad t>0.
	\end{align}
	In particular, the structural assumption $(a,b) \in (\mathcal{PC}_\varepsilon)$ imposed throughout this work guarantees the complete positivity of the kernel $b$. It is worth emphasizing that any completely positive kernel is necessarily nonnegative, as verified in \cite[Proposition 2.1(i)]{P. Clement 81}. Further equivalent criteria characterizing complete positivity can be found in \cite[Proposition 4.5]{J. Pruss}.
	
	We next introduce a key decomposition of $k_1$ for subsequent resolvent analysis. Since $k_1$ is nonnegative and nonincreasing, it can be decomposed as $k_1(t)=k_2(t)+k_\infty$. Here $k_\infty\ge0$ denotes a constant offset, and $k_2$ denotes a nonnegative, nonincreasing function decaying to zero as $t\to\infty$.
	\begin{proposition}\label{prop 2.1}
		Assume that $l$ is completely positive. Then the following hold:
		\begin{itemize}
			\item[(i)] Let $\nu \in \mathbb C$ satisfies $Re(\nu) \geq -k_\infty$, then $s_\nu(t)$ and $r_\nu(y)$ can be represented by
			\begin{align*}
				s_\nu(t) = -\int_0^\infty e^{-\nu\theta}  \varpi(t,\mathrm{d}\theta), \ r_\nu(t)=\int_0^\infty e^{-\nu\theta} \eta(t,\mathrm{d}\theta)
			\end{align*}
			where $-\varpi(t,\theta)$ and $\eta(t,\mathrm{d}\theta)$ are postive finite measures satisfies
			\begin{align*}
				-\widehat{\varpi}(\lambda,\mathrm{d}\theta)= \frac{1}{\lambda\widehat{l}(\lambda)} e^{ -\frac{\theta} {\widehat{l}(\lambda)} }\mathrm{d}\theta, \ \widehat{\eta}(\lambda,\mathrm{d}\theta) = e^{ -\frac{\theta} {\widehat{l}(\lambda)} }\mathrm{d}\theta.
			\end{align*}
			Typically, it holds that
			\begin{align*}
				-\int_0^\infty \varpi(t,\mathrm{d}\theta) =1, \ \int_0^\infty \eta(t,\mathrm{d}\theta) =l(t).
			\end{align*}
			\item[(ii)] Let $\nu \in \mathbb C$, the function $s_\nu(t)$ has the following representation
			\begin{align*}
				s_\nu(t) = 1 - \nu \int_0^t r_\nu(\zeta)\, \mathrm{d}\zeta = k_0 r_\nu(t)+ (k_1 \ast r_\nu)(t)=(\mathrm{d}k \ast r_\nu)(t), \quad t \geq 0,
			\end{align*}
			which implies that $s_\nu$ is continuous on $\mathbb R_+$ and differentiable on $(0,\infty)$.
			\item[(iii)] For $\nu\geq0$ and $t\geq 0$, it holds
			\begin{align*}
				0\leq s_\nu(t) \leq \frac{1}{1+\nu (1\ast l)(t)}.
			\end{align*}
			\item[(iv)] For $\nu\geq0$ and $t>0$, if $l$ is nonincreasing, then
			\begin{align*}
				0\leq r_\nu(t) \leq \frac{ l(t)}{1+\nu (1\ast l)(t)}.
			\end{align*}
		\end{itemize}
		\begin{proof}
			The proof of (i) can be found in \cite[Lemma A.1]{X. Huang 1}. For part (ii), the case
			$\nu \geq 0$ follows from
			\cite[Proposition 2.1]{X. Huang};
			the result can be extended to
			$\nu \in \mathbb C$. As for part (iii), since $s_\nu(\cdot)$ is nonincreasing, the desired estimate follows from applying the bound $(l \ast s_\nu)(t)\leq s_\nu(t)(1\ast l)(t)$ to \eqref{2.3}.
			The proof of conclusion (iv) adopts an argument similar to the one presented in \cite[Lemma 5.4]{J.C. Pozo}, hence we omit the detailed derivation for brevity.
		\end{proof}
	\end{proposition}
	
	In the sequel, we shall frequently make use of Proposition \ref{prop 2.1}. For the sake of brevity, we will not always explicitly refer to it when applying its assertions.
	
	\section{Novel solution representation under hereditary phase-space lifting}\label{sec 3}
	In this section, we construct a hereditary phase-lifting framework for the stochastic time-nonlocal telegraph equation and derive a novel solution representation. Consider the following problem:
	\begin{align}\label{3.1}
		\begin{cases}
			\partial_t (a \ast v) = w,  \\
			\partial_t (a \ast w) = \frac{1}{\gamma} \left[\Delta v - (\xi+1)w
			+\Psi(v)
			+  \Phi(v) \frac{\mathrm{d}w}{\mathrm{d}t} \right],
		\end{cases}
	\end{align}
	subject to the initial conditions
	\begin{align*}
		(a \ast v(\cdot,x))(0) = u_0(x) , \ (a \ast w(\cdot,x))(0) = u_1(x).
	\end{align*}
	We introduce the linear operator $\mathcal {A}_{\gamma,\xi}:D(\mathcal{A}_{\gamma,\xi}) = \mathcal H^{\delta+1} \to \mathcal H^{\delta}$ by
	\begin{align*}
		\mathcal A_{\gamma,\xi}(v,w) = ( w, \gamma^{-1} \Delta v - \gamma^{-1} (1+\xi)w )
	\end{align*}
	with $\xi \geq 0$. We also define the projection operators $\mathcal I_1:\mathcal H^\delta\to H^\delta$ and $\mathcal I_2:\mathcal H^\delta\to H^{\delta-1}$ via
	\begin{align*}
		\mathcal{I}_1(v,w) = v, \ \mathcal{I}_2(v,w) = w.
	\end{align*}
	Finally, we define the map $J_\gamma: H^\delta \to \mathcal H^{\delta}$ by
	\begin{align*}
		J_\gamma v = (0, \gamma^{-1}v ).
	\end{align*}
	
	Formally, we define a ``phase point'' by
	$Y(t) = (v(t), \partial_t (a \ast  v)(t) )$, where the first component describes the voltage at position $x$ at time $t$, while the second component accounts for the history-dependent time-varying velocity of the voltage at $(t,x)$.
	Then \eqref{3.1} can be rewritten as
	\begin{align}\label{3.2}
		\mathrm{d}(a \ast Y)(t) = \mathcal A_{\gamma,\xi} Y(t)\mathrm{d}t + J_\gamma \Psi(\mathcal I_1 Y(t))\mathrm{d}t + J_\gamma \Phi(\mathcal I_1 Y(t)) \mathrm{d}W(t)
	\end{align}
	with the initial condition $(a \ast Y)(0)= (u_0,u_1):=X(0)$. We further assume that for each $T>0$, $\Phi(v)\in \mathcal X^{2,2}([0,T];\mathcal L_2(H, H))$, so that $J_\gamma \Phi(v) \in \mathcal{X}^{2,2}([0,T]; \mathcal L_2(\mathcal H,\mathcal H ))$ for each $T>0$ as well. Convolving the first-order system \eqref{3.2} with
	$b$ and using the identity $a \ast b \equiv 1$, we obtain
	\begin{align}\label{3.3}
		\begin{aligned}
			Y(t) &= b(t)X(0) + (b \ast \mathcal A_{\gamma,\xi} Y)(t) + (b \ast  J_\gamma \Psi(\mathcal I_1 Y))(t) \\
			&\quad + (b \ast  J_\gamma \Phi(\mathcal I_1 Y) \mathrm{d}W )(t).
		\end{aligned}
	\end{align}
	Note that, since $b \in L_{loc}^2(\mathbb R_+)$  is deterministic, the stochastic convolution appearing at the end of the above identity is well-defined.
	
	Clearly, $\mathcal A_{\gamma,\xi}$ generates a $C_0$-semigroup on $\mathcal H^\delta$. Consider the Volterra equation
	\begin{align}\label{3.4}
		(v(t),w(t)) = (f_1(t),f_2(t)) + \int_0^t b(t - \zeta)\mathcal A_{\gamma,\xi} (v(\zeta),w(\zeta))\, \mathrm{d}\zeta, \quad t \geq 0,
	\end{align}
	where $b$ is a completely positive function. By \cite[Theorem 4.2]{J. Pruss}, \eqref{3.4} admits a resolvent
	$\{\mathcal S_{\gamma,\xi}(t)\}_{t\geq 0}$ on $\mathcal H^\delta$. According to
	\cite[Definition 1.3]{J. Pruss}, a family $\{S_{\gamma,\xi}(t)\}_{t\geq0} \subset \mathcal{L}(\mathcal H^{\delta})$ is called a resolvent for \eqref{3.4} if:
	\begin{itemize}
		\item[(i)]  $S_{\gamma,\xi}(t)$ is strongly continuous on $\mathbb{R}_+$ and $S_{\gamma,\xi}(0)=I$, where $I:\mathcal H^{\delta} \to \mathcal H^{\delta}$ is given by $I(v,w) =(v,w)$;
		\item[(ii)]$S_{\gamma,\xi}(t)$ commutes with $\mathcal A_{\gamma,\xi}$, i.e., $\mathcal S_{\gamma,\xi}(t)\mathcal{D}(\mathcal A_{\gamma,\xi}) \subset \mathcal{D}(\mathcal A_{\gamma,\xi})$ and $\mathcal A_{\gamma,\xi} S_{\gamma,\xi}(t)(v,w) = S_{\gamma,\xi}(t)\mathcal A_{\gamma,\xi}(v,w)$ for all $(v,w) \in \mathcal{D}(\mathcal A_{\gamma,\xi})$ and $t \geq 0$;
		\item[(iii)] the resolvent equation
		\begin{align*}
			S_{\gamma,\xi}(t)(v,w) = (v,w) + \int_0^t b(t - \zeta)\mathcal A_{\gamma,\xi} S_{\gamma,\xi}(\zeta)(v,w)\, \mathrm{d}\zeta
		\end{align*}
		holds for for all $(v,w) \in \mathcal{D}(\mathcal A_{\gamma,\xi})$ and $t \geq 0$,
	\end{itemize}
	Then, applying the variation of parameters formula to \eqref{3.4}, we obtain
	\begin{align}\label{3.5}
		\begin{aligned}
			Y(t) &= \frac{\mathrm{d}}{\mathrm{d}t}(\mathcal S_{\gamma,\xi} \ast(b X(0) + b \ast J_\gamma \Psi(\cdot, \mathcal I_1 Y)  +b \ast J_\gamma \Phi(\mathcal I_1 Y) \mathrm{d}W )(t).
		\end{aligned}
	\end{align}
	So far, we have derived a representation of solutions to \eqref{1.2}. However, this differential form representation is rather difficult to handle in problems such as moment estimation. Therefore, it is necessary to further transform the above solution representation so as to eliminate its differential structure.
	An interesting idea is as follows. suppose there exists a family
	there exists a family of $\mathcal R_{\gamma,\xi} \in L_{loc}^2(\mathbb R_+, \mathcal L(\mathcal H^\delta))$ such that for a.a $t \geq 0$, $\mathcal R_{\gamma,\xi}(t)$ leaves $\mathcal D(\mathcal A_{\gamma,\xi})$ invariant,
	commutes with $\mathcal A_{\gamma,\xi}$ on $\mathcal D(\mathcal A_{\gamma,\xi})$, and satisfies for each $(v,w)$ on $\mathcal D(\mathcal A_{\gamma,\xi})$,
	\begin{align}\label{3.6}
		\mathcal R_{\gamma,\xi}(t)(v,w)= b(t)(v,w)+ \int_0^t b(t-\zeta) \mathcal A_{\gamma,\xi} \mathcal R_{\gamma,\xi}(\zeta)(v,w)\, \mathrm{d}\zeta.
	\end{align}
	Then convolving this with $a$ yields for all $(v,w) \in \mathcal{D}(\mathcal A_{\gamma,\xi})$ and $t \geq 0$,
	\begin{align*}
		(a \ast \mathcal R_{\gamma,\xi})(t)(v,w)= (v,w)+ \int_0^t b(t-\zeta) \mathcal A_{\gamma,\xi} (a \ast \mathcal R_{\gamma,\xi})(\zeta)(v,w)\, \mathrm{d}\zeta.
	\end{align*}
	Hence $(a \ast \mathcal R_{\gamma,\xi})(t) = \mathcal S_{\gamma,\xi}(t)$. Substituting this into \eqref{3.5} yields
	\begin{align}\label{3.7}
		\begin{aligned}
			Y(t) &= \frac{d}{\mathrm{d}t}( 1  \ast(\mathcal R_{\gamma,\xi} X(0) + \mathcal R_{\gamma,\xi} \ast J_\gamma \Psi(\mathcal I_1 Y)+\mathcal R_{\gamma,\xi} \ast J_\gamma \Phi(\mathcal I_1 Y) \mathrm{d}W)(t) \\
			&= \mathcal R_{\gamma,\xi}(t)X(0) + \int_0^t \mathcal R_{\gamma,\xi}(t-\zeta) J_\gamma \Psi(\mathcal I_1 Y(\zeta))\, \mathrm{d}\zeta \\
			&\quad+ \int_0^t \mathcal R_{\gamma,\xi}(t-\zeta) J_\gamma \Phi(\mathcal I_1 Y(\zeta))\, \mathrm{d}W(\zeta), \quad \text{ ~a.a.~ } t \in \mathbb R_+.
		\end{aligned}
	\end{align}
	Here, in view of $J_\gamma \Phi(\mathcal I_1) \in \mathcal{X}^{2,2}([0, T]; \mathcal L_2(\mathcal H,\mathcal H ))$ for each $T>0$, together with Lemma \ref{lem 2.1} and the identity $a \ast b \equiv 1$, we have
	\begin{align*}
		&\quad \frac{\mathrm{d}}{\mathrm{d}t}(\mathcal S_{\gamma,\xi} \ast (b \ast J_\gamma \Phi(\mathcal I_1 Y ) \mathrm{d}W )  )(t)\\
		&=\frac{\mathrm{d}}{\mathrm{d}t} (\mathcal  (S_{\gamma,\xi} \ast b)  \ast J_\gamma \Phi(\mathcal I_1 Y) \mathrm{d}W )(t)\\
		&=\frac{\mathrm{d}}{\mathrm{d}t} ( (a \ast b \ast \mathcal R_{\gamma,\xi}) \ast J_\gamma \Phi(\mathcal I_1 Y ) \mathrm{d}W )(t)\\
		&=\frac{\mathrm{d}}{\mathrm{d}t} ( (1 \ast \mathcal R_{\gamma,\xi}) \ast J_\gamma \Phi(\mathcal I_1 Y) \mathrm{d}W )(t)\\
		&= \frac{\mathrm{d}}{\mathrm{d}t} ( 1 \ast (\mathcal R_{\gamma,\xi} \ast J_\gamma \Phi(\mathcal I_1 Y ) \mathrm{d}W) )(t)\\
		&=(\mathcal R_{\gamma,\xi} \ast J_\gamma \Phi(\mathcal I_1 Y ) \mathrm{d}W)(t) \text{ ~for} t>0.
	\end{align*}
	In this way, we have derived a mild solution representation for the time-nonlocal telegraph equation. Furthermore, from $\mathcal R_{\gamma,\xi}(t) = (b \ast \mathcal S_{\gamma,\xi})'(t)$ and the uniqueness of $S_{\gamma,\xi}(t)$, it follows that if a solution to \eqref{3.6} exists, it must be unique.
	
	We now prove the existence of $\mathcal R_{\gamma,\xi}(t)$ via the solution of the following ODEs. For notational convenience, for each $j \in \mathbb N_+$, $\gamma>0$,  and $\xi \geq 0$, denote
	\begin{align*}
		&Z_1=Z_{1,\gamma,j,\xi} = \frac{1+\xi-\sqrt{(1+\xi)^2-4\gamma\lambda_j}}{2\gamma},\\ &Z_2=Z_{2,\gamma,j,\xi}=\frac{1+\xi+\sqrt{(1+\xi)^2-4\gamma\lambda_j}}{2\gamma}.
	\end{align*}
	
	\begin{lemma}\label{lem 3.1}
		Assume that $(a,b)\in (\mathcal{PC}_\varepsilon)$. Let $j\in\mathbb N_+$, $\gamma>0$ and $\xi \geq 0$. Consider the following nonlocal ODEs
		\begin{align}\label{3.8}
			\begin{cases}
				\gamma \partial_t \left(a \ast (\partial_t a \ast h)\right)(t) + (1+\xi)\partial_t(a \ast h)(t) + \lambda_j h(t) = m(t), \quad t>0,\\
				(a \ast h)(0)= h_0, \ \left(a \ast (\partial_t a \ast h)\right)(0)=h_1,
			\end{cases}
		\end{align}
		where $m\in L_{loc}^1(\mathbb R_+)$ is a given function and $h_0, h_1\in \mathbb R$. The following assertions hold.
		\begin{itemize}
			\item[(i)] If $1-4\lambda_j\gamma\neq 0$, then its solution is given by
			\begin{align}\label{3.9}
				\begin{aligned}
					h(t) &= (\frac{1}{2} +\frac{1+\xi}{2\sqrt{(1+\xi)^2-4\gamma\lambda_j} } )r_{Z_1}(t) h_0+ (\frac{1}{2} -\frac{1+\xi}{2\sqrt{(1+\xi)^2-4\gamma\lambda_j} } ) r_{Z_2}(t) h_0\\
					&\quad+ \frac{\gamma h_1}{\sqrt{(1+\xi)^2-4\gamma\lambda_j}}(r_{Z_1}(t) -r_{Z_2}(t) )\\
					&\quad + \frac{1}{\sqrt{(1+\xi)^2-4\gamma\lambda_j}}\int_{0}^t ( r_{Z_1}(t-\zeta) - r_{Z_2}(t-\zeta))m(\zeta) \, \mathrm{d}\zeta.
				\end{aligned}
			\end{align}
			\item[(ii)] If $1-4\lambda_j\gamma=0$, then its solution is given by
			\begin{align}\label{3.10}
				\begin{aligned}
					h(t)&=\left[r_{\frac{1+\xi}{2\gamma}}(t) +\frac{1+\xi}{2\gamma}(r_{\frac{1+\xi}{2\gamma}}\ast r_{\frac{1+\xi}{2\gamma}})(t) \right]h_0 +(r_{\frac{1+\xi}{2\gamma}}\ast r_{\frac{1+\xi}{2\gamma}})(t)h_1\\
					&\quad+ \gamma^{-1} \int_0^t (r_{\frac{1+\xi}{2\gamma}}\ast r_{\frac{1+\xi}{2\gamma}})(t-\zeta)m(\zeta) \, \mathrm{d}\zeta.
				\end{aligned}
			\end{align}
		\end{itemize}
		\begin{proof}
			Using identities $a \ast b \equiv 1$ and $(a +\frac{1+\xi}{\gamma})\ast r_{\frac{1+\xi}{\gamma}}\equiv1$ we get
			\begin{align*}
				&\quad \frac{1}{\gamma}b \ast r_{\frac{1+\xi}{\gamma}} \ast(\gamma \partial_t (a \ast \partial_t(a \ast h)) +(1+\xi)\partial_t (a \ast h) )\\
				&= \partial_t (b \ast r_{\frac{1+\xi}{\gamma}} \ast a \ast \partial_t (a \ast h) )  +\frac{1+\xi}{\gamma}\partial_t (1 \ast r_{\frac{1+\xi}{\gamma} } \ast h)\\
				&\quad- \frac{1+\xi}{\gamma}(b \ast r_{\frac{1+\xi}{\gamma}})(t)h_0 -(b \ast r_{\frac{1+\xi}{\gamma}})(t)h_1 \\
				&= \partial_t (r_{\frac{1+\xi}{\gamma}} \ast a \ast h )  +\frac{1+\xi}{\gamma}\partial_t (1 \ast r_{\frac{1+\xi}{\gamma} } \ast h)\\
				&\quad- r_{\frac{1+\xi}{\gamma}}(t)h_0- \frac{1+\xi}{\gamma}(b \ast r_{\frac{1+\xi}{\gamma}})(t)h_0 -(b \ast r_{\frac{1+\xi}{\gamma}})(t)h_1 \\
				&=\partial_t r_{\frac{1+\xi}{\gamma}} \ast (a + \frac{1+\xi}{\gamma})\ast h -b(t)h_0
				-(b \ast r_{\frac{1}{\gamma}})(t)h_1 \\
				&=h(t) -b(t)h_0 - (b \ast r_{\frac{1+\xi}{\gamma}})(t)h_1.
			\end{align*}
			Hence, \eqref{3.8} can be transformed into
			\begin{align}\label{3.11}
				\begin{cases}
					h(t) + \frac{\lambda_j}{\gamma} (b \ast r_{\frac{1+\xi}{\gamma} }\ast h)(t)  =b(t) h_0 + (b \ast r_{\frac{1+\xi}{\gamma} })(t)h_1+ \frac{1}{\gamma}(b \ast r_{\frac{1+\xi}{\gamma} }\ast m)(t),\\
					(a \ast h)(0)= h_0, \ \left(a \ast (\partial_t a \ast h)\right) (0)=h_1.
				\end{cases}
			\end{align}
			We next apply the Laplace transform to obtain
			\begin{align*}
				\widehat{h}(\lambda) = \frac{\widehat{b}(\lambda)h_0 + \widehat{b}(\lambda) \widehat{r}_{\frac{1+\xi}{\gamma} }(\lambda)h_1 + \gamma^{-1}\widehat{b}(\lambda) \widehat{r}_{\frac{1+\xi}{\gamma} }(\lambda)\widehat{m}(\lambda) }{1+\gamma^{-1}\lambda_j \widehat{b}(\lambda)\widehat{r}_{\frac{1+\xi}{\gamma}}(\lambda)}.
			\end{align*}
			Substituting $\widehat{b}(\lambda)= [\lambda\widehat{a}(\lambda)]^{-1}$, $\widehat{r}_{\frac{1+\xi}{\gamma} }(\lambda) = \widehat{b}(\lambda)(1+ \gamma^{-1}(1+\xi) \widehat{b}(\lambda) ) $
			into the above and simplifying yields
			\begin{align}\label{3.12}
				\begin{aligned}
					\widehat{h}(\lambda) &= \frac{\widehat{b}(\lambda)h_0 + \widehat{b}(\lambda)^2(1+ \gamma^{-1}(1+\xi)\widehat{b}(\lambda) )^{-1}h_1 + \gamma^{-1}\widehat{b}(\lambda)^2(1+ \gamma^{-1}(1+\xi)\widehat{b}(\lambda) )^{-1} \widehat{m}(\lambda) }{1+ \gamma^{-1}\lambda_j \widehat{b}(\lambda)^2(1+ \gamma^{-1}(1+\xi)\widehat{b}(\lambda) )^{-1}}\\
					&=\frac{(\lambda\widehat{a}(\lambda)+\gamma^{-1}(1+\xi))h_0 + h_1 +\gamma^{-1}\widehat{m}(\lambda) }{\lambda^2\widehat{a}(\lambda)^2 + \gamma^{-1}(1+\xi) \lambda\widehat{a}(\lambda) +\gamma^{-1}\lambda_j }.
				\end{aligned}
			\end{align}
			Note that
			\begin{align*}
				\lambda^2\widehat{a}(\lambda)^2 + \gamma^{-1}(1+\xi) \lambda\widehat{a}(\lambda) +\gamma^{-1} \lambda_j = (\lambda \widehat{a}(\lambda) +Z_1 )(\lambda \widehat{a}(\lambda)+  Z_2).
			\end{align*}
			If $(1+\xi)^2-4\gamma\lambda_j \neq 0$, then simplifying \eqref{3.12} yields
			\begin{align*}
				\widehat{h}(\lambda)&=(\frac{1}{2} +\frac{1+\xi}{2\sqrt{(1+\xi)^2-4\gamma\lambda_j} } )\frac{h_0}{\lambda\widehat{a}(\lambda)+ Z_1} + (\frac{1}{2} -\frac{1+\xi}{2\sqrt{(1+\xi)^2-4\gamma\lambda_j} } )\frac{h_0}{\lambda\widehat{a}(\lambda)+ Z_2}\\
				&\quad+ \frac{\gamma}{\sqrt{(1+\xi)^2-4\gamma\lambda_j}}( \frac{h_1+\gamma^{-1}\widehat{m}(\lambda)}{\lambda\widehat{a}(\lambda)+Z_1 } - \frac{h_1+\gamma^{-1}\widehat{m}(\lambda)}{\lambda\widehat{a}(\lambda)+Z_2 }).
			\end{align*}
			Employing $\widehat{s}_{Z_i}(\lambda) =\widehat{a}(\lambda)(Z_i+ \lambda\widehat{a}(\lambda))^{-1}$ and $\widehat{r}_{Z_i}(\lambda) =(Z_i+ \lambda\widehat{a}(\lambda))^{-1}$ with $i=1,2$ gives
			\begin{align*}
				\widehat{h}(\lambda) &= (\frac{1}{2} +\frac{1+\xi}{2\sqrt{(1+\xi)^2-4\gamma\lambda_j} } )\widehat{r}_{Z_1}(\lambda) h_0
				+ (\frac{1}{2} -\frac{1+\xi}{2\sqrt{(1+\xi)^2-4\gamma\lambda_j} } ) \widehat{r}_{Z_2}(\lambda) h_0\\
				&\quad+ \frac{\gamma h_1}{\sqrt{(1+\xi)^2-4\gamma\lambda_j}}(\widehat{r}_{Z_1}(\lambda) -\widehat{r}_{Z_2}(\lambda) ) + \frac{\widehat{m}(\lambda)}{\sqrt{(1+\xi)^2-4\gamma\lambda_j}}( \widehat{r}_{Z_1}(\lambda) -\widehat{r}_{Z_2}(\lambda)).
			\end{align*}
			By the uniqueness of the Laplace transform, \eqref{3.9} follows.
			
			If $(1+\xi)^2-4\gamma\lambda_j =0$, then simplifying \eqref{3.12} yields
			\begin{align*}
				\widehat{h}(\lambda) = \frac{h_0}{\lambda \widehat{a}(\lambda) + \frac{1+\xi}{2\gamma} } + \frac{\frac{1+\xi}{2\gamma}h_0}{\left[ \lambda \widehat{a}(\lambda) + \frac{1+\xi}{2\gamma} \right]^2 } + \frac{h_1}{\left[ \lambda \widehat{a}(\lambda) + \frac{1+\xi}{2\gamma} \right]^2 }+ \frac{\gamma^{-1} \widehat{m}(\lambda)} {\left[ \lambda \widehat{a}(\lambda) + \frac{1+\xi}{2\gamma} \right]^2 }.
			\end{align*}
			Employing $\widehat{r}_{\frac{1+\xi}{2\gamma}}(\lambda) = (\lambda \widehat{a}(\lambda) + \frac{1+\xi}{2\gamma})^{-1}$ and the uniqueness of Laplace transform leads to  \eqref{3.10}.
		\end{proof}
	\end{lemma}
	
	For convenience, we denote by $H_{\gamma,\lambda_j,\xi}(t;h_0,h_1,m)$ the solution $h$ of \eqref{3.8}.
	
	\begin{remark}\label{rem 3.1}
		If $(1+\xi)^2-4\gamma\lambda_j\neq 0$, by Proposition \ref{prop 2.1}(ii) we have
		\begin{align}\label{3.13}
			(a \ast r_{Z_1})'(t) =-Z_1r_{Z_1}(t),\  (a \ast r_{Z_2})'(t)=-Z_2r_{Z_2}(t), \quad t> 0.
		\end{align}
		Substituting \eqref{3.13} into \eqref{3.9}, it is straightforward to verify that
		\begin{align}\label{3.14}
			\begin{aligned}
				&\quad (a \ast H_{\gamma,\lambda_j,\xi}(\cdot;h_0,h_1,m))'(t)\\
				&= -\frac{\lambda_j h_0}{\sqrt{(1+\xi)^2-4\gamma\lambda_j} } (r_{Z_1}(t)-r_{Z_2}(t) )
				- \frac{\gamma h_1}{\sqrt{(1+\xi)^2-4\gamma\lambda_j}}(Z_1r_{Z_1}(t) -Z_2r_{Z_2}(t) )\\
				&\quad - \frac{1}{\sqrt{(1+\xi)^2-4\gamma\lambda_j}}\int_{0}^t ( Z_1r_{Z_1}(t-\zeta) -Z_2 r_{Z_2}(t-\zeta))m(\zeta) \, \mathrm{d}\zeta.
			\end{aligned}
		\end{align}
		If $(1+\xi)^2-4\gamma\lambda_j= 0$, by Proposition \ref{prop 2.1}(ii) we have $(a\ast r_{\frac{1+\xi}{2\gamma} })'(t) =-\frac{1+\xi}{2\gamma} r_{\frac{1+\xi}{2\gamma}}(t)$. A direct computation gives
		\begin{align}\label{3.15}
			\begin{aligned}
				&\quad (a \ast  H_{\gamma,\lambda_j,
					\xi}(\cdot;h_0,h_1,m))'(t)\\
				&=-\frac{(1+\xi)^2}{4\gamma^2}(r_{\frac{1+\xi}{2\gamma}}\ast r_{\frac{1+\xi}{2\gamma}})(t)h_0
				+ \left[ r_{\frac{1+\xi}{2\gamma} }(t)-\frac{1+\xi}{2\gamma}(r_{\frac{1+\xi}{2\gamma}}\ast r_{\frac{1+\xi}{2\gamma}})(t) \right]h_1\\
				&\quad - \frac{1}{\gamma} \int_0^t \left[ r_{\frac{1+\xi}{2\gamma} }(t-\zeta)-\frac{1+\xi}{2\gamma}(r_{\frac{1+\xi}{2\gamma}}\ast r_{\frac{1+\xi}{2\gamma}})(t-\zeta) \right]m(\zeta) \, \mathrm{d}\zeta.
			\end{aligned}
		\end{align}
	\end{remark}
	
	We now prove the existence of $\mathcal R_{\gamma,\xi}(t)$ and derive its explicit formula. Assume that
	\begin{align*}
		&v(t,\cdot) = \sum_{j=1}^\infty v_j(t) e_j, \
		u_0 = \sum_{j=1}^\infty u_{0,j} e_j, \  u_1 = \sum_{j=1}^\infty u_{1,j} e_j,\ \Psi(v) = \sum_{j=1}^\infty \psi_j(t) e_j.
	\end{align*}
	Note that
	\begin{align*}
		\int_0^t \Phi(v(\zeta)) \, \mathrm{d}W(\zeta) &= \sum_{i=1}^{\infty} \int_0^t \Phi(v(\zeta)) e_i \mathrm{d}\beta_i(\zeta) = \sum_{i=1}^{\infty} \int_0^t \sum_{j=1}^{\infty} \langle \Phi(v(\zeta)) e_i, e_j \rangle e_j\,  \mathrm{d}\beta_i(\zeta)\\
		&= \sum_{j=1}^{\infty} e_j  \int_0^t  \sum_{i=1}^{\infty} \langle \Phi(v(\zeta)) e_i, e_j  \rangle \,  \mathrm{d}\beta_i(\zeta)\\
		&=\sum_{j=1}^{\infty} e_j  \int_0^t  \sum_{i=1}^{\infty} \langle \Phi(v(\zeta)) e_j, e_i  \rangle \,  \mathrm{d}\beta_i(\zeta) \\
		&:= \sum_{j=1}^{\infty} e_j \int_0^t  \varLambda_j(\zeta) \,  \mathrm{d}W(\zeta).
	\end{align*}
	Here the interchange of the integral and the summation is justified by Fubini's theorem, together with the assumption that $\int_0^T \mathbb{E} \left[ \|\Phi(v(\zeta))\|_{\mathcal L_2(H,H)}^2 \right] dt < \infty$. Then \eqref{3.1} can be equivalently written as
	\begin{align*}
		\begin{cases}
			\gamma \partial_t \left(a \ast (\partial_t a \ast v_j)\right)(t) + \partial_t(a \ast v_j)(t) + (\lambda_j+\xi) v_{j}(t)
			=\psi_j(t) + \varLambda_j(t) \mathrm{d}W(t) ,\\
			(a \ast v_j)(0)= u_{0,j}, \  \left(a \ast (\partial_t a \ast v_j) \right)(0)=u_{1,j},
		\end{cases}
	\end{align*}
	where $j \in \mathbb N_+$. By Lemma \ref{lem 3.1} and Remark \ref{rem 3.1} we have
	\begin{align*}
		v_j(t) = H_{\gamma, \lambda_j,\xi}(t; v_{0,j}, v_{1,j},\psi_j)
		+\gamma^{-1}\int_0^t  H_{\gamma,\lambda_j,\xi}(t-\zeta;0,1,0)
		\varLambda_j(\zeta) \, \mathrm{d}W(\zeta)
	\end{align*}
	and
	\begin{align*}
		w_j(t)=(a\ast v_j)'(t) &= (a \ast H_{\gamma,\lambda_j,\xi}(\cdot;v_{0,j}, v_{1,j}, \psi_j))'(t)\\
		&\quad + \gamma^{-1}\int_0^t H_{\gamma,\lambda_j,\xi}(t-\zeta;0,1,0)\varLambda_j(\zeta) \, \mathrm{d}W(\zeta).
	\end{align*}
	
	Set $v_j = \langle v, e_j \rangle$, $w_j = \langle w, e_j \rangle$. Define $\mathcal R_{\gamma,\xi}(t): \mathcal H^\delta \to \mathcal H^\delta$ by
	\begin{align}\label{3.16}
		\mathcal R_{\gamma,\xi}(t)(v,w): =\sum_{j=1}^\infty   ( H_{\gamma,\lambda_j,\xi}(t; v_j, w_j, 0) e_j , (a \ast H_{\gamma,\lambda_j,\xi}(\cdot;v_j,w_j,0))'(t) e_j).
	\end{align}
	In particular, we denote $\mathcal R_{\gamma,0}(t) = \mathcal R_\gamma(t)$.
	Using \eqref{3.9}, \eqref{3.10}, \eqref{3.14}, and \eqref{3.15}, one checks that the operator defined by \eqref{3.15} satisfies: (i)$\mathcal R_{\gamma,\xi} \in L_{loc}^2(\mathbb R_+, \mathcal L(\mathcal H^\delta))$ such that for a.a $t \geq 0$;(ii) $\mathcal R_{\gamma,\xi}(t)$ leaves $\mathcal D(\mathcal A_{\gamma,\xi})$ invariant,
	commutes with $\mathcal A_{\gamma,\xi}$ on $\mathcal D(\mathcal A_{\gamma,\xi})$;(iii) for each $(v,w)$ on $\mathcal D(\mathcal A_{\gamma,\xi})$, \eqref{3.6} holds. Thus the mild solution representation \eqref{3.7} is valid.
	
	We now introduce the notion of weak mild solutions for problem \eqref{1.2}.
	\begin{definition}\label{def 3.1}
		Assume that $(a,b)\in (\mathcal{PC}_\varepsilon)$ and $\gamma>0$. A weak mild solution to \eqref{1.2}
		with initial pair $(u_0, u_1) \in \mathcal{H}^1$ as a collectionn $ (\Omega, \mathcal{F}, (\mathcal{F}_t), \mathbb{P}, W, Y)$, subject to the following conditions:
		\begin{itemize}
			\item[(i)] $(\Omega, \mathcal{F}, (\mathcal{F}_t), \mathbb{P})$ is some filtered probability space;
			\item[(ii)] $W$ is a cylindrical Wiener process with respect to this filtration;
			\item[(iii)] $Y$ is a progressively measurable process taking values in $L_{loc}^2(\mathbb R_+;\mathcal H^\delta)$ for some $\delta \in [0,\frac{1}{2})$;
			\item[(iv)] $\mathbb{P}$-a.s, the mild solution $Y(t)$ satisfies
			\begin{align}\label{3.17}
				\begin{aligned}
					Y(t) &= \mathcal R_{\gamma}(t)(u_0,u_1) + \int_0^t \mathcal R_{\gamma}(t-\zeta) J_\gamma \Psi(\mathcal I_1 Y(\zeta))\, \mathrm{d}\zeta \\
					&\quad+ \int_0^t \mathcal R_{\gamma}(t-\zeta) J_\gamma \Phi(\mathcal I_1 Y(\zeta)) \, \mathrm{d}W(\zeta), \text{ ~a.a.~ } t \in \mathbb R_+.
				\end{aligned}
			\end{align}
		\end{itemize}
	\end{definition}
	
	\section{Estimates on the nonlocal telegraph equations resolvent $\mathcal R_{\gamma,\xi}(t) $}\label{sec 4}
	In this section, we establish operator norm estimates for $\mathcal R_{\gamma,\xi}(t)$. For each $\gamma>0$ and $\xi \geq 0$, we define $g_{\gamma,\lambda_j,\xi}: \mathbb R_+ \to \mathbb R$ by
	\begin{align*}
		g_{\gamma,j,\xi}(\theta) = \begin{cases}
			\frac{\gamma}{\sqrt{(1+\xi)^2-4\gamma\lambda_j}}(e^{-Z_1\theta }- e^{-Z_2\theta }), \text{ if } (1+\xi)^2-4\gamma\lambda_j>0,\\
			\theta e^{-\frac{1+\xi}{2\gamma}\theta }, \text{ if } (1+\xi)^2-4\gamma\lambda_j=0,\\
			\frac{2\gamma}{\sqrt{4\gamma\lambda_j-(1+\xi)^2} }e^{-\frac{1+\xi}{2\gamma}\theta } \sin(\frac{\sqrt{4\gamma\lambda_j-(1+\xi)^2 } }{2\gamma}\theta ), \text{ if } (1+\xi)^2-4\gamma\lambda_j<0.
		\end{cases}
	\end{align*}
	Differentiating $g_{\gamma,\lambda_j,\xi}(\theta)$ with respect to $\theta$ yields
	\begin{align*}
		g'_{\gamma,\lambda_j,\xi}(\theta) = \begin{cases}
			\frac{\gamma}{\sqrt{(1+\xi)^2-4\gamma\lambda_j}}(-Z_1e^{-Z_1\theta }+ Z_2e^{-Z_2\theta }), \text{ if } (1+\xi)^2-4\gamma\lambda_j>0,\\
			e^{-\frac{1+\xi}{2\gamma}\theta }(1-\frac{1+\xi}{2\gamma}\theta), \text{ if } (1+\xi)^2-4\gamma\lambda_j=0,\\
			e^{-\frac{1+\xi}{2\gamma}\theta } \left[\cos(\frac{\sqrt{4\gamma\lambda_j-(1+\xi)^2 } }{2\gamma}\theta ) \right. \\
			\quad \quad \quad \quad \ \left.- \frac{1+\xi}{\sqrt{4\gamma\lambda_j -(1+\xi)^2}}\sin(\frac{\sqrt{4\gamma\lambda_j-(1+\xi)^2 } }{2\gamma}\theta )  \right], \text{ if } (1+\xi)^2-4\gamma\lambda_j<0,
		\end{cases}
	\end{align*}
	In what follows, we establish new power-type estimates for $g_{\gamma,\lambda_j,\xi}(\theta)$ and $g'_{\gamma,\lambda_j,\xi}(\theta)$, see Appendix \ref{app A} for the proof.
	\begin{proposition}\label{prop 4.1}
		For any $\theta \geq 0$, we have
		\begin{align*}
			|g_{\gamma,\lambda_j,\xi}(\theta)|\leq \begin{cases}
				6 (1+\xi)^{-\alpha}\gamma^{\frac{1+\alpha}{2} } \lambda_j^{-\frac{1-\alpha}{2}}e^{-\frac{\lambda_j}{2(1+\xi)}\theta}, \text{ ~if~ } (1+\xi)^2-4\gamma\lambda_j > 0, \ \alpha \in [-1,1].\\
				3(1+\xi)^{-\alpha}\gamma^{\frac{1+\alpha}{2} } \lambda_j^{-\frac{1-\alpha}{2}}e^{-\frac{1+\xi}{4\gamma}\theta}, \text{ ~if~ } (1+\xi)^2-4\gamma\lambda_j \leq 0, \ \alpha \in [0,1].
			\end{cases}
		\end{align*}
	\end{proposition}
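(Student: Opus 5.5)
The bound is elementary, and I plan to prove it by a two-sided estimate followed by interpolation. Write $s:=1+\xi$ and $D:=s^2-4\gamma\lambda_j$. In each regime I will bound $|g_{\gamma,\lambda_j,\xi}(\theta)|$ by the minimum of two quantities $A$ and $B$, each carrying the required exponential factor. Then I use $\min(A,B)\le A^{\frac{1+\alpha}{2}}B^{\frac{1-\alpha}{2}}$ to obtain the whole range of $\alpha$. The two endpoint bounds I aim for are
\begin{align*}
A\sim \gamma s^{-1}\quad(\alpha=1),\qquad B\sim s\lambda_j^{-1}\quad(\alpha=-1),\qquad B'\sim\gamma^{\frac12}\lambda_j^{-\frac12}\quad(\alpha=0).
\end{align*}
Indeed $A^{\frac{1+\alpha}{2}}B^{\frac{1-\alpha}{2}}=s^{-\alpha}\gamma^{\frac{1+\alpha}{2}}\lambda_j^{-\frac{1-\alpha}{2}}$, and also $A^{\alpha}B'^{1-\alpha}=s^{-\alpha}\gamma^{\frac{1+\alpha}{2}}\lambda_j^{-\frac{1-\alpha}{2}}$.

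\emph{Overdamped case $D>0$.} Since $Z_1=\frac{2\lambda_j}{s+\sqrt D}\ge \lambda_j/s$ and $Z_2-Z_1=\sqrt D/\gamma$, I write
\begin{align*}
g=\tfrac{\gamma}{\sqrt D}e^{-Z_1\theta}\bigl(1-e^{-(Z_2-Z_1)\theta}\bigr).
\end{align*}
Using $1-e^{-x}\le\min(1,x)$ this gives $|g|\le \min(\gamma/\sqrt D,\theta)\,e^{-\lambda_j\theta/s}$.

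The $\theta$-branch, combined with $\theta e^{-\lambda_j\theta/(2s)}\le 2s/(e\lambda_j)$, yields $|g|\le (2s/\lambda_j)e^{-\lambda_j\theta/(2s)}$, which is the $\alpha=-1$ bound.

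For $\alpha=1$ I split into two subcases. If $D\ge s^2/4$, then $\gamma/\sqrt D\le 2\gamma/s$. Otherwise $4\gamma\lambda_j>\tfrac34 s^2$, i.e. $s/\lambda_j<\tfrac{16}{3}\gamma/s$, and the previous $\theta$-branch bound already gives $\lesssim (\gamma/s) e^{-\lambda_j\theta/(2s)}$. Interpolating the two endpoints gives the first line of the statement for all $\alpha\in[-1,1]$, with a constant that I will check stays below $6$.

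\emph{Critical and underdamped cases $D\le0$.} For $D<0$ I use $|\sin x|\le\min(1,x)$ to get
\begin{align*}
|g|\le\min\bigl(2\gamma/\sqrt{|D|},\,\theta\bigr)e^{-s\theta/(2\gamma)}.
\end{align*}
The critical case $D=0$ has exactly the $\theta$-form.

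The $\theta$-branch with $\theta e^{-s\theta/(4\gamma)}\le 4\gamma/(es)$ gives the $\alpha=1$ bound $(4\gamma/(es))e^{-s\theta/(4\gamma)}$.

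For $\alpha=0$ I again split. If $|D|\ge2\gamma\lambda_j$, then $2\gamma/\sqrt{|D|}\le\sqrt{2\gamma/\lambda_j}$. Otherwise $s^2>2\gamma\lambda_j$, hence $\gamma/s<\sqrt{\gamma/(2\lambda_j)}$, and the $\alpha=1$ bound suffices. (When $D=0$ we have $\gamma/s=\tfrac12\sqrt{\gamma/\lambda_j}$ directly.) Interpolating between $\alpha=0$ and $\alpha=1$ gives the second line for $\alpha\in[0,1]$ with constant $\le3$.

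The only delicate point is the bookkeeping of the numerical constants, specifically verifying that $6$ and $3$ suffice after the subcase splits. The sub-split thresholds $s^2/4$ and $2\gamma\lambda_j$ may need adjusting to land within these constants. No deep obstacle is expected beyond this.
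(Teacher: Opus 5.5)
Your argument is correct and uses the same two elementary ingredients as the paper. The first is the $\theta$-branch (mean value theorem, resp. $|\sin x|\le x$, followed by $\sup_\theta \theta e^{-c\theta}$). The second is the $\gamma/\sqrt{|D|}$-branch, with a split according to the size of $|D|$.

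The difference is in how the $\alpha$-dependence is produced. The paper divides into five regions, comparing $4\gamma\lambda_j$ with $\frac12 s^2$, $s^2$ and $2s^2$. In each region it factors the bound directly into the target form. For instance, it writes $\sqrt{|D|}=|D|^{\alpha/2}|D|^{(1-\alpha)/2}$ and bounds each factor below, treating $\alpha<0$ and $\alpha\ge0$ separately when $s^2\ge 8\gamma\lambda_j$. Elsewhere it writes $\frac{2s}{e\lambda_j}=\frac{2}{e}s^{-\alpha}\lambda_j^{-\frac{1-\alpha}{2}}(s^2/\lambda_j)^{\frac{1+\alpha}{2}}$ and uses $s^2/\lambda_j<8\gamma$.

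You instead observe that one endpoint holds on the whole regime via the $\theta$-branch: $\alpha=-1$ when $D>0$, and $\alpha=1$ when $D\le0$. So only the other endpoint needs a case split, and $\min(A,B)\le A^{\frac{1+\alpha}{2}}B^{\frac{1-\alpha}{2}}$ then covers all $\alpha$ at once. The final constant is at most the larger endpoint constant. This needs fewer case distinctions and no sign-dependent handling of $\alpha$. Your $D\le0$ threshold $|D|=2\gamma\lambda_j$ coincides with the paper's $4\gamma\lambda_j=2s^2$.

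One correction to the bookkeeping: keep the factor $1/e$. As you display it, $|g|\le (2s/\lambda_j)e^{-\lambda_j\theta/(2s)}$ combined with $s/\lambda_j<\frac{16}{3}\gamma/s$ gives the constant $\frac{32}{3}>6$ in your second $\alpha=1$ subcase. Moving the threshold, as you suggest, cannot repair this. Splitting at $D=c\,s^2$ gives $\frac{8}{1-c}>8$ there, and the bound genuinely tends to $8\gamma/s$ as $D\to0^+$.

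With $\frac{2s}{e\lambda_j}$, which is what your own inequality $\theta e^{-\lambda_j\theta/(2s)}\le \frac{2s}{e\lambda_j}$ yields, you get $\max\{2,\frac{32}{3e}\}\approx 3.92<6$. For $D\le0$ your constants $\frac{4}{e}$, $\sqrt2$ and $\frac{4}{\sqrt2\,e}$ are all below $3$, so the second line holds as stated.
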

	
	\begin{proposition}\label{prop 4.2}
		For each $\alpha \in [0, 1]$ and any $\theta >0$, we have
		\begin{align*}
			|g'_{\gamma,\lambda_j,\xi}(\theta)|\leq \begin{cases}
				2\sqrt{2}(1+\xi)^{-\alpha} \gamma^{ \frac{\alpha}{2} } \lambda_j^{ \frac{\alpha}{2} }e^{ -\frac{\lambda_j}{2(1+\xi)}\theta }  + \sqrt{2}e^{-\frac{1+\xi}{4\gamma} \theta}, \text{ ~if~ } (1+\xi)^2-4\gamma\lambda_j>0,\\
				2\sqrt{2} (1+\xi)^{ -\alpha } \gamma^{ \frac{\alpha}{2} } \lambda_j^{ \frac{\alpha}{2} } e^{-\frac{1+\xi}{4\gamma}\theta}, \text{ ~if~ } (1+\xi)^2-4\gamma\lambda_j\leq0.
			\end{cases}
		\end{align*}
	\end{proposition}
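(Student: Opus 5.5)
The plan is a direct case analysis on the sign of the discriminant $D:=(1+\xi)^2-4\gamma\lambda_j$, using the explicit formula for $g'_{\gamma,\lambda_j,\xi}$ together with the elementary relations
\[
Z_1Z_2=\lambda_j/\gamma,\qquad Z_1+Z_2=(1+\xi)/\gamma,\qquad Z_2-Z_1=\sqrt{D}/\gamma .
\]
In both cases we compare the dimensionless quantity $q:=(\gamma\lambda_j)^{1/2}/(1+\xi)$ with $1/2$, since the prefactor $(1+\xi)^{-\alpha}\gamma^{\alpha/2}\lambda_j^{\alpha/2}=q^{\alpha}$. We have $q<1/2$ exactly when $D>0$, so $q^\alpha\ge q$ for $\alpha\in[0,1]$. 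When $D\le 0$ we have $q\ge 1/2$, hence $q^\alpha\ge 1/2$. The decisive reduction is then to prove the claim for the worst endpoint of $\alpha$ in each regime.

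\textbf{Case $D>0$.} Add and subtract $Z_1e^{-Z_2\theta}$ in the formula for $g'$ to obtain the identity
\[
g'_{\gamma,\lambda_j,\xi}(\theta)=e^{-Z_2\theta}-Z_1\,g_{\gamma,\lambda_j,\xi}(\theta).
\]
Since $Z_2\ge (1+\xi)/(2\gamma)$, the first term is at most $e^{-\frac{1+\xi}{4\gamma}\theta}$, which is covered by $\sqrt2\,e^{-\frac{1+\xi}{4\gamma}\theta}$. Next, write $Z_1=2\lambda_j/(1+\xi+\sqrt D)\in[\lambda_j/(1+\xi),\,2\lambda_j/(1+\xi)]$, so that $e^{-Z_1\theta}\le e^{-\lambda_j\theta/(1+\xi)}$. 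For $Z_1|g|=\frac{\gamma Z_1}{\sqrt D}(e^{-Z_1\theta}-e^{-Z_2\theta})$ there are two bounds:
\begin{itemize}
\item[(a)] the trivial bound $\frac{\gamma Z_1}{\sqrt D}e^{-Z_1\theta}$;
\item[(b)] the mean-value bound $Z_1\theta e^{-Z_1\theta}\le \tfrac{2}{e}e^{-Z_1\theta/2}$.
\end{itemize}
These are used in two subcases.
\begin{itemize}
\item[(i)] If $\sqrt D\ge (1+\xi)/2$, use (a): $\gamma Z_1/\sqrt D\le 4\gamma\lambda_j/(1+\xi)^2=4q^2\le 2q\le 2q^\alpha$, since $q<1/2$. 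This yields the first term $2\sqrt2\,q^\alpha e^{-\frac{\lambda_j}{2(1+\xi)}\theta}$.
\item[(ii)] If $\sqrt D<(1+\xi)/2$, then $q>\sqrt3/4$, so $q^\alpha\ge\sqrt3/4$. Then (b) gives $Z_1|g|\le \tfrac2e e^{-\frac{\lambda_j}{2(1+\xi)}\theta}\le 2\sqrt2\,q^\alpha e^{-\frac{\lambda_j}{2(1+\xi)}\theta}$, because $2\sqrt2\cdot\sqrt3/4\approx1.22>2/e$.
\end{itemize}

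\textbf{Case $D\le 0$.} Set $\mu=(1+\xi)/(2\gamma)$ and $\omega=\sqrt{-D}/(2\gamma)$, so that
\[
g'=e^{-\mu\theta}\bigl(\cos(\omega\theta)-\tfrac{\mu}{\omega}\sin(\omega\theta)\bigr),
\]
and the case $D=0$ is the limit $\omega\to0$. Here the target is $2\sqrt2\,q^\alpha e^{-\mu\theta/2}$, with $q^\alpha\ge 1/2$, so it suffices to show $|g'|\le\sqrt2\,e^{-\mu\theta/2}$. Split according to $\omega\ge\mu$ or $\omega<\mu$.
\begin{itemize}
\item[(i)] If $\omega\ge\mu$, the amplitude satisfies $\sqrt{1+\mu^2/\omega^2}\le\sqrt2$, which gives the bound even with $e^{-\mu\theta}$.
\item[(ii)] If $\omega<\mu$, we have $q=\tfrac12\sqrt{1+\omega^2/\mu^2}$, and use $|\sin x|\le x$ to get $|g'|\le e^{-\mu\theta}(1+\mu\theta)$. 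Then we need $(1+x)e^{-x/2}\le 2\sqrt2\,q^\alpha$ with $x=\mu\theta$.
\end{itemize}

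\textbf{Main obstacle.} The main difficulty is making the numerical constants match exactly in the borderline regimes. These are subcase (ii) of $D\le0$ when $q$ is close to $1/2$, where $\sup_x (1+x)e^{-x/2}=2e^{-1/2}\approx1.21<\sqrt2$, so this just works, and the transition $\sqrt D\approx(1+\xi)/2$ for $D>0$. If the crude splits above lose too much, the first refinement I would try is an interpolation of the two bounds $\min\{Z_1\theta,\gamma Z_1/\sqrt D\}\,e^{-Z_1\theta}$ with exponent $\alpha$, instead of the hard case split. I would also track the exponential rates so that only the halved rates $\lambda_j/(2(1+\xi))$ and $(1+\xi)/(4\gamma)$ are ever spent.
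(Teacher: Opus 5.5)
Your argument is correct and is essentially the paper's proof. Both split on the sign of $D$, use crude bounds when $D>0$ is far from critical and a mean value argument near criticality, and treat $D\le 0$ via $|\sin x|\le x$ together with $\sup_{x\ge 0}(1+x)e^{-x/2}=2e^{-1/2}<\sqrt2$. The paper applies this last bound to all $D\le 0$, so your extra split $\omega\gtrless\mu$ is unnecessary. The remaining differences are cosmetic: your identity $g'=e^{-Z_2\theta}-Z_1g$ replaces the paper's mean value theorem applied to $z\mapsto ze^{-z\theta}$, and your threshold $\sqrt D=(1+\xi)/2$ replaces $4\gamma\lambda_j=\tfrac12(1+\xi)^2$.

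The borderline constants you flagged do close ($2/e<\sqrt6/2$ and $2e^{-1/2}<\sqrt2$), so no interpolation refinement is needed.
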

	
	For certain special values, $H_{\gamma,\lambda_j,\xi}$ and $(a \ast H_{\gamma,\lambda_j,\xi})'$ admit subordination-type representations, as specified below.
	
	\begin{lemma}\label{lem 4.1}
		For each $\gamma>0$ and $\xi \geq 0$ and all $t>0$, we have
		\begin{align}
			&H_{\gamma,\lambda_j,\xi}(t;0,\gamma^{-1},0) = \gamma^{-1} \int_0^\infty g_{\gamma,\lambda_j,\xi}(\theta) \eta(t,\mathrm{d}\theta), \label{4.1} \\
			&(a\ast H_{\gamma,\lambda_j,\xi}(\cdot;0,\gamma^{-1},0))'(t) = \gamma^{-1} \int_0^\infty g_{\gamma,\lambda_j,\xi}'(\theta) \eta(t,\mathrm{d}\theta). \label{4.2}
		\end{align}
		\begin{proof}
			If $(1+\xi)^2-4\gamma\lambda_j\neq0$, one verifies directly from \eqref{3.9} and Proposition \ref{prop 2.1}(i) that \eqref{4.1} holds. Likewise, \eqref{4.2} follows from \eqref{3.14} and Proposition \ref{prop 2.1}(i).
			
			From Proposition \ref{prop 2.1}(i) we obtain
			\begin{align*}
				\int_0^\infty \theta e^{-\nu \theta} \, \widehat{\eta}(\lambda,\mathrm{d}\theta)=\int_0^\infty \theta e^{-\nu \theta}  e^{-\frac{\theta}{\widehat{b}(\lambda) } }\,\mathrm{d}\theta = \frac{\widehat{b}(\lambda)^2}{(1+\nu \widehat{b}(\lambda))^2} = \widehat{r}_\nu(\lambda)^2, \quad Re(\nu)>0.
			\end{align*}
			The uniqueness of the Laplace transform then gives
			\begin{align*}
				\int_{0}^\infty \theta e^{-\nu \theta} \eta(t,\mathrm{d}\theta) = (r_\nu \ast r_\nu)(t).
			\end{align*}
			Thus, if $(1+\xi)^2-4\gamma\lambda_j =0 $ , it follows from \eqref{3.10} and \eqref{3.15} that
			\begin{align*}
				H_{\gamma,\lambda_j,\xi}(t;0,\gamma^{-1},0) = \gamma^{-1}\int_0^\infty \theta e^{ -\frac{1+\xi}{2\gamma}\theta }\eta(t,\mathrm{d}\theta) =\gamma^{-1} \int_0^\infty g_{\gamma,\lambda_j,\xi}(\theta) \eta(t,\mathrm{d}\theta)
			\end{align*}
			and
			\begin{align*}
				(a \ast H_{\gamma,\lambda_j,\xi}(\cdot;0,\gamma^{-1},0))'(t)&=\gamma^{-1} \int_0^\infty e^{-\frac{1+\xi}{2\gamma}\theta }(1-\frac{1+\xi}{2\gamma}\theta) \eta(t,\mathrm{d}\theta)\\
				&=\gamma^{-1} \int_0^\infty g'_{\gamma,\lambda_j,\xi}(\theta) \eta(t,\mathrm{d}\theta),
			\end{align*}
			respectively.
			To summarize, both \eqref{4.1} and \eqref{4.2} are valid.
		\end{proof}
	\end{lemma}
	
	The following two lemmas are crucial for extracting decay factors in the moment estimates of stochastic integrals. Detailed proofs are provided in Appendix \ref{app A}. Our analysis shows that the new power-type estimates and subordination-type representations allow us to extract not only decay factors but also higher-power decay factors from estimates involving $H_{\gamma,\lambda_j,\xi}$ and $(a \ast H_{\gamma,\lambda_j,\xi})'$. We also introduce parameters(e.g., $\alpha_1,\beta_1$, see below) to encode the power-law exponents in $\gamma$, $\xi$, and $\lambda_j$. By adjusting these parameters, one may enhance the estimates with respect to a chosen parameter-$\gamma$, $\xi$, or $\lambda_j$-thereby accommodating various proof strategies. This flexibility is particularly important..
	\begin{lemma}\label{lem 4.2}
		Assume that $(a,b) \in (\mathcal{PC}_\varepsilon)$. Let  $\gamma>0$ and $\xi \geq 0$. Fix arbitrary $T>0$. Then we have the following assertions.
		\begin{itemize}
			\item[(i)] Suppose that the parameters $\alpha_1$ and $\beta_1$ satisfy
			\begin{align}\label{4.3}
				\begin{cases}
					0<\alpha_1<1,\\
					0\leq \beta_1<\min\{\alpha_1, \frac{\varepsilon}{2(2+\varepsilon)} \},\\
					\frac{1}{2}-\alpha_1+2\beta_1 >0.
				\end{cases}
			\end{align}
			Then for $(1+\xi)^2-4\gamma\lambda_j >0$, we have
			\begin{align}\label{4.4}
				\int_0^T |H_{\gamma,\lambda_j,\xi}(t;0,\gamma^{-1},0)|^2\, \mathrm{d}t
				\leq C_{\varepsilon,\beta_1,T}  (1+\xi)^{-2(\alpha_1-\beta_1)}\gamma^{\alpha_1-1}  \lambda_j^{-(1-\alpha_1+2\beta_1)}
			\end{align}
			and
			\begin{align}\label{4.5}
				\begin{aligned}
					&\quad \int_0^T |(a\ast H_{\gamma,\lambda_j,\xi}(\cdot;0,\gamma^{-1},0))'(t)|^2\, \mathrm{d}t\\
					&\leq C_{\varepsilon,\beta_1,T}  \lambda_j^{-(1-\alpha_1+2\beta_1)}  (1+\xi)^{-2(\alpha_1-\beta_1)}\gamma^{\alpha_1-2}
					+(1+\xi)^{-2\beta_1}\gamma^{2\beta_1-2},
				\end{aligned}
			\end{align}
			where $C_{\varepsilon,\beta_1,T} \to 0$ as $T\to 0$.
			\item [(ii)] Suppose that the parameters $\alpha_2$ and $\beta_2$ satisfy
			\begin{align}\label{4.6}
				\begin{cases}
					0\leq\alpha_2<\frac{1}{2},\\
					0\leq \beta_2< \frac{\varepsilon}{2(2+\varepsilon)}.
				\end{cases}
			\end{align}
			Then for $(1+\xi)^2-4\gamma\lambda_j \leq 0$, we have
			\begin{align}\label{4.7}
				\int_0^T |H_{\gamma,\lambda_j,\xi}(t;0,\gamma^{-1},0)|^2\, \mathrm{d}t
				\leq C_{\varepsilon,\beta_2,T}  (1+\xi)^{-2(\alpha_2+\beta_2)}\gamma^{2\beta_2+\alpha_2-1}  \lambda_j^{-(1-\alpha_2)}
			\end{align}
			and
			\begin{align}\label{4.8}
				\begin{aligned}
					\int_0^T  |(a \ast H_{\gamma,\lambda_j,\xi}(\cdot;0,\gamma^{-1},0))'(t)|^2\, \mathrm{d}t
					\leq C_{\varepsilon,\beta_2,T} \lambda_j^{\alpha_2} (1+\xi)^{-2(\alpha_2+\beta_2)}\gamma^{2\beta_2+\alpha_2-2},
				\end{aligned}
			\end{align}
			where $C_{\varepsilon,\beta_2,T} \to 0$ as $T\to 0$.
		\end{itemize}
	\end{lemma}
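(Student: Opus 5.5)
The plan is to combine the subordination representation of Lemma \ref{lem 4.1} with the pointwise bounds of Propositions \ref{prop 4.1}--\ref{prop 4.2} and the resolvent bound of Proposition \ref{prop 2.1}(iv), applied to the kernel $l=b$. The kernel $b$ is completely positive and nonincreasing under $(\mathcal{PC}_\varepsilon)$, so Proposition \ref{prop 2.1}(iv) applies.

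\textbf{Step 1: reduction to a time integral of $b$.} By \eqref{4.1}, $|H_{\gamma,\lambda_j,\xi}(t;0,\gamma^{-1},0)|\le \gamma^{-1}\int_0^\infty |g_{\gamma,\lambda_j,\xi}(\theta)|\,\eta(t,\mathrm d\theta)$. Proposition \ref{prop 4.1} bounds $|g|$ by a constant times $e^{-c\theta}$, where $c=\lambda_j/(2(1+\xi))$ in case (i) and $c=(1+\xi)/(4\gamma)$ in case (ii). Since $\eta(t,\cdot)$ is a positive measure, Proposition \ref{prop 2.1}(i) gives $\int_0^\infty e^{-c\theta}\eta(t,\mathrm d\theta)=r_c(t)$. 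Proposition \ref{prop 2.1}(iv) then gives, for any $\beta\in[0,1]$,
\begin{align*}
r_c(t)\le \frac{b(t)}{1+c(1\ast b)(t)}\le c^{-\beta}\, b(t)\,(1\ast b)(t)^{-\beta},
\end{align*}
using $(1+x)^{-1}\le x^{-\beta}$. Because $b$ is nonincreasing, $(1\ast b)(t)\ge t\,b(t)$, so $r_c(t)^2\le c^{-2\beta} b(t)^{2-2\beta}t^{-2\beta}$. By H\"older's inequality with $b\in L^{2+\varepsilon}(0,T)$ and exponent $p=\frac{2+\varepsilon}{2-2\beta}$, the integral $K_{\beta,T}:=\int_0^T b^{2-2\beta}t^{-2\beta}\,\mathrm dt$ is finite as soon as $2\beta\frac{2+\varepsilon}{\varepsilon+2\beta}<1$. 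This is implied by $\beta<\frac{\varepsilon}{2(2+\varepsilon)}$. Moreover $K_{\beta,T}\to0$ as $T\to0$, and this quantity will serve as $C_{\varepsilon,\beta,T}$.

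\textbf{Step 2: exponent bookkeeping for \eqref{4.4}, \eqref{4.7}, \eqref{4.8}.} In case (i) I take $\alpha=\alpha_1\in(0,1)$ in Proposition \ref{prop 4.1} and $\beta=\beta_1$. This gives
\begin{align*}
\gamma^{-2}(1+\xi)^{-2\alpha_1}\gamma^{1+\alpha_1}\lambda_j^{-(1-\alpha_1)}\Big(\frac{\lambda_j}{1+\xi}\Big)^{-2\beta_1}K_{\beta_1,T},
\end{align*}
which is exactly \eqref{4.4}. In case (ii) I take $\alpha=\alpha_2$ and $c=(1+\xi)/(4\gamma)$, which produces $(1+\xi)^{-2(\alpha_2+\beta_2)}\gamma^{\alpha_2+2\beta_2-1}\lambda_j^{-(1-\alpha_2)}$, i.e. \eqref{4.7}. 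For \eqref{4.8} I use \eqref{4.2} together with the second line of Proposition \ref{prop 4.2}. This yields $\gamma^{-2}(1+\xi)^{-2\alpha_2}\gamma^{\alpha_2}\lambda_j^{\alpha_2}\big((1+\xi)/\gamma\big)^{-2\beta_2}K_{\beta_2,T}$, as required. The remaining constraints in \eqref{4.3} and \eqref{4.6} (for instance $\beta_1<\alpha_1$ and $\frac12-\alpha_1+2\beta_1>0$) are not needed for these estimates. I expect they are there for later summation over $j$.

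\textbf{Step 3: the velocity estimate \eqref{4.5}, which is the main obstacle.} In case (i), using the first line of Proposition \ref{prop 4.2} directly would give the wrong power $\lambda_j^{\alpha_1-2\beta_1}$. Instead I split $g'$ algebraically. Since $Z_2-Z_1=\sqrt{(1+\xi)^2-4\gamma\lambda_j}/\gamma$, we have
\begin{align*}
g'_{\gamma,\lambda_j,\xi}(\theta)=-Z_1\,g_{\gamma,\lambda_j,\xi}(\theta)+e^{-Z_2\theta}.
\end{align*}
For the second piece, $Z_2\ge(1+\xi)/(2\gamma)$. Step 1 with $c=Z_2$ then gives $\gamma^{-2}r_{Z_2}^2\lesssim \gamma^{-2}\big((1+\xi)/\gamma\big)^{-2\beta_1}$, which is the term $(1+\xi)^{-2\beta_1}\gamma^{2\beta_1-2}$ in \eqref{4.5}. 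For the first piece I would combine Proposition \ref{prop 4.1} (case (i), where negative $\alpha$ is permitted) with the bounds on $Z_1$. These are $Z_1=\frac{2\lambda_j}{1+\xi+\sqrt{(1+\xi)^2-4\gamma\lambda_j}}\le \frac{2\lambda_j}{1+\xi}$ and $Z_1^2\le Z_1Z_2=\lambda_j/\gamma$, and I would interpolate between the two. The aim is to convert the extra factor $Z_1$ into the loss of one power $\gamma^{-1}$ while keeping $\lambda_j^{-(1-\alpha_1+2\beta_1)}(1+\xi)^{-2(\alpha_1-\beta_1)}$. The decay rate $e^{-\lambda_j\theta/(2(1+\xi))}$ is again fed into Step 1.

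Getting the three exponents in $\gamma$, $1+\xi$ and $\lambda_j$ to match simultaneously in this piece is the delicate part. If the interpolation does not close directly, I would first try choosing the exponent $\alpha$ in Proposition \ref{prop 4.1} from the full range $[-1,1]$. Failing that, I would redistribute the $\beta_1$ decay between $c^{-\beta}$ and the $Z_1$ factor. Finally, the statement $C_{\varepsilon,\beta,T}\to0$ as $T\to0$ follows in every case from $K_{\beta,T}\to0$.
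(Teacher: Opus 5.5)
Your Steps 1 and 2 are essentially the paper's argument and correctly give \eqref{4.4}, \eqref{4.7} and \eqref{4.8}. The ingredients are the same: the subordination formulas \eqref{4.1}--\eqref{4.2}, Propositions \ref{prop 4.1}--\ref{prop 4.2}, and then $\int_0^T r_c^2\,\mathrm{d}t\le C_{\varepsilon,\beta,T}c^{-2\beta}$ from Proposition \ref{prop 2.1}(iv) and H\"older with $b\in L^{2+\varepsilon}$. Your use of $(1\ast b)(t)\ge t\,b(t)$ in place of the paper's $b(t)\ge b(T)$ is harmless.

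\textbf{The gap is Step 3.} You leave it open, and the target you set there is unattainable.
\begin{itemize}
\item The paper proves \eqref{4.5} exactly by the route you reject: the first line of Proposition \ref{prop 4.2} with $\alpha=\alpha_1$. Combined with \eqref{A.10}, this yields $\lambda_j^{\alpha_1-2\beta_1}$ in the first term, so your observation is correct.
\item That weaker form is also what Lemma \ref{lem 5.2}(i) actually uses: there $\lambda_i^{-(1-\delta)}$ times this term is summed as $\lambda_i^{-(1-\alpha_1+2\beta_1-\delta)}$.
\item Your splitting $(a\ast H)'=-Z_1H+\gamma^{-1}r_{Z_2}$ is correct, but $Z_1H$ is \emph{not} bounded by the first term of \eqref{4.5}. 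Take $\lambda_j=1+\xi$ and $\gamma=(1+\xi)/8$. Then $Z_1=4(1-2^{-1/2})$ and $Z_2=4(1+2^{-1/2})$ are fixed, and $\sqrt{(1+\xi)^2-4\gamma\lambda_j}=(1+\xi)/\sqrt2$.
\item By \eqref{3.9}, $H=\sqrt2(1+\xi)^{-1}(r_{Z_1}-r_{Z_2})$ exactly. Hence $\int_0^T|Z_1H|^2\,\mathrm{d}t=c_T(1+\xi)^{-2}$ with $c_T>0$ independent of $\xi$.
\item At these values the first term of \eqref{4.5} equals $8^{2-\alpha_1}(1+\xi)^{-3}$. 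Since the left side is an identity, not a lossy estimate, no choice of $\alpha\in[-1,1]$ in Proposition \ref{prop 4.1} and no redistribution of $\beta_1$ can close the interpolation you describe.
\end{itemize}

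\textbf{The missing idea.} In case (i) the whole velocity integral is controlled by the \emph{second} term of \eqref{4.5}, because $4\gamma\lambda_j<(1+\xi)^2$ lets you trade $\lambda_j$ for $(1+\xi)^2/\gamma$. Concretely, apply the first line of Proposition \ref{prop 4.2} with $\alpha=2\beta_1\in[0,1)$ rather than $\alpha_1$. The factor $\lambda_j^{2\beta_1}$ it produces cancels the $\lambda_j^{-2\beta_1}$ coming from $\int_0^T r^2_{\lambda_j/(2(1+\xi))}\,\mathrm{d}t\le C(\lambda_j/(1+\xi))^{-2\beta_1}$. Together with the $r_{(1+\xi)/(4\gamma)}$ piece this gives
\[
\int_0^T|(a\ast H_{\gamma,\lambda_j,\xi}(\cdot;0,\gamma^{-1},0))'(t)|^2\,\mathrm{d}t\le C_{\varepsilon,\beta_1,T}(1+\xi)^{-2\beta_1}\gamma^{2\beta_1-2},
\]
with $C_{\varepsilon,\beta_1,T}\to0$ as $T\to0$. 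This implies \eqref{4.5} as displayed, reading its second term with the constant $C_{\varepsilon,\beta_1,T}$, which is what the paper's own proof produces. With this replacement your proof is complete. Without it you only have the paper's weaker $\lambda_j^{\alpha_1-2\beta_1}$ version, which suffices downstream but is not the displayed statement.
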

	
	\begin{remark}\label{rem 4.1}
		If $(a,b) \in (\mathcal{PC}_{\varepsilon})$ with $\varepsilon>2$, then the constraint $\alpha_1 \in (0,1)$ in \eqref{4.3}
		can be relaxed to $\alpha_1 \in (0,1]$.
	\end{remark}
	
	\begin{lemma}\label{lem 4.3}
		Assume that $(a,b) \in (\mathcal{PC}_\varepsilon)$. Let $\gamma>0$ and $\xi \geq 0$. Fix arbitrary $T>0$ and let $\delta\in [0, \frac{\varepsilon}{2+\varepsilon})$. Then for $(1+\xi)^2-4\gamma\lambda_j>0$, we have
		\begin{align}\label{4.9}
			\int_0^T |H_ {\gamma,\lambda_j,\xi}(t; 0, \gamma^{-1},0)|\,\mathrm{d}t
			\leq C_{\varepsilon,\delta,T} (1+\xi)^{-\frac{1-\delta}{2}}  \lambda_j^{-\frac{1+\delta}{2}}
		\end{align}
		and
		\begin{align}\label{4.10}
			\int_0^T |(a \ast H_ {\gamma,\lambda_j,\xi}(\cdot; 0, \gamma^{-1},0))'(t)|\,\mathrm{d}t
			\leq C_{\varepsilon,\delta,T}\gamma^{-\frac{1}{2} } (1+\xi)^{-\frac{1-\delta}{2}}  \lambda_j^{-\frac{\delta}{2}};
		\end{align}
		for $(1+\xi)^2-4\gamma\lambda_j\leq 0$, we have
		\begin{align}\label{4.11}
			\int_0^T |H_ {\gamma,\lambda_j,\xi}(t; 0, \gamma^{-1},0)|\,\mathrm{d}t
			\leq C_{\varepsilon,\delta,T}\gamma^{\frac{\delta}{2} } (1+\xi)^{-\frac{1+\delta}{2}}\lambda_j^{-1}
		\end{align}
		and
		\begin{align}\label{4.12}
			\int_0^T |(a \ast H_ {\gamma,\lambda_j,\xi}(\cdot; 0, \gamma^{-1},0))'(t)|\,\mathrm{d}t
			\leq C_{\varepsilon,\delta,T}\gamma^{-\frac{1-\delta}{2} } (1+\xi)^{-\frac{1+\delta}{2}}.
		\end{align}
	\end{lemma}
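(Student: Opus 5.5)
The plan is to use the subordination formulas of Lemma \ref{lem 4.1} together with the power-type bounds of Propositions \ref{prop 4.1} and \ref{prop 4.2}, and then the moment information on $\eta(t,\mathrm{d}\theta)$ coming from $b\in L^{2+\varepsilon}_{loc}$. By \eqref{4.1}, $|H_{\gamma,\lambda_j,\xi}(t;0,\gamma^{-1},0)|\le \gamma^{-1}\int_0^\infty |g_{\gamma,\lambda_j,\xi}(\theta)|\,\eta(t,\mathrm{d}\theta)$, where $\eta(t,\cdot)\ge 0$. Similarly \eqref{4.2} bounds the velocity part through $|g'|$. Integrating in $t\in[0,T]$ and using Tonelli, everything reduces to bounding quantities of the form
\[
\int_0^T\int_0^\infty e^{-\mu\theta}\,\eta(t,\mathrm{d}\theta)\,\mathrm{d}t,\qquad
\int_0^T r_\mu(t)\,\mathrm{d}t,
\]
with $\mu=\frac{\lambda_j}{2(1+\xi)}$ in the overdamped case and $\mu=\frac{1+\xi}{4\gamma}$ in the underdamped case. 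Indeed, Proposition \ref{prop 2.1}(i) with $l=b$ identifies $\int_0^\infty e^{-\mu\theta}\eta(t,\mathrm{d}\theta)=r_\mu(t)$.

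The key auxiliary estimate is the $L^1$ bound
\[
\int_0^T r_\mu(t)\,\mathrm{d}t\le C_{\varepsilon,\delta,T}\,\mu^{-\frac{1+\delta}{2}}\quad\text{for}\ \mu\ge 1\ \text{and}\ \delta<\tfrac{\varepsilon}{2+\varepsilon},
\]
together with the trivial bound $\|r_\mu\|_{L^1(0,T)}\le\|b\|_{L^1(0,T)}$. The exponent $\frac{1+\delta}{2}$ is what the claimed powers require; I will settle the exact exponent while checking the scaling below. To prove this bound I use Proposition \ref{prop 2.1}(iv): $r_\mu(t)\le b(t)/(1+\mu(1\ast b)(t))$. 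Since $b$ is nonincreasing, $(1\ast b)(t)\ge t\,b(t)$, and also $(1\ast b)(t)\ge c\,t$ on $[0,T]$. Hence $r_\mu(t)\le \min\{b(t),(\mu t)^{-1}\}$. Splitting at $t_0=\mu^{-\rho}$ and applying Hölder with $b\in L^{2+\varepsilon}$ gives
\[
\int_0^{t_0}b\le \|b\|_{L^{2+\varepsilon}}\,t_0^{\frac{1+\varepsilon}{2+\varepsilon}}.
\]
On $[t_0,T]$ the integral contributes about $\mu^{-1}\log(\mu T)$. A better alternative is to interpolate, $r_\mu\le b^{1-s}(\mu\,(1\ast b))^{-s}$, and then apply Hölder. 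Optimizing $\rho$ should produce exactly the threshold $\delta<\varepsilon/(2+\varepsilon)$ (the same one behind $\beta<\frac{\varepsilon}{2(2+\varepsilon)}$ in Lemma \ref{lem 4.2}). I expect this exponent bookkeeping to be the main obstacle.

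Given this bound, I treat the two cases separately.

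\emph{Overdamped case.} Here $(1+\xi)^2>4\gamma\lambda_j$. In Proposition \ref{prop 4.1} I take an $\alpha$ that leaves a factor $\mu^{-(1+\delta)/2}$ and matches the target powers. For instance, $\alpha=-\delta$ gives $(1+\xi)^{\delta}\gamma^{\frac{1-\delta}{2}}\lambda_j^{-\frac{1+\delta}{2}}$, which multiplied by $\gamma^{-1}$ and by the $L^1$ bound in $\mu=\lambda_j/(2(1+\xi))$ yields powers of $(1+\xi)$, $\lambda_j$ and $\gamma$. I then use $\gamma<(1+\xi)^2/(4\lambda_j)$ to convert any leftover $\gamma$-power into $(1+\xi)$ and $\lambda_j$ powers, aiming at \eqref{4.9}. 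For \eqref{4.10}, Proposition \ref{prop 4.2} gives two terms. The first is handled as before with a suitable $\alpha\in[0,1]$. The second, $\sqrt2 e^{-\frac{1+\xi}{4\gamma}\theta}$, gives $\gamma^{-1}\int_0^T r_{(1+\xi)/(4\gamma)}$. Since $\frac{1+\xi}{4\gamma}\ge\frac{\lambda_j}{1+\xi}$ in this regime, its bound can be traded between the $\gamma$ and $\lambda_j$ scales, and I will check that this produces the stated factor $\gamma^{-1/2}(1+\xi)^{-\frac{1-\delta}{2}}\lambda_j^{-\delta/2}$.

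\emph{Underdamped case.} Here $(1+\xi)^2\le4\gamma\lambda_j$ and $\mu=\frac{1+\xi}{4\gamma}$. I take $\alpha=1$ in Proposition \ref{prop 4.1} to get $|g|\lesssim(1+\xi)^{-1}\gamma\,e^{-\mu\theta}$. Together with the $L^1$ bound and $\gamma^{-1}$ this gives $(1+\xi)^{-1}\mu^{-(1+\delta)/2}$-type factors. Using $\lambda_j^{-1}\ge(1+\xi)^2/(4\gamma)\cdot\lambda_j^{-2}$ in the direction needed, or the reverse inequality, I adjust the powers to reach \eqref{4.11}. In practice I will pick $\alpha$ so that the $\lambda_j^{-1}$ appears directly and use $\gamma\lambda_j\gtrsim(1+\xi)^2$ to fix the remaining exponents. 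For \eqref{4.12}, I take $\alpha=0$ in Proposition \ref{prop 4.2}, so $|g'|\le 2\sqrt2 e^{-\mu\theta}$. This gives $\gamma^{-1}\mu^{-(1+\delta)/2}=C\gamma^{-\frac{1-\delta}{2}}(1+\xi)^{-\frac{1+\delta}{2}}$, exactly \eqref{4.12}, which confirms that $\frac{1+\delta}{2}$ is the right exponent in the $L^1$ resolvent bound. In each case, the remaining work is only checking that the $\gamma$-constraint can absorb the leftover powers.
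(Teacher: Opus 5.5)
Your architecture is the paper's. You bound $|H_{\gamma,\lambda_j,\xi}|$ and $|(a\ast H_{\gamma,\lambda_j,\xi})'|$ via \eqref{4.1}--\eqref{4.2} by $\gamma^{-1}\int|g|\,\eta(t,\mathrm{d}\theta)$ and $\gamma^{-1}\int|g'|\,\eta(t,\mathrm{d}\theta)$, insert Propositions \ref{prop 4.1}--\ref{prop 4.2}, and finish with $\int_0^T r_\mu\,\mathrm{d}t\le C\mu^{-\frac{1+\delta}{2}}$. Your treatment of \eqref{4.12} ($\alpha=0$) and of the second term in \eqref{4.10} is exactly the paper's.

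\textbf{The choice of $\alpha$ is forced in the overdamped case.} For general $\alpha$ you get
\[
\int_0^T|H_{\gamma,\lambda_j,\xi}(t;0,\gamma^{-1},0)|\,\mathrm{d}t\le C\gamma^{-\frac{1-\alpha}{2}}(1+\xi)^{\frac{1+\delta}{2}-\alpha}\lambda_j^{-\frac{1-\alpha}{2}-\frac{1+\delta}{2}},
\]
which is the right side of \eqref{4.9} times $\bigl((1+\xi)^2/(\gamma\lambda_j)\bigr)^{\frac{1-\alpha}{2}}$. The overdamped condition only bounds $\gamma\lambda_j$ from above, so this factor is unbounded (e.g.\ as $\gamma\to0$) unless $\alpha=1$. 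Your sample choice $\alpha=-\delta$ therefore fails, and $\gamma<(1+\xi)^2/(4\lambda_j)$ can only absorb nonnegative powers of $\gamma$. With $\alpha=1$ (the paper's choice) you get $|H|\le 6(1+\xi)^{-1}r_{\lambda_j/(2(1+\xi))}$, and \eqref{4.9} follows at once. The same computation forces $\alpha=1$ for the first term of \eqref{4.10}.

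\textbf{The genuine gap is \eqref{4.11}.} In the underdamped regime Proposition \ref{prop 4.1} only allows $\alpha\in[0,1]$. So $|g|$ decays at best like $\lambda_j^{-1/2}$, and the resolvent factor $\int_0^T r_{(1+\xi)/(4\gamma)}\,\mathrm{d}t$ contains no $\lambda_j$ at all. For general $\alpha$ your computation yields
\[
\int_0^T|H_{\gamma,\lambda_j,\xi}(t;0,\gamma^{-1},0)|\,\mathrm{d}t\le C\gamma^{\frac{\alpha+\delta}{2}}(1+\xi)^{-\alpha-\frac{1+\delta}{2}}\lambda_j^{-\frac{1-\alpha}{2}},
\]
which exceeds the right side of \eqref{4.11} by $\bigl(\gamma\lambda_j/(1+\xi)^2\bigr)^{\alpha/2}\lambda_j^{1/2}\ge 2^{-\alpha}\lambda_j^{1/2}$. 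Your own choice $\alpha=1$ gives no $\lambda_j$-decay whatsoever. Since $4\gamma\lambda_j\ge(1+\xi)^2$ bounds $\lambda_j$ from below, no trading of exponents can produce $\lambda_j^{-1}$. The best this route gives is $\alpha=0$, namely $C\gamma^{\delta/2}(1+\xi)^{-\frac{1+\delta}{2}}\lambda_j^{-1/2}$.

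This is also all the paper's own argument delivers: \eqref{A.16} and \eqref{A.17} multiply to exactly this $\lambda_j^{-1/2}$ bound. The $\lambda_j^{-1/2}$ form is also precisely what Remark \ref{rem 4.2} needs, since it turns Young's inequality into the $L^2([0,T];H^{-1})$ norm. To reach $\lambda_j^{-1}$ you would have to exploit the oscillation of $g$ against $\eta(t,\mathrm{d}\theta)$ instead of bounding $|g|$. For instance, for $(1+\xi)^2<4\gamma\lambda_j$ write $H_{\gamma,\lambda_j,\xi}(t;0,\gamma^{-1},0)=(\gamma\omega)^{-1}\mathrm{Im}\,r_{Z}(t)$, with $\omega=\sqrt{4\gamma\lambda_j-(1+\xi)^2}/(2\gamma)$ and $Z=\frac{1+\xi}{2\gamma}-i\omega$, and estimate $\int_0^T|r_Z|\,\mathrm{d}t$ for complex $Z$. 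Neither your plan nor the paper contains such an argument. So either prove the $\lambda_j^{-1/2}$ version or supply a cancellation estimate of this kind.

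\textbf{The step you call the main obstacle is easier than you expect.} The threshold $\delta<\frac{\varepsilon}{2+\varepsilon}$ does not come out of it. Since $(1\ast b)'=b$, Proposition \ref{prop 2.1}(iv) gives
\[
\int_0^T r_\mu(t)\,\mathrm{d}t\le\int_0^T\frac{b(t)}{1+\mu(1\ast b)(t)}\,\mathrm{d}t=\frac{\log\bigl(1+\mu(1\ast b)(T)\bigr)}{\mu}\le C_s\,[(1\ast b)(T)]^{1-s}\mu^{-s}
\]
for every $s\in[0,1)$ and every $\mu>0$. Your splitting or interpolation route also works for all $s<1$. The paper instead applies H\"older with $b\in L^{2+\varepsilon}$ and then $b(t)\ge b(T)$. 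Its restriction on $\delta$ enters only through requiring $\int_0^T[(1\ast b)(t)]^{-\frac{(1+\delta)(2+\varepsilon)}{2(1+\varepsilon)}}\,\mathrm{d}t<\infty$.
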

	
	The next lemma collects some estimates
	related to  $H_{\gamma,\lambda_j,\xi}(\cdot;h_0,h_1,0)$ with respect to $h_0$ and $h_1$.
	These estimates are useful in deriving deterministic estimates for $\mathcal R_{\gamma,\xi}(t)$. The proof is given in Appendix \ref{app A}.
	\begin{lemma}\label{lem 4.4}
		The solution $H_{\gamma,\xi,\lambda_j}(t;h_0,h_1,m)$ of \eqref{3.8} satisfies
		\begin{align}\label{4.13}
			|H_{\gamma,\lambda_j,\xi}(t;h_0,h_1,0)|\leq \frac{5}{2}b(t)|h_0| +\frac{3\gamma}{1+\xi} b(t)|h_1|,
		\end{align}
		and
		\begin{align}\label{4.14}
			\begin{aligned}
				&\quad |(a\ast H_{\gamma,\lambda_j,\xi}(\cdot;h_0,h_1,0))'(t)|\\
				&\leq \begin{cases}
					\frac{3\lambda_j}{1+\xi}b(t)|h_0| +2(\sqrt{2}+1)b(t)|h_1|,\text{ if } (1+\xi)^2-4\gamma\lambda_j \geq 0,\\
					3 \sqrt{\frac{\lambda_j}{\gamma} } b(t)|h_0| +2(\sqrt{2}+1)b(t)|h_1|,\text{ if } (1+\xi)^2-4\gamma\lambda_j < 0.
				\end{cases}
			\end{aligned}
		\end{align}
	\end{lemma}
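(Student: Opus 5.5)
Fix $j$, write $D:=(1+\xi)^2-4\gamma\lambda_j$, and split into the three cases $D>0$, $D=0$, $D<0$. In each case I would write $H_{\gamma,\lambda_j,\xi}(t;h_0,h_1,0)$ and its velocity $(a\ast H)'$ in subordinated form $\int_0^\infty G(\theta)\,\eta(t,\mathrm{d}\theta)$, bound $|G|$ uniformly in $\theta$, and finish with Proposition \ref{prop 2.1}(i) applied to the kernel $l=b$, which is completely positive by $(\mathcal{PC}_\varepsilon)$. That proposition gives $\eta(t,\cdot)\ge0$ and $\int_0^\infty\eta(t,\mathrm{d}\theta)=b(t)$, so $|\int G\,\mathrm{d}\eta|\le \sup|G|\,b(t)$.

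\textbf{The $h_1$-parts.} By linearity and Lemma \ref{lem 4.1}, $H(t;0,h_1,0)=h_1\int g_{\gamma,\lambda_j,\xi}(\theta)\,\eta(t,\mathrm{d}\theta)$ and $(a\ast H(\cdot;0,h_1,0))'=h_1\int g'_{\gamma,\lambda_j,\xi}(\theta)\,\eta(t,\mathrm{d}\theta)$. The estimate then reduces to two sup bounds. The first is $\sup_\theta|g|\le 3\gamma/(1+\xi)$; for $D>0$ this follows from Proposition \ref{prop 4.1} with $\alpha=1$, which gives $6\gamma/(1+\xi)$. To get the sharper constant 3, I would note that $0\le e^{-Z_1\theta}-e^{-Z_2\theta}\le1$ and that $\sqrt D\ge$ something is not available near $D=0$. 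Instead I would use the mean value bound $e^{-Z_1\theta}-e^{-Z_2\theta}\le (Z_2-Z_1)\theta e^{-Z_1\theta}\le (Z_2-Z_1)/(eZ_1)$. Combined with $Z_1\ge \lambda_j/(1+\xi)$, the fact that $Z_1Z_2=\lambda_j/\gamma$, and $Z_1+Z_2=(1+\xi)/\gamma$, this gives $|g|\le \gamma/(e\,\gamma Z_1)$, and hence after rearranging a multiple of $\gamma/(1+\xi)$. The other two cases are direct: $\theta e^{-c\theta}\le 1/(ec)$ with $c=(1+\xi)/(2\gamma)$, and $|\sin x|\le \min(1,x)$. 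The second sup bound is $\sup_\theta|g'|\le 2(\sqrt2+1)$. This comes from Proposition \ref{prop 4.2} with $\alpha=0$, which gives $2\sqrt2+\sqrt2$ for $D>0$ and $2\sqrt2$ for $D\le0$, both at most $2(\sqrt2+1)$.

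\textbf{The $h_0$-parts.} From \eqref{3.9}, using $r_{Z_i}(t)=\int e^{-Z_i\theta}\eta(t,\mathrm{d}\theta)$, we get $H(t;h_0,0,0)=h_0\int k(\theta)\,\eta(t,\mathrm{d}\theta)$ with $k=\tfrac12(e^{-Z_1\theta}+e^{-Z_2\theta})+\tfrac{1+\xi}{2\sqrt D}(e^{-Z_1\theta}-e^{-Z_2\theta})$. The first bracket is at most 1. The second term is $\tfrac{1+\xi}{2\gamma}g$, which is bounded by $\tfrac{1+\xi}{2\gamma}\cdot\tfrac{3\gamma}{1+\xi}=\tfrac32$. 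Together these give $\tfrac52$. The case $D=0$ is identical via \eqref{3.10}: $r+c\,r\ast r$ with $c\theta e^{-c\theta}\le 1/e$. For the complex case $D<0$ the same formula holds, with $\cos$ and $\sin$ replacing the exponentials. For the velocity, \eqref{3.14} gives the $h_0$-part as $-\lambda_j h_0\gamma^{-1}\int g\,\mathrm{d}\eta$. Hence its size is at most $\lambda_j\gamma^{-1}\sup|g|$.
\begin{itemize}
\item For $D\ge0$, using $\sup|g|\le 3\gamma/(1+\xi)$ gives $3\lambda_j/(1+\xi)$.
\item For $D<0$, using $|g|\le 2\gamma/\sqrt{4\gamma\lambda_j-(1+\xi)^2}$ is bad near $D=0$. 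Instead I would use Proposition \ref{prop 4.1} with $\alpha=0$: $|g|\le 3\sqrt{\gamma/\lambda_j}$. This yields $3\sqrt{\lambda_j/\gamma}$.
\end{itemize}

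\textbf{Main obstacle.} The delicate point is the oscillatory regime $D<0$ close to $D=0$. There the prefactors $1/\sqrt{|D|}$ blow up and must be cancelled, via $|\sin x|\le x$, against the damping $e^{-(1+\xi)\theta/(2\gamma)}$, while keeping the numerical constants $5/2$ and $3$ exact. I would handle this by bounding $\tfrac{1+\xi}{\sqrt{|D|}}|\sin(\sqrt{|D|}\theta/2\gamma)|e^{-(1+\xi)\theta/2\gamma}\le \tfrac{1+\xi}{2\gamma}\theta e^{-(1+\xi)\theta/2\gamma}\le e^{-1}$, which is uniform across the transition. The remaining work is checking that the real-case constants indeed add up as claimed.
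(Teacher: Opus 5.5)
Your framework is the paper's: subordinated representations against $\eta(t,\mathrm{d}\theta)$ with total mass $b(t)$, Lemma \ref{lem 4.1} for the $h_1$-parts, \eqref{3.9} and \eqref{3.14} for the $h_0$-parts, Proposition \ref{prop 4.2} with $\alpha=0$ for $g'$, and Proposition \ref{prop 4.1} with $\alpha=0$ for the $D<0$ velocity term. Your treatment of $D\le 0$ is fine. For $D=0$ you use the subordinated form of $r\ast r$, where the paper uses $r_c+c\,(b\ast r_c)=b$ and $0\le r_c\le b$; both work.

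\textbf{The gap.} It is the one step you single out: $\sup_\theta|g_{\gamma,\lambda_j,\xi}(\theta)|\le 3\gamma/(1+\xi)$ for $D>0$. Your mean value estimate gives $|g|\le 1/(eZ_1)\le(1+\xi)/(e\lambda_j)$. This cannot be ``rearranged'' into a multiple of $\gamma/(1+\xi)$: that would need $\lambda_j\gtrsim(1+\xi)^2/\gamma$, while $D>0$ says $\lambda_j<(1+\xi)^2/(4\gamma)$, the opposite direction. Using $Z_1Z_2=\lambda_j/\gamma$ does not help, since $1/(eZ_1)=\gamma Z_2/(e\lambda_j)$ and $Z_2\approx(1+\xi)/\gamma$ in the overdamped regime. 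Concretely, take $\gamma$ fixed, $j=1$ and $\xi\to\infty$. Then $Z_1\approx\lambda_1/(1+\xi)$, so your bound is of order $(1+\xi)/\lambda_1$, while the true supremum (and the target) is of order $\gamma/(1+\xi)$. The mean value theorem is efficient only near $D=0$. Far from $D=0$ you need the bound $0\le e^{-Z_1\theta}-e^{-Z_2\theta}\le 1$, which you explicitly discarded.

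\textbf{The fix.} Split into two regimes, as in the proof of Proposition \ref{prop 4.1}. If $D\ge\frac{1}{2}(1+\xi)^2$, then $|g|\le\gamma/\sqrt{D}\le\sqrt{2}\,\gamma/(1+\xi)$. If $0<D<\frac{1}{2}(1+\xi)^2$, then $4\gamma\lambda_j>\frac{1}{2}(1+\xi)^2$, so your bound gives $|g|\le(1+\xi)/(e\lambda_j)<\frac{8}{e}\,\gamma/(1+\xi)$. Hence $\sup_\theta|g|\le\frac{8}{e}\,\gamma/(1+\xi)<3\gamma/(1+\xi)$. The stated constants then follow from $1+\frac{4}{e}<\frac{5}{2}$ and $\frac{8}{e}\lambda_j/(1+\xi)<3\lambda_j/(1+\xi)$.

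Without this split, your argument only gives what Proposition \ref{prop 4.1} literally gives for $D>0$, namely $6\gamma/(1+\xi)$. That yields the constants $4$, $6\gamma/(1+\xi)$ and $6\lambda_j/(1+\xi)$ instead of $\frac{5}{2}$, $3\gamma/(1+\xi)$ and $3\lambda_j/(1+\xi)$. The reason is that the $h_0$-part of \eqref{4.13} and the $D>0$ $h_0$-part of \eqref{4.14} both run through this supremum. You were right that this step needs comment: the paper's proof simply cites Proposition \ref{prop 4.1}, whose statement gives $6$, and the split above is what actually justifies $3$.

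Your ``main obstacle'' is therefore misplaced. The regime $D<0$ near $D=0$ is handled cleanly by $|\sin x|\le x$, as you say. The regime your argument mishandles is $D>0$ far from $0$.
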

	
	We now present estimates for the operator
	$\mathcal R_{\gamma,\xi}(t)$.
	
	\begin{lemma}\label{lem 4.5}
		Let $\xi \geq 0$ and $t>0$. For any $(v,w) \in \mathcal H^\delta$ with $\delta \in \mathbb R$, we have
		\begin{align*}
			&\|\mathcal R_{\gamma,\xi}(t)(v,w)\|_{ \mathcal H^\delta} \leq \frac{\sqrt{53}}{\sqrt{\gamma}}b(t)\|(v,w)\|_{\mathcal H^\delta} \text{ ~if~ } \gamma \in (0,1),\\
			&\| \mathcal R_{\gamma,\xi}(t)(v,w)\|_{\mathcal  H^\delta} \leq \sqrt{53\gamma}b(t)\|(v,w)\|_{\mathcal H^\delta} \text{ ~if~ } \gamma \geq 1.
		\end{align*}
		\begin{proof}
			It follows from \eqref{3.16} that
			\begin{align*}
				\left\| \mathcal R_{\gamma,\xi}(t)(v,w)\right\|^2_{\mathcal H^\delta} = \sum_{j=1}^\infty \left[ \lambda_j^\delta H_{\gamma, \lambda_j,\xi }(t,;v_j,w_j,0)^2 + \lambda_j^{\delta-1} \left[(a \ast H_{\gamma,\lambda_j,\xi }(\cdot,;v_j,w_j,0))'(t)\right]^2 \right].
			\end{align*}
			Further applying Lemma \ref{lem 4.4}, we have for all $\gamma>0$,
			\begin{align*}
				\lambda_j|H_{\gamma,\lambda_j,\xi}(t;h_0,h_1,0)|^2 + \gamma|(a \ast H_{\gamma,\lambda_j,\xi}(\cdot;h_0,h_1,0))'(t)|^2\leq 53 b(t)^2(\lambda_j|h_0|^2 + \gamma|h_1|^2).
			\end{align*}
			Hence, if $\gamma \in (0,1)$, then
			\begin{align*}
				\gamma \|\mathcal R_{\gamma,\xi}(t) (v,w)\|^2_{\mathcal H^\delta} \leq 53 b(t)^2 \sum_{j=1}^{\infty} (\lambda_j^\delta |v_j|^2 + \gamma\lambda_j^{\delta-1} |w_j|^2)\leq 53 b(t)^2 \|(v,w)\|_{\mathcal H^\delta}.
			\end{align*}
			If $\gamma \geq 1$, we get
			\begin{align*}
				\|\mathcal R_{\gamma,\xi}(t) (v,w)\|^2_{\mathcal H^\delta} \leq 53 \gamma b(t)^2 \sum_{j=1}^{\infty} (\lambda_j^\delta|v_j|^2 + \lambda_j^{\delta-1} |w_j|^2)\leq 53 \gamma b(t)^2 \|(v,w)\|_{\mathcal H^\delta}.
			\end{align*}
			The proof is completed.
		\end{proof}
	\end{lemma}
	
	Next, let $N_{\gamma,\xi} = \max \{j \in \mathbb{N}_+: (1+\xi)^2 - 4\gamma(\lambda_j+\xi) > 0\}$ and let $P_{N_{\gamma,\xi}}$ be the projection onto the span of $\{e_j\}_{j \leq N_{\gamma,\xi}}$.
	\begin{lemma}\label{lem 4.6}
		Assume that $(a,b) \in (\mathcal{PC}_\varepsilon)$. Let $\gamma>0$ and $\xi \geq 0$. Fix arbitrary $T>0$. For $\delta \in [0,\frac{1}{2})$, any $N\in \mathbb N_+$ with $N \leq N_{\gamma,\xi}$ and any $w \in L^2([0,T];H^{-1})$, there exists a positive constant $C_{\varepsilon,\delta,T}$ depending only on $\varepsilon$, $\delta$ and $T$, such that
		\begin{align*}
			\left\|\int_0^\cdot \mathcal I_1 \mathcal R_{\gamma,\xi}(\cdot-\zeta) J_\gamma P_Nw(\zeta) \right\|_{L^2([0,T];H^\delta)} \leq C_{\varepsilon,\delta,T} (1+\xi)^{-\frac{1-\delta}{2} } \|P_Nw\|_{L^2([0,T];H^{-1})}.
		\end{align*}
		\begin{proof}
			Write $w(\zeta)=\sum_{j=1}^\infty w_j(\zeta)e_j$. Then from \eqref{3.16} it follows that
			\begin{align*}
				\mathcal I_1\mathcal R_{\gamma,\xi}(t-\zeta) J_\gamma \Xi(\zeta) =\sum_{j=1}^\infty H_{\gamma,\lambda_j,\xi}(t-\zeta; 0, \gamma^{-1}, 0)w_j(\zeta) e_j.
			\end{align*}
			Set $\Xi_{1,\gamma,\xi}(t)= \int_0^t \mathcal I_1\mathcal R_{\gamma,\xi}(t-\zeta) J_\gamma P_Nw(\zeta)\, \mathrm{d}\zeta$. Then
			\begin{align*}
				\begin{aligned}
					\|\Xi_{1,\gamma,\xi}\|^2_{L^2([0,T];H^\delta)}&=\sum_{j=1}^N \lambda_j^\delta \int_0^T  \left|\int_0^t H_ {\gamma,\lambda_j,\xi}(t-\zeta; 0, \gamma^{-1}, 0)w_j(\zeta)\, \mathrm{d}\zeta \right|^2\, \mathrm{d}t\\
					&\leq \sum_{j=1}^N  \lambda_j^\delta\left(\int_0^T |H_ {\gamma,\lambda_j,\xi}(t; 0, \gamma^{-1},0)|\,\mathrm{d}t \right)^2 \int_0^T |w_j(t)|^2\,\mathrm{d}t.
				\end{aligned}
			\end{align*}
			Here the last estimate is obtained via Young's inequality.
			Combining the above with \eqref{4.9} proves the claim.
		\end{proof}
	\end{lemma}
	
	\begin{lemma}\label{lem 4.7}
		Assume that $(a,b) \in (\mathcal{PC}_\varepsilon)$. Let $\gamma>0$ and $\xi \geq 0$. Fix arbitrary $T>0$. For $\delta \in [0,\frac{\varepsilon}{2+\varepsilon})$, any $N\in \mathbb N_+$ with $N \leq N_{\gamma,\xi}$ and any $w \in L^2([0,T];H^{-1})$, there exists a positive constant $C_{\varepsilon,\delta,T}$, depending only on $\varepsilon$, $\delta$ and $T$, such that
		\begin{align*}
			\left\|\int_0^\cdot \mathcal I_2\mathcal R_{\gamma,\xi}(\cdot-\zeta) J_\gamma P_N w(\zeta)\, \mathrm{d}\zeta \right\|_{L^2([0,T];H^{\delta-1} )}\leq C_{\varepsilon,\delta,T}\gamma^{-\frac{1}{2} } (1+\xi)^{-\frac{1-\delta}{2} }\|P_N w\|_{L^2([0,T];H^{-1})}.
		\end{align*}
		\begin{proof}
			Denote $\Xi_{2,\gamma,\xi}(t)= \int_0^t \mathcal I_2\mathcal R_{\gamma,\xi}(t-\zeta) J_\gamma P_N w(\zeta)\, \mathrm{d}\zeta$. Similarly, we have
			\begin{align*}
				\begin{aligned}
					\|\Xi_{2,\gamma,\xi}\|^2_{L^2([0,T];H^{\delta-1})}
					&=\sum_{j=1}^N \lambda_j^{\delta-1} \int_0^T  \left|\int_0^t (a\ast H_ {\gamma,\lambda_j,\xi}(\cdot; 0, \gamma^{-1}, 0))'(t-\zeta)w_j(\zeta)\, \mathrm{d}\zeta \right|^2\, \mathrm{d}t\\
					&\leq \sum_{j=1}^N \lambda_j^{\delta-1} \left(\int_0^T |(a\ast H_ {\gamma,\lambda_j,\xi}(\cdot; 0, \gamma^{-1},0))'(t)|\,\mathrm{d}t \right)^2 \int_0^T |w_j(t)|^2\,\mathrm{d}t.
				\end{aligned}
			\end{align*}
			This together with \eqref{4.10} gives the result.
		\end{proof}
	\end{lemma}
	
	\begin{remark}\label{rem 4.2}
		Assume that  $(a,b) \in (\mathcal{PC}_\varepsilon)$. Let $\gamma>0$ and $\xi \geq 0$. Fix arbitrary $T>0$. For $\delta \in [0,\frac{\varepsilon}{2+\varepsilon})$, any $N \in \mathbb N_+$ with $N>N_{\gamma,\xi}$ and any $w \in L^2([0,T];H^{-1})$, we apply \eqref{4.11} and \eqref{4.12}.
		Following the same argument as in Lemma \ref{lem 4.6}, we can only obtain
		\begin{align*}
			&\quad \left\|\int_0^\cdot \mathcal I_1\mathcal R_{\gamma,\xi}(\cdot-\zeta) J_\gamma (I-P_N) w(\zeta)\, \mathrm{d}\zeta \right\|_{L^2([0,T];H )}\\
			&\leq C_{\varepsilon,\delta,T}\gamma^{\frac{\delta}{2}} (1+\xi)^{-\frac{1+\delta}{2} }\|(I-P_N) w\|_{L^2([0,T];H^{-1})}
		\end{align*}
		and
		\begin{align*}
			&\quad \left\|\int_0^\cdot \mathcal I_2\mathcal R_{\gamma,\xi}(\cdot-\zeta) J_\gamma (I-P_N) w(\zeta)\, \mathrm{d}\zeta \right\|_{L^2([0,T];H^{\delta-1} )}\\
			&\leq C_{\varepsilon,\delta,T}\gamma^{-\frac{1-\delta}{2} } (1+\xi)^{-\frac{1+\delta}{2} }\|(I-P_N) w\|_{L^2([0,T];H^{\delta-1})},
		\end{align*}
		respectively.
	\end{remark}
	
	\section{Moment estimation of stochastic integrals}\label{sec 5}
	For classical stochastic equations without memory effects, such as the stochastic heat and wave equations, solutions are typically studied as $C([0,T];H)$-valued processes. The nonlocal telegraph equation studied in this paper possesses an inherent initial singularity, which may cause the solution to be unbounded near the initial time. We therefore need to develop moment estimates for the solution in the more general space
	$L^2([0,T];H)$. Moreover, owing to the introduction of the new velocity damping mechanism, we are able to extract damping decay factors of higher power in the moment estimates for both the displacement and velocity fields.
	
	Given any progressively measurable self-adjoint process $\Upsilon \in L^2(\Omega, L^2([0,T],\mathcal L(H)))$, we denote
	\begin{align*}
		\Theta_{\gamma,\xi}(t)=\int_0^t \mathcal R_{\gamma,\xi}(t-\zeta)  J_\gamma \Upsilon(\zeta)\, \mathrm{d}W(\zeta).
	\end{align*}
	In certain scenarios, we may take
	$\Upsilon(\zeta)$ to be $\Phi(v(\zeta))$ $\Phi_n(v(\zeta))$, or $\Upsilon(\zeta)= \Phi(v(\zeta))-\Phi_n(v(\zeta))$ for some approximation $\Phi_n$ of $\Phi$.
	
	\begin{lemma}\label{lem 5.1}
		Assume that $(a,b) \in (\mathcal{PC}_\varepsilon)$. Let $\gamma>0$ and $\xi \geq 0$. Fix arbitrary $T>0$.
		\begin{itemize}
			\item[(i)] Suppose that the parameters $\alpha_1$ and $\beta_1$ satisfy \eqref{4.3}.  Then, for each $\delta\in [0, \frac{1}{2}-\alpha_1+2\beta_1)$,
			there exists a positive constant $C_{\alpha_1, \varepsilon, \beta_1, \delta, T}$ depending only on $\alpha_1$, $\varepsilon$, $\beta_1$, $\delta$ and $T$, such that
			\begin{align}\label{5.1}
				\begin{aligned}
					\mathbb E \left[ \int_0^T\|P_{N_{\gamma,\xi} } \mathcal I_1 \Theta_{\gamma,\xi}(t)\|^2_{H^\delta } \, \mathrm{d}t \right] &\leq C_{\alpha_1, \varepsilon, \beta_1, \delta, T} (1+\xi)^{-2(\alpha_1-\beta_1)}\gamma^{\alpha_1-1}\\
					&\quad \times\mathbb E \left[ \int_0^T \|\Upsilon(t)\|^2_{\mathcal L(H)}\, \mathrm{d}t \right].
				\end{aligned}
			\end{align}
			\item[(ii)] Suppose that the parameters $\alpha_2$ and $\beta_2$ satisfy \eqref{4.6}. Then, for each $\delta\in [0, \frac{1}{2}-\alpha_2)$, there exists a positive constant $C_{\alpha_2, \varepsilon, \beta_2, \delta, T}$ depending only on
			$\alpha_2$, $\varepsilon$, $\beta_2$, $\delta$ and $T$, such that
			\begin{align}\label{5.2}
				\begin{aligned}
					\mathbb E \left[ \int_0^T\|(I-P_{N_{\gamma,\xi} })\mathcal I_1 \Theta_{\gamma,\xi}(t)\|^2_{H^\delta } \, \mathrm{d}t \right] &\leq C_{\alpha_2, \varepsilon, \beta_2, \delta, T} (1+\xi)^{-2(\alpha_2+\beta_2)}\gamma^{2\beta_2+\alpha_2-1}\\
					&\quad \times \mathbb E \left[ \int_0^T \|\Upsilon(t)\|^2_{\mathcal L(H)}\, \mathrm{d}t \right].
				\end{aligned}
			\end{align}
		\end{itemize}
		\begin{proof}
			We first consider case (i). By Fubini's theorem and It\^o  isometry, we have
			\begin{align}\label{5.3}
				\begin{aligned}
					&\quad \mathbb E \left[ \int_0^T\|P_{N_{\gamma,\xi} }\mathcal I_1 \Theta_{\gamma,\xi}(t)\|^2_{H^\delta }\, \mathrm{d}t \right] \\
					&=\int_0^T \mathbb E \left[  \left \| \int_0^t  P_{N_{\gamma,\xi} }\mathcal I_1 \mathcal R_{\gamma,\xi}(t-\zeta)  J_\gamma \Upsilon(\zeta)\, \mathrm{d}W(\zeta)\right \|^2_{H^\delta} \right] \, \mathrm{d}t \\
					&=\int_0^T \mathbb E \left[  \int_0^t \left \| P_{N_{\gamma,\xi} } \mathcal I_1 \mathcal R_{\gamma,\xi}(t-\zeta)  J_\gamma \Upsilon(\zeta) \right\|^2_{\mathcal L_2(H,H^\delta)}  \, \mathrm{d}\zeta \right ] \, \mathrm{d}t\\
					&=\mathbb E \left[\int_0^T   \sum_{j=1}^{\infty } \int_0^t \left \|P_{N_{\gamma,\xi} }(-\Delta)^{\frac{\delta}{2}}  \mathcal I_1 \mathcal R_{\gamma,\xi}(t-\zeta)  J_\gamma \Upsilon(\zeta)e_j \right \|^2_{H}  \, \mathrm{d}\zeta \, \mathrm{d}t \right]\\
					&:=\mathbb E\left[  \int_0^T  \mathcal G_{1,\gamma,\xi}(t)  \, \mathrm{d}t\right] .
				\end{aligned}
			\end{align}
			Note that
			\begin{align*}
				\mathcal G_{1,\gamma,\xi}(t)
				&=\sum_{i=1 }^{N_{\gamma,\xi}} \sum_{j=1}^\infty \int_0^t  \left \langle (-\Delta)^{\frac{\delta}{2}} \mathcal I_1 \mathcal R_{\gamma,\xi}(t-\zeta)  J_\gamma \Upsilon(\zeta)e_j, e_i \right \rangle^2  \, \mathrm{d}\zeta\\
				&=\sum_{i=1}^{ N_{\gamma,\xi} } \sum_{j=1}^\infty \int_0^t  \left \langle \Upsilon(\zeta)e_j, J^{\ast}_\gamma \mathcal R^{\ast}_{\gamma,\xi}(t-\zeta)  \mathcal I_1^{\ast} (-\Delta)^{\frac{\delta}{2}} e_i \right \rangle^2  \, \mathrm{d}\zeta.
			\end{align*}
			Since
			\begin{align*}
				\left \langle   J^{\ast}_\gamma \mathcal R^{\ast}_{\gamma,\xi}(t-\zeta)  \mathcal I_1^{\ast} (-\Delta)^{\frac{\delta}{2}} e_i, e_j \right \rangle
				&= \left \langle (-\Delta)^{\frac{\delta}{2}}\mathcal I_1 \mathcal R_{\gamma,\xi}(t-\zeta) J_\gamma  e_i, e_j \right \rangle\\
				&=\begin{cases}
					\lambda_i^{\frac{\delta}{2} } H_{\gamma,\lambda_i,\xi}(t-\zeta;0,\gamma^{-1},0), \quad j=i,\\
					0, \quad j \neq i,
				\end{cases}
			\end{align*}
			we derive
			\begin{align}\label{5.4}
				\mathcal G_{1,\gamma,\xi}(t) = \sum_{i=1}^{ N_{\gamma,\xi} } \sum_{j=1}^\infty \int_0^t  \lambda_i^\delta |H_{\gamma,\lambda_i,\xi}(t-\zeta;0,\gamma^{-1},0)|^2 \left \langle \Upsilon(\zeta)e_j, e_i \right \rangle^2  \, \mathrm{d}\zeta.
			\end{align}
			From the self-adjointness of $\Upsilon$, we obtain
			\begin{align}\label{5.5}
				\begin{aligned}
					\sum_{j=1}^\infty \left\langle \Upsilon(\zeta)e_j,e_i \right\rangle^2 &= \sum_{j=1}^\infty \left\langle \Upsilon(\zeta)e_i,e_j \right\rangle^2
					=\|\Upsilon(\zeta)e_i\|^2_{H} \leq \|\Upsilon(\zeta)\|^2_{\mathcal L(H)} \|e_i\|^2_{ H}\\
					&\leq  \|\Upsilon(\zeta)\|^2_{\mathcal L(H)}.
				\end{aligned}
			\end{align}
			Therefore, we have
			\begin{align}\label{5.6}
				\begin{aligned}
					&\quad \int_0^T \mathcal G_{1,\gamma,\xi}(t)\, \mathrm{d}t\\
					&\leq \int_0^T \int_0^t \|\Upsilon(\zeta)\|^2_{\mathcal L(H)}  \sum_{i=1}^{ N_{\gamma,\xi} } \lambda_i^{\delta} |H_{\gamma,\lambda_i,\xi}(t-\zeta;0,\gamma^{-1},0)|^2\, \mathrm{d}\zeta \mathrm{d}t\\
					&=\int_0^T \|\Upsilon(\zeta)\|^2_{\mathcal L(H)}  \int_{\zeta}^T   \sum_{i=1}^{ N_{\gamma,\xi} } \lambda_i^{\delta} |H_{\gamma,\lambda_i,\xi}(t-\zeta;0,\gamma^{-1},0)|^2\, \mathrm{d}t \mathrm{d}\zeta  \\
					&=\int_0^T \|\Upsilon(\zeta)\|^2_{\mathcal L(H)}  \int_{0}^{T-\zeta}\sum_{i=1}^{ N_{\gamma,\xi} } \lambda_i^{\delta} |H_{\gamma,\lambda_i,\xi}(t;0,\gamma^{-1},0)|^2\, \mathrm{d}t \mathrm{d}\zeta\\
					&\leq \int_0^T \|\Upsilon(t)\|^2_{\mathcal L(H)}\, \mathrm{d}t \int_{0}^{T}   \sum_{i=1}^{ N_{\gamma,\xi} } \lambda_i^{\delta} |H_{\gamma,\lambda_i,\xi}(t;0,\gamma^{-1},0)|^2\, \mathrm{d}t.
				\end{aligned}
			\end{align}
			For  $(1+\xi)^2-4\gamma\lambda_j>0$, it follows from \eqref{4.4} that
			\begin{align*}
				\begin{aligned}
					\int_0^T \sum_{i=1}^{N_{\gamma,\xi}} \lambda_i^{\delta} |H_{\gamma,\lambda_i,\xi}(t;0,\gamma^{-1},0)|^2\, \mathrm{d}t
					\leq C_{\varepsilon,\beta_1,T} (1+\xi)^{-2(\alpha_1-\beta_1)}\gamma^{\alpha_1-1} \sum_{i=1}^{\infty} \lambda_i^{-(1-\alpha_1+2\beta_1-\delta)}.
				\end{aligned}
			\end{align*}
			Combining the above with \eqref{5.3} and \eqref{5.6} yields \eqref{5.1}, since the condition \eqref{4.3} and the choice of $\delta$ imply $\sum_{i=1}^{\infty} \lambda_i^{-(1-\alpha_1+2\beta_1-\delta)}<\infty$.
			
			Now consider case (ii). Similarly,
			\begin{align}\label{5.7}
				\begin{aligned}
					&\quad \mathbb E \left[ \int_0^T\|(I-P_{\gamma,\xi} ) \mathcal I_1 \Theta_{\gamma,\xi}(t)\|^2_{H^\delta }\, \mathrm{d}t \right] \\
					&=\mathbb E \left[\int_0^T   \sum_{j=1}^{\infty } \int_0^t \left \|(I-P_{N_{\gamma,\xi} })(-\Delta)^{\frac{\delta}{2}}  \mathcal I_1 \mathcal R_{\gamma,\xi}(t-\zeta)  J_\gamma \Upsilon(\zeta)e_j \right \|^2_{H}  \, \mathrm{d}\zeta \, \mathrm{d}t\right] \\
					&=:\mathbb E \left[ \int_0^T \mathcal G_{2,\gamma,\xi}(t)  \, \mathrm{d}t\right] .
				\end{aligned}
			\end{align}
			and
			\begin{align}\label{5.8}
				\begin{aligned}
					\int_0^T \mathcal G_{2,\gamma,\xi}(t)\,\mathrm{d}t
					\leq \int_0^T \|\Upsilon(t)\|^2_{\mathcal L(H)}\, \mathrm{d}t \int_{0}^{T}   \sum_{i=N_{\gamma,\xi}+1}^{ \infty } \lambda_i^{\delta} |H_{\gamma,\lambda_i,\xi}(t;0,\gamma^{-1},0)|^2\, \mathrm{d}t.
				\end{aligned}
			\end{align}
			For $(1+\xi)^2 \leq 4\gamma\lambda_j$, it follows from \eqref{4.7} that
			\begin{align*}
				\begin{aligned}
					\int_0^T \sum_{i=N_{\gamma,\xi}+1}^\infty \lambda_i^{\delta} |H_{\gamma,\lambda_i,\xi}(t;0,\gamma^{-1},0)|^2\, \mathrm{d}t
					\leq C_{\varepsilon,\beta_2,T} (1+\xi)^{-2(\alpha_2+\beta_2)}\gamma^{2\beta_2+\alpha_2-1} \sum_{i=1}^{\infty} \lambda_i^{-(1-\alpha_2-\delta)}.
				\end{aligned}
			\end{align*}
			Combining the above with \eqref{5.7} and \eqref{5.8} gives \eqref{5.2} since the condition \eqref{4.6} and the choice of $\delta$ imply $\sum_{i=1}^{\infty} \lambda_i^{-(1-\alpha_2-\delta)}<\infty$.
		\end{proof}
	\end{lemma}
	
	\begin{lemma}\label{lem 5.2}
		Assume that $(a,b) \in (\mathcal{PC}_\varepsilon)$. Let $\gamma>0$ and $\xi \geq 0$. Fix arbitrary $T>0$.
		\begin{itemize}
			\item[(i)] Suppose that the parameters $\alpha_1$ and $\beta_1$ satisfy \eqref{4.3}. Then for each $\delta\in [0, \min\{\frac{1}{2} ,\frac{1}{2}-\alpha_1+2\beta_1\})$, there exists a positive constant $C_{\alpha_1, \varepsilon, \beta_1, \delta, T}$ depending only on
			$\alpha_1$, $\varepsilon$, $\beta_1$, $\delta$ and $T$, such that
			\begin{align}\label{5.9}
				\begin{aligned}
					&\quad \mathbb E \left[ \int_0^T\|P_{N_{\gamma,\xi} } \mathcal I_2 \Theta_{\gamma,\xi}(t)\|^2_{H^{\delta-1} } \, \mathrm{d}t \right]\\ &\leq C_{\alpha_1, \varepsilon, \beta_1, \delta, T} \left[(1+\xi)^{-2(\alpha_1-\beta_1)}\gamma^{\alpha_1-2}+(1+\xi)^{-2\beta_1}\gamma^{2\beta_1-2} \right]\\
					&\quad \times\mathbb E \left[ \int_0^T \|\Upsilon(t)\|^2_{\mathcal L(H)}\, \mathrm{d}t \right].
				\end{aligned}
			\end{align}
			\item[(ii)] Suppose that parameters $\alpha_2$ and $\beta_2$ satisfy \eqref{4.6}. Then for each $\delta\in [0, \frac{1}{2}-\alpha_2)$, there exists a positive constant $C_{\alpha_2, \varepsilon, \beta_2, \delta, T}$ depending only on $\alpha_2$, $\varepsilon$, $\beta_2$, $\delta$ and $T$, such that
			\begin{align}\label{5.10}
				\begin{aligned}
					&\quad \mathbb E \left[ \int_0^T\|(I-P_{N_{\gamma,\xi} })\mathcal I_2 \Theta_{\gamma,\xi}(t)\|^2_{H^{\delta-1} } \, \mathrm{d}t \right]\\ &\leq C_{\alpha_2, \varepsilon, \beta_2, \delta, T} (1+\xi)^{-2(\alpha_2+\beta_2)}\gamma^{2\beta_2+\alpha_2-2}\\
					&\quad \times \mathbb E \left[ \int_0^T \|\Upsilon(t)\|^2_{\mathcal L(H)}\, \mathrm{d}t \right].
				\end{aligned}
			\end{align}
		\end{itemize}
		\begin{proof}
			First prove case (i). As in the derivation of \eqref{5.3} and \eqref{5.6}, it suffices to show
			\begin{align}\label{5.11}
				\begin{aligned}
					&\quad \mathbb E \left[ \int_0^T\|P_{N_{\gamma,\xi} }\mathcal I_2 \Theta_{\gamma,\xi}(t)\|^2_{H^{\delta-1} }\, \mathrm{d}t \right] \\
					&=\int_0^T \mathbb E \left[  \sum_{j=1}^{\infty } \int_0^t \left \|P_{N_{\gamma,\xi} }(-\Delta)^{-\frac{1-\delta}{2}}  \mathcal I_2 \mathcal R_{\gamma,\xi}(t-\zeta)  J_\gamma \Upsilon(\zeta)e_j \right \|^2_{H}  \, \mathrm{d}\zeta \right] \, \mathrm{d}t\\
					&=\mathbb E\left[ \int_0^T \mathcal G_{3,\gamma,\xi}(t)\, \mathrm{d}t \right].
				\end{aligned}
			\end{align}
			and
			\begin{align}\label{5.12}
				\begin{aligned}
					\int_0^T \mathcal G_{3,\gamma,\xi}(t)\,\mathrm{d}t \leq \int_0^T \|\Upsilon(t)\|^2_{\mathcal L(H)}\, \mathrm{d}t \int_{0}^{T}   \sum_{i=1}^{N_{\gamma,\xi}} \lambda_i^{-(1-\delta)} |(a \ast H_{\gamma,\lambda_i,\xi}(\cdot;0,\gamma^{-1},0))'(t)|^2\, \mathrm{d}t.
				\end{aligned}
			\end{align}	
			For $(1+\xi)^2-4\gamma\lambda_j>0$, from \eqref{4.5} we derive
			\begin{align*}
				\begin{aligned}
					&\quad \int_0^T \sum_{i=1}^{N_{\gamma,\xi}} \lambda_i^{-(1-\delta)} |(a\ast H_{\gamma,\lambda_i,\xi}(\cdot;0,\gamma^{-1},0))'(t)|^2\, \mathrm{d}t\\
					&\leq C_{\varepsilon, \beta_1, T} \sum_{i=1}^{\infty} \lambda_i^{-(1-\alpha_1+2\beta_1-\delta)}  (1+\xi)^{-2(\alpha_1-\beta_1)}\gamma^{\alpha_1-2}\\
					&\quad + C_{\varepsilon, \beta_1, T} \sum_{i=1}^{\infty} \lambda_i^{-(1-\delta)}(1+\xi)^{-2\beta_1}\gamma^{2\beta_1-2}.
				\end{aligned}
			\end{align*}
			Combining the above with \eqref{5.11} and \eqref{5.12} gives \eqref{5.9}.
			
			Now consider case (ii). Similarly,
			\begin{align}\label{5.13}
				\begin{aligned}
					&\quad \mathbb E \left[ \int_0^T\|(I-P_{\gamma,\xi} ) \mathcal I_2 \Theta_{\gamma,\xi}(t)\|^2_{H^{\delta-1} }\, \mathrm{d}t \right] \\
					&=\mathbb E \left[\int_0^T   \sum_{j=1}^{\infty } \int_0^t \left \|(I-P_{N_{\gamma,\xi} })(-\Delta)^{-\frac{(1-\delta)}{2}}  \mathcal I_2 \mathcal R_{\gamma,\xi}(t-\zeta)  J_\gamma \Upsilon(\zeta)e_j \right \|^2_{H}  \, \mathrm{d}\zeta \, \mathrm{d}t\right] \\
					&=:\mathbb E \left[ \int_0^T \mathcal G_{4,\gamma,\xi}(t)  \, \mathrm{d}t\right] .
				\end{aligned}
			\end{align}
			and
			\begin{align}\label{5.14}
				\begin{aligned}
					\int_0^T \mathcal G_{4,\gamma,\xi}(t)\,\mathrm{d}t
					\leq \int_0^T \|\Upsilon(t)\|^2_{\mathcal L(H)}\, \mathrm{d}t \int_{0}^{T}   \sum_{i=N_{\gamma,\xi}+1}^{ \infty } \lambda_i^{-(1-\delta)} |(a\ast H_{\gamma,\lambda_i,\xi}(\cdot;0,\gamma^{-1},0))'(t)|^2\, \mathrm{d}t.
				\end{aligned}
			\end{align}
			For $(1+\xi)^2 \leq 4\gamma\lambda_j$, from \eqref{4.8} we obtain
			\begin{align*}
				\begin{aligned}
					&\quad \int_0^T \sum_{i=N_{\gamma,\xi}+1}^\infty \lambda_i^{-(1-\delta)} |(a \ast H_{\gamma,\lambda_i,\xi}(\cdot;0,\gamma^{-1},0))'(t)|^2\, \mathrm{d}t\\
					&\leq C_{\varepsilon,\beta_2,T}\sum_{i=1}^{\infty} \lambda_i^{-(1-\alpha_2-\delta)} (1+\xi)^{-2(\alpha_2+\beta_2)}\gamma^{2\beta_2+\alpha_2-2}.
				\end{aligned}
			\end{align*}
			Combining the above with \eqref{5.13} and \eqref{5.14} gives \eqref{5.10}.
		\end{proof}
	\end{lemma}
	
	\begin{remark}\label{rem 5.1}
		If $(a,b) \in (\mathcal{PC}_{\varepsilon})$ with $\varepsilon>2$, then the constraint $\alpha_1 \in (0,1)$ in Lemma \ref{lem 5.1}(i) and Lemma \ref{lem 5.2}(i)
		can be relaxed to $\alpha_1 \in (0,1]$.
	\end{remark}
	
	For the compactness argument, we need the spatial compactness and temporal regularity conditions below. See Appendix \ref{app B} for proofs.
	\begin{proposition}\label{prop 5.1}
		Assume that $(a,b) \in (\mathcal{PC}_\varepsilon)$, $(\mathcal{M}_2)$ and $(\mathcal M_4)$ hold. Let  $\gamma>0$ and $\xi\geq 0$. Fix arbitrary $T>0$. Then for each $\delta\in [0, \frac{1}{2})$, there exists a positive constant $C_{\gamma,\varepsilon,\delta,T}$ depending only on $\gamma$, $\varepsilon$, $\delta$ and $T$, such that
		\begin{align}\label{5.15}
			\begin{aligned}
				\sup_{\xi \geq 0}\mathbb E \left[ \int_0^T\| \Theta_{\gamma,\xi}(t)\|^2_{\mathcal{H}^\delta } \, \mathrm{d}t \right] \leq C_{\gamma,\varepsilon,\delta,T}
				\left( 1 + \mathbb E \left[ \int_0^T \|\mathcal I_1 Y(t)\|^2_{H}\, \mathrm{d}t \right] \right),
			\end{aligned}
		\end{align}
		provided $Y \in L^2(\Omega,L^2([0,T];\mathcal H))$.
	\end{proposition}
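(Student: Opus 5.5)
The plan is to split $\Theta_{\gamma,\xi}$ into its two components and, within each, into the low modes $P_{N_{\gamma,\xi}}$ and the high modes $I-P_{N_{\gamma,\xi}}$:
\begin{align*}
\|\Theta_{\gamma,\xi}(t)\|^2_{\mathcal H^\delta}
&=\|P_{N_{\gamma,\xi}}\mathcal I_1\Theta_{\gamma,\xi}(t)\|^2_{H^\delta}+\|(I-P_{N_{\gamma,\xi}})\mathcal I_1\Theta_{\gamma,\xi}(t)\|^2_{H^\delta}\\
&\quad+\|P_{N_{\gamma,\xi}}\mathcal I_2\Theta_{\gamma,\xi}(t)\|^2_{H^{\delta-1}}+\|(I-P_{N_{\gamma,\xi}})\mathcal I_2\Theta_{\gamma,\xi}(t)\|^2_{H^{\delta-1}}.
\end{align*}
Here $\Upsilon=\Phi(\mathcal I_1 Y)$. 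This is self-adjoint by $(\mathcal M_2)$, and by $(\mathcal M_4)$ it satisfies
\[
\mathbb E\int_0^T\|\Upsilon\|^2_{\mathcal L(H)}\,\mathrm dt\le 2C_{\Phi,T}^2\Bigl(1+\mathbb E\int_0^T\|\mathcal I_1Y\|_H^2\,\mathrm dt\Bigr),
\]
using $(1+x)^2\le 2(1+x^2)$ and Jensen in $\omega$. So it suffices to bound the four pieces by $C_{\gamma,\varepsilon,\delta,T}\,\mathbb E\int_0^T\|\Upsilon\|^2_{\mathcal L(H)}\,\mathrm dt$, with a constant uniform in $\xi\ge0$. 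Lemmas \ref{lem 5.1} and \ref{lem 5.2} supply exactly these four estimates. The task is to pick $(\alpha_1,\beta_1)$ satisfying \eqref{4.3} and $(\alpha_2,\beta_2)$ satisfying \eqref{4.6} so that every admissible $\delta\in[0,\frac12)$ is covered and every power of $(1+\xi)$ is nonpositive.

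For the high modes (part (ii) of both lemmas) I take $\beta_2=0$ and $\alpha_2=0$. The admissible range is then $\delta\in[0,\frac12)$, the factor $(1+\xi)^{-2(\alpha_2+\beta_2)}$ equals $1$, and the $\gamma$-powers $\gamma^{-1}$ and $\gamma^{-2}$ are absorbed into $C_\gamma$. So \eqref{5.2} and \eqref{5.10} give the required bound directly.

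For the low modes the constraint is $\delta<\frac12-\alpha_1+2\beta_1$. I take $\beta_1=0$ and $\alpha_1>0$ small, for instance $\alpha_1=\frac12(\frac12-\delta)$. Then $0<\alpha_1<1$, $0\le\beta_1<\min\{\alpha_1,\frac{\varepsilon}{2(2+\varepsilon)}\}$, and $\frac12-\alpha_1+2\beta_1=\frac12-\alpha_1>\delta>0$, so \eqref{4.3} holds. Lemma \ref{lem 5.1}(i) then gives the factor $(1+\xi)^{-2\alpha_1}\gamma^{\alpha_1-1}\le\gamma^{\alpha_1-1}$. Lemma \ref{lem 5.2}(i) gives $(1+\xi)^{-2\alpha_1}\gamma^{\alpha_1-2}+\gamma^{-2}$, which is also at most a constant depending only on $\gamma,\delta$. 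Both bounds are uniform in $\xi\ge0$. Summing the four estimates and taking $\sup_{\xi\ge0}$ yields \eqref{5.15}.

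The main obstacle is a technical one: Lemmas \ref{lem 5.1} and \ref{lem 5.2} are stated with $\mathbb E\int_0^T\|\Upsilon\|^2_{\mathcal L(H)}$, while the It\^o isometry behind them needs $\Upsilon\in\mathcal X^{2,2}([0,T];\mathcal L_2)$-type integrability. The paper assumes this in Section \ref{sec 3}, and the identity \eqref{5.5} needs only the operator norm via self-adjointness, so I expect it to go through. I would also check the sign of the exponent $\alpha_1-1$, but $\gamma$ is fixed, so it is harmless.
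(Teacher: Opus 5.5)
Your proposal is correct and follows the paper's proof. Both use the same four-way split into $P_{N_{\gamma,\xi}}$ and $(I-P_{N_{\gamma,\xi}})$ pieces of the two components, bound each piece with Lemmas \ref{lem 5.1} and \ref{lem 5.2}, and close with the linear growth in $(\mathcal M_4)$. The only difference is the parameter choice: the paper takes $\beta_1=\beta_2=\frac{\varepsilon}{4(2+\varepsilon)}$, $\alpha_1=2\beta_1$, $\alpha_2=0$, so $\frac12-\alpha_1+2\beta_1=\frac12$, one choice covers every $\delta\in[0,\frac12)$, and it gives a genuine decay factor $(1+\xi)^{-2\beta_1}$; your $\beta_1=\beta_2=\alpha_2=0$, $\alpha_1=\frac12(\frac12-\delta)$ also satisfies \eqref{4.3} and \eqref{4.6} and suffices for the $\xi$-uniform bound.
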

	
	\begin{proposition}\label{prop 5.2}
		Assume that $(a,b) \in (\mathcal{PC}_\varepsilon)$, $(\mathcal{M}_2)$ and $(\mathcal M_4)$ hold. Let $\gamma >0$ and $\xi \geq 0$. Fix arbitrary $T>0$. Then there exists a positive constant $C_{\gamma,\varepsilon,T}$  depending only on $\gamma$, $\varepsilon$ and $T$, such that
		\begin{align}\label{5.16}
			\mathbb E\left[ \int_0^{T-\varrho} \| \Theta_{\gamma,\xi}(t+\varrho) -\Theta_{\gamma,\xi}(t)\|_{\mathcal H} \, \mathrm{d}t \right] \leq C_{\gamma,\varepsilon,T} o(1) (1+\mathbb E\left[ \|\mathcal I_1Y\|_{L^2([0,T]; H)}^2 \right]).
		\end{align}
		provided $Y \in L^2(\Omega,L^2([0,T];\mathcal H))$,
		where $o(1)$ depends solely on
		$\gamma$, $\xi$ and $\varrho$, and  $o(1) \to 0$ as $\varrho \to 0^+$.
	\end{proposition}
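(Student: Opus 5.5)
Since $\Theta_{\gamma,\xi}$ is built mode by mode, I will write each increment in the eigenbasis and reduce \eqref{5.16} to time-shift continuity of scalar kernels. By Cauchy--Schwarz in $(t,\omega)$ it suffices to bound
\begin{align*}
\mathbb E\Big[\int_0^{T-\varrho}\|\Theta_{\gamma,\xi}(t+\varrho)-\Theta_{\gamma,\xi}(t)\|_{\mathcal H}^2\,\mathrm dt\Big]
\end{align*}
by $o(1)(1+\mathbb E\|\mathcal I_1Y\|^2_{L^2([0,T];H)})$ and then take a square root. Write $K_i(s)=H_{\gamma,\lambda_i,\xi}(s;0,\gamma^{-1},0)$ and $L_i(s)=(a\ast H_{\gamma,\lambda_i,\xi}(\cdot;0,\gamma^{-1},0))'(s)$, both extended by zero for $s<0$. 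The increment splits as
\begin{align*}
\int_0^{t}\big[\mathcal R_{\gamma,\xi}(t+\varrho-\zeta)-\mathcal R_{\gamma,\xi}(t-\zeta)\big]J_\gamma\Upsilon(\zeta)\,\mathrm dW(\zeta)+\int_t^{t+\varrho}\mathcal R_{\gamma,\xi}(t+\varrho-\zeta)J_\gamma\Upsilon(\zeta)\,\mathrm dW(\zeta),
\end{align*}
with $\Upsilon=\Phi(\mathcal I_1Y)$. For each piece I apply the It\^o isometry, the diagonal computation \eqref{5.4} and the self-adjointness bound \eqref{5.5}, exactly as in Lemmas \ref{lem 5.1}--\ref{lem 5.2}. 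After Fubini this bounds the second moment by
\begin{align*}
\mathbb E\Big[\int_0^T\|\Upsilon\|^2_{\mathcal L(H)}\Big]\sum_{i}\Big(\int_0^T|K_i(s+\varrho)-K_i(s)|^2\,\mathrm ds+\lambda_i^{-1}\int_0^T|L_i(s+\varrho)-L_i(s)|^2\,\mathrm ds\Big)
\end{align*}
for the first piece. The second piece gives the same factor times $\sum_i\big(\int_0^\varrho|K_i|^2+\lambda_i^{-1}\int_0^\varrho|L_i|^2\big)$. Finally, $(\mathcal M_4)$ gives $\mathbb E\int_0^T\|\Upsilon\|^2_{\mathcal L(H)}\le C(1+\mathbb E\|\mathcal I_1Y\|^2)$.

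It remains to show that both series tend to $0$ as $\varrho\to0^+$. For each fixed $i$, $K_i$ and $L_i$ lie in $L^2(0,T+1)$ by Lemma \ref{lem 4.2}, so continuity of translation in $L^2$ gives termwise convergence for the first series. For the second series, each term tends to $0$ by absolute continuity of the integral; quantitatively, Lemma \ref{lem 4.2} with horizon $\varrho$ gives constants $C_{\varepsilon,\beta,\varrho}\to0$. To pass the limit through the sums I use dominated convergence over $i$. The dominating summable sequence is $4\int_0^{T+1}(|K_i|^2+\lambda_i^{-1}|L_i|^2)$. By \eqref{4.4}, \eqref{4.5}, \eqref{4.7} and \eqref{4.8} with $\delta=0$, this is bounded for fixed $\gamma$, uniformly in $\xi\ge0$, by $C_{\gamma,\varepsilon}\lambda_i^{-\sigma}$ with $\sigma>1/2$. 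Concretely, in case (i) take $\alpha_1$ small and $\beta_1\in(0,\alpha_1)$, which makes the exponent $1-\alpha_1+2\beta_1>1$. In case (ii) take $\alpha_2<1/2$, which gives exponent $1-\alpha_2>1/2$ for $K_i$ and, after the factor $\lambda_i^{-1}$, exponent $1-\alpha_2>1/2$ for $L_i$. Since $\lambda_i=\pi^2i^2$, the sum $\sum_i\lambda_i^{-\sigma}$ converges.

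The main obstacle is ensuring this domination is valid on both sides of the threshold $N_{\gamma,\xi}$ in a way consistent with how the $o(1)$ is allowed to depend on $(\gamma,\xi,\varrho)$. The statement permits $o(1)$ to depend on $\xi$, so I only need the convergence for fixed $\xi$, and a fixed $\xi$ has finitely many modes below the threshold. On those finitely many modes I use translation continuity directly. The tail, where $(1+\xi)^2\le4\gamma\lambda_i$, is handled by the uniform bounds \eqref{4.7}--\eqref{4.8}.

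A second subtlety is the term $(1+\xi)^{-2\beta_1}\gamma^{2\beta_1-2}$ in \eqref{4.5}, which carries no decay in $\lambda_i$. Multiplied by $\lambda_i^{-1}$ it becomes summable, because $\sum_i\lambda_i^{-1}<\infty$, so the $\mathcal H=H\times H^{-1}$ norm is what makes this work.
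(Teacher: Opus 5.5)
Your proposal is correct and follows the paper's proof essentially step for step. You use the same splitting of the increment into the $[0,t]$ kernel-difference piece and the $[t,t+\varrho]$ piece, the It\^o isometry with the diagonal reduction \eqref{5.4}--\eqref{5.5} and Fubini, and then translation continuity plus dominated convergence over the modes via \eqref{4.4}, \eqref{4.5}, \eqref{4.7}, \eqref{4.8}. As in the paper, the $\lambda_i^{-1}$ weight of $H^{-1}$ absorbs the $\lambda_i$-independent term of \eqref{4.5}; you are more explicit than the paper about this, and about passing from the squared to the unsquared norm by Cauchy--Schwarz.

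One harmless slip: $\beta_1\in(0,\alpha_1)$ does not make $1-\alpha_1+2\beta_1>1$. That needs $\beta_1>\alpha_1/2$; the paper's choice $\alpha_1=2\beta_1$ gives exactly $1$. However, you only need an exponent above $\tfrac12$, and the third condition in \eqref{4.3} already guarantees this.
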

	
	\section{Novel generalized coupling method: weak existence and uniqueness in law}\label{sec 6}
	This section establishes the weak existence and uniqueness in law of mild solution to \eqref{1.2}.
	\begin{theorem}\label{the 6.1}
		Assume that $(a,b) \in (\mathcal{PC}_\varepsilon)$ and $\gamma>0$. $(\mathcal M_1), (\mathcal M_2), (\mathcal M_3), (\mathcal M_4)$ hold. Then, for any initial datum $(u_0, u_1) \in \mathcal H^1$, \eqref{1.2} admits a weak mild solution in the sense of Definition \ref{def 3.1}, where $\delta$ can be chosen arbitrarily close to $\min\{\frac{1}{2},\frac{\varepsilon}{2+\varepsilon}\}$ from below, and the solution is unique in law.
	\end{theorem}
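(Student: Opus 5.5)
The proof splits into two parts: weak existence, by Galerkin/smoothing approximation plus tightness and a Skorokhod argument, and uniqueness in law, by the GC-V generalized coupling. Throughout we work with the lifted mild formulation \eqref{3.17} and with the resolvent estimates of Sections \ref{sec 4}–\ref{sec 5}.

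\textbf{Existence.} We approximate $\Phi$ by Lipschitz (mollified) coefficients $\Phi_n$ satisfying the same bounds $(\mathcal M_2)$, $(\mathcal M_4)$ uniformly in $n$, and $\Psi$ by $\Psi_n$ in the same way. Each approximate equation \eqref{3.17} then has a unique strong mild solution $Y_n$, obtained by a Picard iteration in $\mathcal X^{2,2}([0,T];\mathcal H^\delta)$; the iteration closes on short intervals using Lemma \ref{lem 4.5}, Lemmas \ref{lem 4.6}–\ref{lem 4.7} together with Remark \ref{rem 4.2}, and the fact that $C_{\varepsilon,\beta,T}\to0$ as $T\to0$ in Lemma \ref{lem 4.2}. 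The short intervals are then concatenated using causality $(\mathcal M_1)$.

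Next comes a uniform a priori bound. Proposition \ref{prop 5.1} controls the stochastic term. Lemmas \ref{lem 4.6}–\ref{lem 4.7} control the drift term, with the linear growth $(\mathcal M_4)$ of $\Psi$. The initial term is handled by Lemma \ref{lem 4.5}, since $b\in L^2_{loc}$ and $(u_0,u_1)\in\mathcal H^1$. A Gronwall-type argument on small time steps then gives $\sup_n\mathbb E\|Y_n\|^2_{L^2([0,T];\mathcal H^{\delta})}<\infty$ for every $\delta<\min\{\frac12,\frac{\varepsilon}{2+\varepsilon}\}$.

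Propositions \ref{prop 5.1} and \ref{prop 5.2} give spatial regularity in a higher space together with $L^1$ time-translation equicontinuity. By the Kolmogorov–Riesz (Simon) criterion, the laws of $Y_n$ are therefore tight in $L^2([0,T];\mathcal H^{\delta'})$ for every $\delta'<\delta$. Choosing $\delta$ slightly larger than the target index gives the stated range. Skorokhod's representation theorem yields almost surely convergent copies. We pass to the limit in the mild formulation: the drift converges by continuity of $\Psi$ along subsequences (Borel measurability plus $L^2$ convergence), and the noise term converges via $(\mathcal M_3)$ Hölder continuity and the usual martingale-identification argument for cylindrical Wiener processes. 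This produces a weak mild solution.

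\textbf{Uniqueness in law (the main step).} Fix two weak solutions $Y,\tilde Y$ with the same initial data. Following the generalized coupling idea, we build on one probability space a controlled process $\tilde Y^{\xi}$. It solves the equation driven by the noise of $Y$ but with the enlarged damping $\mathcal A_{\gamma,\xi}$, i.e. the velocity damping $-(1+\xi)w$; this is the GC-V control. The control term is $\xi\,J_\gamma w$-type feedback, written $J_\gamma\Phi(\mathcal I_1\tilde Y^\xi)h$ with $h=\Phi^{-1}(\cdot)(\ldots)$, which is admissible by the right inverse in $(\mathcal M_2)$.

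The difference $Y-\tilde Y^{\xi}$ satisfies a mild equation with the resolvent $\mathcal R_{\gamma,\xi}$. It is estimated by Lemmas \ref{lem 5.1}–\ref{lem 5.2} applied with $\Upsilon=\Phi(\mathcal I_1Y)-\Phi(\mathcal I_1\tilde Y^\xi)$ on the low modes $P_{N_{\gamma,\xi}}$ and on the high modes separately; the projection condition \eqref{2.2} is used to pass between $P_N\Phi$ and $\Phi(P_N\cdot)$. Combined with the Hölder bound in $(\mathcal M_3)$, this gives
\[
\mathbb E\|Y-\tilde Y^\xi\|^2_{L^2H}\lesssim \xi^{-\rho}\,\bigl(\mathbb E\|Y-\tilde Y^\xi\|^2_{L^2H}\bigr)^{\kappa}+\ldots,
\]
with an explicit decay exponent $\rho$ obtained by optimizing $\alpha_i,\beta_i$ subject to \eqref{4.3} and \eqref{4.6}.

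Girsanov's theorem then bounds the total variation distance between the law of $\tilde Y^\xi$ and that of $\tilde Y$ by $\mathbb E\|h\|^2_{L^2}$. Since $h$ is proportional to $\xi$ times the velocity difference, Lemma \ref{lem 5.2} controls it by $\xi^{2}\cdot\xi^{-\rho'}\,(\ldots)^\kappa$. The expected obstacle is exactly this balance: one must choose parameters so that both the coupling distance and the Girsanov cost tend to $0$ as $\xi\to\infty$. This requires the exponent inequality that becomes \eqref{2.1}; it is here that the threshold on $\kappa$, depending on $\varepsilon$, enters, and the optimization over $(\alpha_1,\beta_1,\alpha_2,\beta_2)$ must be carried out carefully, splitting the cases $\varepsilon<2$ and $\varepsilon\ge2$ with Remark \ref{rem 5.1}.

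Finally, a localization by stopping times removes the moment assumptions needed for Novikov's criterion. Letting $\xi\to\infty$ then shows that the laws of $Y$ and $\tilde Y$ agree on every $[0,T]$, which proves uniqueness in law.
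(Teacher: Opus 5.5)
Your existence scheme is essentially the paper's: Lipschitz approximation, uniform moment bounds, tightness from Propositions \ref{prop 5.1}--\ref{prop 5.2}, then Skorokhod. The one exception is the drift issue noted at the end. The genuine gaps are in the uniqueness part.

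\textbf{First gap: the Girsanov step.} Your control is the raw velocity feedback $\xi(w-\tilde w)$, which you remove with $h=\Phi^{-1}(\cdot)\,\xi(w-\tilde w)$. But the velocity component only lives in $H^{\delta-1}$ with $\delta<\frac12$, and Lemma \ref{lem 5.2} controls nothing stronger. So $h\notin L^2([0,T];H)$: it is not a Cameron--Martin shift of the cylindrical Wiener process on $H$, and neither Girsanov nor Pinsker applies. In particular, your bound ``$\xi^2\cdot\xi^{-\rho'}(\ldots)^\kappa$'' from Lemma \ref{lem 5.2} is in the wrong norm. This is exactly the obstruction the paper removes with the regularized control $\xi(I-\epsilon\Delta)^{-1}(w-\tilde w^n)1_{t\le\tau}$ in \eqref{6.14}. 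By \eqref{6.10}, its Girsanov cost is of order $\xi^2\epsilon^{-(1-\delta)}\|\mathcal I_2\bar Y^n\|^2_{L^2([0,\tau];H^{\delta-1})}$. The price is an extra drift $\xi[(I-\epsilon\Delta)^{-1}\mathcal I_2\bar Y^n-\mathcal I_2\bar Y^n]$ in the difference equation \eqref{6.17}. This term is absorbed using \eqref{6.11} together with the deterministic bounds of Lemmas \ref{lem 4.6}--\ref{lem 4.7}. Those bounds hold only on modes $j\le N_{\gamma,\xi}$ (Remark \ref{rem 4.2}), so the paper first proves uniqueness in law of $P_NY$ for each fixed $N$, and only then lets $N\to\infty$ via Proposition \ref{prop 6.3}. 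The threshold \eqref{2.1} on $\kappa$ comes from balancing $1+\xi=(\varDelta_L^n)^{\rho-1}$, $\epsilon=(\varDelta_L^n)^{c}$ and $\beta_1$ as in observation (iv). That is a trade-off involving the regularization parameter, not an optimization of $(\alpha_i,\beta_i)$ against $\xi$ alone.

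\textbf{Second gap: no reference process with known law.} Coupling two arbitrary weak solutions $Y,\tilde Y$ through a controlled process built with the H\"older coefficient $\Phi$ itself is circular. You have no strong existence for the controlled equation when $\Phi$ is only $\kappa$-H\"older, so $\tilde Y^\xi$ cannot be built on the probability space of $Y$. Even if it could, Girsanov would only show that $\tilde Y^\xi$ is \emph{some} weak solution of the uncontrolled equation under the new measure. Identifying its law with $\mathrm{Law}(\tilde Y)$ is precisely the uniqueness in law you are trying to prove. The generalized coupling needs a reference process whose law is already determined. The paper does the following:
\begin{enumerate}
\item It compares an arbitrary weak solution $Y$ with the strong solution $Y^n$ for the Lipschitz approximation $\Phi_n$.
\item It builds $\tilde Y^n$ with $\Phi_n$, so this process exists strongly.
\item It restricts to $A=\{Y\in L_M\}$ and stops at $\tau$, so that the pathwise H\"older estimate \eqref{6.16} holds.
\item It uses the approximation error $\varDelta_L^n\to0$, rather than $\xi^{-1}$ alone, as the small parameter.
\end{enumerate}
This yields $\mathrm{Law}(P_NY)=\lim_n\mathrm{Law}(P_NY^n)$.

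\textbf{Drift in the existence part.} The limit of the drift is not justified. $\Psi$ is only Borel measurable with linear growth, so $\Psi_n(Y_n)\to\Psi(Y)$ does not follow from a.s.\ convergence of $Y_n$; Proposition \ref{prop 6.1} also requires continuity. The drift should instead be removed by Girsanov, using the nondegeneracy in $(\mathcal M_2)$ to reduce to $\Psi\equiv0$, as the paper does.
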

	
	The following Lipschitz approximation result can be found in Ravsky \cite{A. Ravsky}, whose work is discussed in Han \cite[Proposition 3.3]{Y. Han}. We draw on this result to construct our approximating Lipschitz maps.
	
	\begin{proposition}\label{prop 6.1}
		Suppose $V$ is a separable Hilbert space and $V'$ a normed space. For any continuous map $f\colon V\to V'$, one may construct a sequence $\{f_n\}$ of bounded Lipschitz maps $V\to V'$ such that $\{f_n\}$ converge uniformly to $f$ on each compact subset of $V$ as $n \to \infty$.
	\end{proposition}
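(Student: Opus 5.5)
The plan is to reduce to finite dimensions through orthogonal projections, and then to approximate in finite dimensions by bounded piecewise-linear interpolation, which uses only finite convex combinations of values of $f$. The reason for avoiding mollification is that $V'$ is merely normed, so vector-valued integrals might leave $V'$.

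\emph{Step 1 (truncation).} For $n\in\mathbb N_+$ let $\rho_n:V'\to V'$ be the radial retraction $\rho_n(y)=y$ if $\|y\|\le n$ and $\rho_n(y)=ny/\|y\|$ otherwise. It is $2$-Lipschitz and bounded by $n$. Since $f$ is bounded on every compact $K$, we have $\rho_n\circ f=f$ on $K$ once $n>\sup_K\|f\|$. So it suffices to approximate the bounded continuous maps $\rho_n\circ f$, and this fits into a diagonal scheme.

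\emph{Step 2 (finite-dimensional reduction).} Fix an orthonormal basis of $V$ and let $P_m$ be the orthogonal projection onto the span of the first $m$ vectors. For a compact $K\subset V$ we have $\sup_{x\in K}\|P_mx-x\|\to0$, since the $P_m$ are uniformly $1$-Lipschitz and converge pointwise, and $K$ is totally bounded. Consequently $\widetilde K:=K\cup\bigcup_m P_m(K)$ is relatively compact. The map $f$ is uniformly continuous on its closure, hence
\begin{align*}
\sup_{x\in K}\|f(P_mx)-f(x)\|\to0 \quad \text{as } m\to\infty.
\end{align*}
Identifying $P_mV$ with $\mathbb R^m$, set $g_{m,n}:=\rho_n\circ f|_{\mathbb R^m}$. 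This map is continuous, bounded by $n$, and uniformly continuous on the closed ball $\overline B_{n+1}\subset\mathbb R^m$.

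\emph{Step 3 (Lipschitz interpolation in $\mathbb R^m$).} Take the Kuhn (Freudenthal) triangulation of $\mathbb R^m$ with mesh $1/k$. Let $h_{m,n,k}$ be the map that equals $g_{m,n}$ at the vertices and is affine on each simplex. Its values are finite convex combinations of vertex values, so they lie in $V'$ and have norm at most $n$. On each simplex the map is Lipschitz with constant at most $C_m k n$, and being continuous and piecewise affine it is globally Lipschitz. Its deviation from $g_{m,n}$ on $\overline B_{n}$ is at most the modulus of continuity of $g_{m,n}$ on $\overline B_{n+1}$ at scale $\sqrt m/k$. So for each $(m,n)$ we can choose $k=k(m,n)$ making this deviation less than $1/n$.

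\emph{Step 4 (diagonalization).} Define $f_n:=h_{n,n,k(n,n)}\circ P_n$. It is bounded and Lipschitz, as a composition of a $1$-Lipschitz map with a Lipschitz map. Fix a compact $K$ and take $n$ larger than both $\sup_K\|x\|$ and $\sup_{\widetilde K}\|f\|$. Then $P_nK\subset\overline B_n$ and $\rho_n\circ f=f$ on $P_nK$, so
\begin{align*}
\sup_{x\in K}\|f_n(x)-f(x)\|\le \frac1n+\sup_{x\in K}\|f(P_nx)-f(x)\|\to0.
\end{align*}
The main point needing care is Step 3. One must check that the piecewise-affine interpolant is genuinely globally Lipschitz: an explicit gradient bound on each simplex combined with continuity across faces and convexity of segments suffices. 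One must also check that the interpolation error is controlled uniformly on the ball, which follows from the uniform continuity of $g_{m,n}$ on $\overline B_{n+1}$.
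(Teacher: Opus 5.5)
Your proof is correct, and it takes a genuinely different route from the paper. The paper does not prove Proposition~\ref{prop 6.1} at all; it imports the result from Ravsky via Han's Proposition~3.3.

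You instead give a self-contained construction with four ingredients:
\begin{itemize}
\item uniform convergence $P_m\to I$ on compacta, which is the only place the separable Hilbert structure is used;
\item radial truncation $\rho_n$, to obtain boundedness;
\item piecewise-affine interpolation on a Kuhn triangulation of $P_mV\cong\mathbb{R}^m$;
\item a diagonal choice of dimension, truncation level and mesh.
\end{itemize}
Since the approximants are finite convex combinations of values of $f$, you need neither completeness of $V'$ nor vector-valued integration, both of which a mollification argument would require. For complex scalars or $\dim V<\infty$, replace $\mathbb{R}^m$ by $\mathbb{R}^{2m}$ or $\mathbb{R}^{\dim V}$; nothing else changes.

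What the citation buys the paper is brevity. It also comes bundled with additional properties of $\Phi_n$, which the paper asserts immediately afterwards (citing Han's Section~3.2) and uses in the coupling argument:
\begin{itemize}
\item uniform linear growth;
\item uniform nondegeneracy;
\item $\kappa$-H\"older control uniform in $n$ above a scale $d_n\to 0$, with an $O(d_n^\kappa)$ oscillation bound below it.
\end{itemize}
These are not part of the proposition, but your explicit construction makes most of them checkable. Suppose $f$ is globally $\kappa$-H\"older with constant $L$.
\begin{itemize}
\item Then $\rho_n\circ f$ is $\kappa$-H\"older with constant $2L$. This is where the $2$-Lipschitz property of $\rho_n$ genuinely matters; your proof as written only needs its continuity.
\item The interpolation error is at most $2Ld_n^{\kappa}$ everywhere, with $d_n:=\sqrt n/k(n)$.
\item Hence $\|f_n(x)-f_n(y)\|\le 6L\max\{\|x-y\|,d_n\}^{\kappa}$ for all $x,y\in V$.
\item Linear growth survives as well, since the interpolant satisfies $\|h(z)\|\le\max_i\|f(v_i)\|$ over vertices $v_i$ within $d_n$ of $z$.
\end{itemize}

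Nondegeneracy does not come for free, however. A convex combination of operators with uniformly bounded right inverses need not be invertible; think of $\tfrac12 I+\tfrac12(-I)$. In addition, the rescaling in $\rho_n$ inflates the right-inverse bound by the factor $\|f\|/n$ wherever $\|f\|>n$. So if your construction were to replace the citation in Section~6, that property would have to be secured by a separate argument.
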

	
	The above property shows that, for each fixed $T>0$ and any map $\Phi:L^2([0,T];H) \to L^2([0,T];\mathcal L(H))$ satisfying $(\mathcal M_2)$, there exists a sequence of Lipschitz approximations
	$\Phi_n: L^2([0,T];H) \to L^2([0,T];\mathcal L(H))$  that converges uniformly to $\Phi$ on every compact set in $L^2([0,T];H)$. As stated in \cite[Section 3.2]{Y. Han}, for $n$ sufficiently large, $\Phi_n$ remains uniformly nondegenerate, and there exists a constant $C>0$, independent of
	$n$, such that $\Phi_n$ is  $\kappa$-H\"{o}lder continuous whenever
	$\|v_1-v_2\|_{L^2([0,T];H)} >d_n$, where $d_n>0$ satisfies $\lim_{n \to \infty}d_n =0$. Moreover, by employing the affine interpolation, one checks that if $\|v_1-v_2\|_{L^2([0,T];H)} <d_n$, then for some $C>0$ independent of $n$, $\|\Phi_n(v_1)-\Phi_n(v_2)\|_{L^2([0,T];\mathcal L(H)) } \leq C(d_n)^\kappa$. Also, $\Phi_n$ retains the linear growth condition with constant $C'_{\Phi,T}$ independent of $n$:
	\begin{align}\label{6.1}
		\|\Phi_n(v)\|_{L^2([0,T];\mathcal L(H)) } \leq C'_{\Phi,T} (1+\|v\|_{L^2([0,T];H)}) \text{ ~for each~ } n \in \mathbb N_+.
	\end{align}
	
	\subsection{Proof of weak existence}
	For any fixed $T>0$, consider the following SPDE with the initial value
	$((a \ast v)(0), (a \ast w)(0))= (u_0,u_1) \in \mathcal H^{1}$:
	\begin{align}\label{6.2}
		\begin{cases}
			\partial_t (a \ast  v^n) = w^n,  \\
			\partial_t (a \ast  w^n) = \frac{1}{\gamma} \left[\Delta  v^n - (1+\xi)  w^n
			+  \Phi_n(v^n) \frac{\mathrm{d}W}{\mathrm{d}t} \right].
		\end{cases}
	\end{align}
	Since $\Phi_n$ is Lipschitz, the classical Picard iteration suffices to prove the strong existence and uniqueness for \eqref{6.2}. Precisely, for any $\delta \in [0,\frac{1}{2})$, there is a unique progressively measurable $L^2([0,T];\mathcal H^\delta)$-valued process
	$Y^{\xi,n}(t) = (v^{\xi,n}(t),w^{\xi,n}(t)))$ such that $\mathbb{P}$-a.s.,
	\begin{align*}
		\begin{aligned}
			Y^{\xi,n}(t)= \mathcal R_{\gamma,\xi}(t)(v_0,v_1)
			+ \int_0^t \mathcal R_{\gamma,\xi}(t-\zeta) J_\gamma \Phi_n(\mathcal I_1Y^{\xi,n}(\zeta)) \mathrm{d}W(t), \text{ ~a.a.~ } t \in [0,T].
		\end{aligned}
	\end{align*}
	
	Indeed, the sequence of solutions $Y^{\xi,n}(t)$ further satisfies the spatial compactness and temporal regularity conditions stated below.
	
	\begin{proposition}\label{prop 6.2}
		For any arbitrary $\delta \in [0,\frac{1}{2})$, $\gamma>0$ and $T>0$, the following assertions hold.
		\begin{itemize}
			\item[(i)] $\sup_{\xi \geq 0}\sup_{n \in \mathbb N_+} \mathbb E\left[\int_0^T\|Y^{\xi,n}(t)\|^2_{\mathcal H^\delta}\, \mathrm{d}t \right] <\infty$.
			\item[(ii)] For each $\xi \geq 0$, $\lim_{\varrho\to \infty} \sup_{n \in \mathbb N_+} \mathbb E\left[ \int_0^{T-\varrho} \| Y^{\xi,n}(t+\varrho) -Y^{\xi}(t)\|^2_{\mathcal H} \, \mathrm{d}t \right]=0$.
		\end{itemize}
		\begin{proof}
			We first prove (i). We choose the parameters $\beta_1=\beta_2 =\frac{\varepsilon}{4(2+\varepsilon)}$, $\alpha_1=2\beta_1$ and $\alpha_2=0$. Fix arbitrary $T_1 >0$. From \eqref{4.4} and \eqref{4.7}, it follows that for any $\delta \in [0,\frac{1}{2})$, the function $q_1(t):= \sum_{i=1}^{ \infty } \lambda_i^{\delta} |H_{\gamma,\lambda_i,\xi}(t;0,\gamma^{-1},0)|^2$ belongs to $L^1([0,T_1])$ and satisfies
			\begin{align}\label{6.3}
				\int_0^T q_1(t)\, \mathrm{d}t \leq C_{\varepsilon,\delta,T}(1+\xi)^{-2\beta_1}\gamma^{2\beta_1-1}\leq  C_{\gamma,\varepsilon,\delta,T_1} \text{ ~for each~ } T \in (0,T_1].
			\end{align}
			Similarly, \eqref{4.5} and \eqref{4.8} imply that for any
			$\delta \in [0,\frac{1}{2})$, the function
			\begin{align*}
				q_2(t):= \sum_{i=1}^{ \infty } \lambda_i^{\delta-1} |(a\ast H_{\gamma,\lambda_i,\xi}(\cdot;0,\gamma^{-1},0))'(t)|^2
			\end{align*}
			belongs to $L^1([0,T_1])$ and satisfies
			\begin{align}\label{6.4}
				\int_0^T q_2(t)\, \mathrm{d}t \leq C_{\varepsilon,\delta,T}(1+\xi)^{-2\beta_1}\gamma^{2\beta_1-2}\leq  C_{\gamma,\varepsilon,\delta,T_1} \text{ ~for each~ }  T \in (0,T_1].
			\end{align}
			
			Recall from the proof of Lemma \ref{lem 5.1} and from \eqref{6.1} that
			\begin{align}\label{6.5}
				\begin{aligned}
					&\quad  \int_0^T \|\mathcal I_1 \Theta_{\gamma,\xi}(t)\|_{H^\delta}\, \mathrm{d}t  \\
					&\leq  \int_0^T \int_0^t \mathbb \|\Phi_n(\mathcal I_1Y^{\xi,n}(\zeta))\|^2_{\mathcal L(H)}  \sum_{i=1}^{ \infty } \lambda_i^{\delta} |H_{\gamma,\lambda_i,\xi}(t-\zeta;0,\gamma^{-1},0)|^2\, \mathrm{d}\zeta \mathrm{d}t\\
					&=\int_0^T \int_0^t \mathbb \|\Phi_n(\mathcal I_1Y^{\xi,n}(t-\zeta))\|^2_{\mathcal L(H)}  \sum_{i=1}^{ \infty } \lambda_i^{\delta} |H_{\gamma,\lambda_i,\xi}(\zeta;0,\gamma^{-1},0)|^2\, \mathrm{d}\zeta \mathrm{d}t\\
					&=\int_0^T \sum_{i=1}^{ \infty } \lambda_i^{\delta} |H_{\gamma,\lambda_i,\xi}(\zeta;0,\gamma^{-1},0)|^2  \int_0^{T-\zeta} \mathbb \|\Phi_n(\mathcal I_1Y^{\xi,n}(t))\|^2_{\mathcal L(H)} \, \mathrm{d}t \mathrm{d}\zeta\\
					&\leq C'_{\Phi,T_1} \int_0^T \sum_{i=1}^{ \infty } \lambda_i^{\delta} |H_{\gamma,\lambda_i,\xi}(\zeta;0,\gamma^{-1},0)|^2  (1+\|\mathcal I_1Y^{\xi,n}\|_{L^2([0,T-\zeta];H)}) \, \mathrm{d}\zeta.
				\end{aligned}
			\end{align}
			To justify the last step in the above, we note that for any
			$\zeta_1 \in [0,T]$ and $T \in (0,T_1]$,
			\begin{align*}
				\|\Phi_n(\mathcal I_1 Y^{\xi,n}(t) )\|_{L^2([0,\zeta_1];\mathcal L(H)) } &=\left\|\Phi_n \left(\mathcal I_1 Y^{\xi,n}(t)  1_{t \in [0,\zeta_1]} \right) \right\|_{L^2([0,\zeta_1];\mathcal L(H)) } \\
				&\leq \left\|\Phi_n \left(\mathcal I_1 Y^{\xi,n}(t)  1_{t \in [0,\zeta_1]} \right) \right\|_{L^2([0,T_1];\mathcal L(H))}\\
				&\leq C'_{\Phi,T_1} (1+\|\mathcal I_1Y^{\xi,n}(t)\|_{L^2([0,\zeta_1];H)}).
			\end{align*}
			This proves the desired inequality between the fourth and fifth terms. Next, applying Lemma \ref{lem 4.5} together with \eqref{6.3} and \eqref{6.5}, we obtain
			\begin{align}\label{6.6}
				\begin{aligned}
					&\quad \mathbb E\left[\|\mathcal I_1 Y^{\xi,n} \|_{L^2([0,T];H^{\delta})}  \right] \\
					&\leq C_\gamma \|b\|^2_{L^2([0,T_1])} \|(v_0,v_1)\|_{\mathcal H^1} + C'_{\Phi,T_1}C_{\gamma,\varepsilon,\delta,T_1}\\
					&\quad + C'_{\Phi,T_1} \int_0^T q_1(T-\zeta)  \mathbb E\left[\|\mathcal I_1Y^{\xi,n}\|_{L^2([0,\zeta];H)} \right]\, \mathrm{d}\zeta
				\end{aligned}
			\end{align}
			uniformly for all $n \in \mathbb N_+$. Similarly, from the proof of Lemma \ref{lem 5.2} and \eqref{6.1} we get an estimate like \eqref{6.5}. Combining it with Lemma \ref{lem 4.5} and \eqref{6.4}, then taking expectations, we obtain
			\begin{align}\label{6.7}
				\begin{aligned}
					&\quad \mathbb E\left[\|\mathcal I_2 \Theta_{\gamma,\xi}(t)\|_{L^2([0,T];lH^{\delta-1})} \right] \\
					&\leq C'_{\Phi,T_1} \int_0^T \sum_{i=1}^{ \infty } \lambda_i^{\delta} |(a\ast H_{\gamma,\lambda_i,\xi}(\cdot;0,\gamma^{-1},0))'(T-\zeta)|^2  (1+\mathbb E\left[\|\mathcal I_1Y^{\xi,n}\|_{L^2([0,\zeta];H)} \right] \, \mathrm{d}\zeta.
				\end{aligned}
			\end{align}
			Next, \eqref{6.4}, and \eqref{6.7} give
			\begin{align}\label{6.8}
				\begin{aligned}
					&\quad \mathbb E\left[\|\mathcal I_2 Y^{\xi,n} \|_{L^2([0,T];H^{\delta-1})}  \right] \\
					&\leq C_\gamma \|b\|^2_{L^2([0,T_1])} \|(v_0,v_1)\|_{\mathcal H^1} +C'_{\Phi,T_1} C_{\gamma,\varepsilon,\delta,T_1}\\
					&\quad+ C'_{\Phi,T_1} \int_0^T q_2(T-\zeta)  \mathbb E\left[\|\mathcal I_1Y^{\xi,n}\|_{L^2([0,\zeta];H)} \right]\, \mathrm{d}\zeta
				\end{aligned}
			\end{align}
			uniformly for all $n \in \mathbb N_+$.
			
			Let $p(T):= \mathbb E\left[\|Y^{\xi,n} \|_{L^2([0,T];\mathcal H^\delta)}  \right]$  be a nonnegative continuous function, and define $q(t):=q_1(t) +q_2(t)$. Let $C:=\max\{C'_{\Phi,T}, S_1+S_2\}$, where $S_1$ and $S_2$ denote the sums of the first two terms on the right-hand sides of \eqref{6.6} and \eqref{6.8}, respectively.
			In view of the compact embedding
			$H^\delta \subset\subset H$, from \eqref{6.6} and \eqref{6.8} we obtain
			\begin{align*}
				p(T)\leq C + \int_0^T Cq(T-\zeta) p(\zeta) \, \mathrm{d}\zeta,  \quad T\geq 0.
			\end{align*}
			Denote by $Q\in L^2([0,T_1])$ the resolvent kernel of
			$Cq$(see \cite[Chapter 3]{G. Gripenberg}), that is, $Q(T)$ is nonnegative solves
			\begin{align*}
				Q(T) = C(q \ast Q)(T) +Cq(T), \quad \text{ ~a.a.~} T \in [0,T_1].
			\end{align*}
			Applying the comparison principle yields
			\begin{align*}
				p(T) \leq C(1+ \|Q\|_{L^1([0,T])}), \text{ ~for~ } T \in [0,T_1]
			\end{align*}
			uniformly in $\xi$ and  $n$. This proves assertion (i).
			
			We now turn to (ii). The strong continuity of the translation operators implies that, for each
			$\xi \geq 0$,
			\begin{align*}
				\lim_{\varrho\to 0^+}\int_0^{T-\varrho} \left\| (\mathcal R_{\gamma,\xi}(t+\varrho)-\mathcal R_{\gamma,\xi}(t))(v_0,v_1) \right\|^2_{\mathcal H^\delta } \mathrm{d}t =0.
			\end{align*}
			Combining the above with Proposition \ref{prop 5.2} and assertion (i) yields (ii).
		\end{proof}
	\end{proposition}
	
	Let $Y^n(t) = Y^{0,n}(t)$ be the solution sequence. For each $\delta \in [0,\frac{1}{2})$, there exists $\delta' \in (\delta,\frac{1}{2})$ such that $H^{\delta'}\subset \subset H^{\delta} \subset H$. From Proposition \ref{prop 6.2} and Markov's inequality we obtain that for each $\sigma>0$, there exists a compact set $K_\sigma\subset L^2([0,T];\mathcal H^{\delta})$ defined by
	\begin{align*}
		K_\sigma:=\left\{Y: \|Y\|_{L^2([0,T];\mathcal H^{\delta'})}\leq 	\frac{C}{\sigma}, \int_0^{T-\varrho} \| Y(t+\varrho) -Y(t)\|_{\mathcal H} \, \mathrm{d}t\leq o(1) \right\},
	\end{align*}
	where $o(1) \to 0$ as $\varrho \to 0^+$ and $C$ is some constant, such that $\mathbb P(Y^n \in K_\sigma) \geq 1-\sigma$ for each $n$. An application of Prokhorov's theorem shows that the sequence of laws $\mathrm{Law}(Y^n)$ on $L^2([0,T];\mathcal H^{\delta})$  is relatively compact with respect to the topology of weak convergence.
	Consequently, there exists a subsequence, again denoted by $Y^n$, converging weakly to some probability measure $\mu$ on $L^2([0,T],\mathcal H^{\delta})$.  Finally, invoking the Skorokhod representation theorem together with the continuity of $\Phi$, we prove that for any $\delta \in[0,\frac{1}{2})$, there exists a progressively measurable process $Y \in L^{2}([0,T];\mathcal H^{\delta})$ which is a weak mild solution in the sense of Definition \ref{def 3.1}.
	
	\subsection{Proof of uniqueness in law}
	The proof of uniqueness in law is inspired by the approaches in \cite{Y. Han} and \cite{A. Kulik}. In our treatment, however, we introduce a completely different framework, the GC-V method, for which the stochastic control term, the stopping time construction, and the tail probability estimates all have to be devised anew. In addition, the simple velocity-damping coupling construction $\xi(\mathcal I_2Y(t)-\mathcal I_2\tilde Y^n(t))$ gives rise to the following obstruction:
	since the velocity field does not lie in the underlying space $H$ of the cylindrical Wiener process $W(t)$, the Girsanov transform cannot be applied to eliminate this coupling term, and consequently the distribution distance between the approximating process and the original process cannot be obtained.
	However, we overcome this obstacle by constructing a regularized velocity-damping control term (see \eqref{6.14}). But a new issue then arises: in estimating the extra error term
	\begin{align*}
		\xi(I-\epsilon \Delta)^{-1}  \mathcal I_2 \bar{Y}^n(\zeta)-\mathcal I_2 \bar{Y}^n(\zeta),
	\end{align*}
	how can one suppress the blow-up effect of $\xi$ as $\xi\to \infty$?(see \eqref{6.17}). We resolve this by means of an interpolation iteration strategy (see \eqref{6.19}), which achieves an improvement in the power-law exponent with respect to the factor $1+\xi$ and thereby successfully suppresses the blow-up, enabling the key tail probability estimate (see \eqref{6.20}).
	It is worth pointing out that the regularizing effect of the operator $\mathcal R_{\gamma,\xi}(t) J_\gamma (I-P_{N_{\gamma,\xi}})$ is relatively insufficient to support the implementation of the interpolation iteration strategy (see Remark \ref{rem 4.2}). To avoid this issue, we first apply the interpolation iteration strategy to the projection term $P_N(Y(t)-\tilde{Y}^n(t))$. After proving uniqueness in law for all finite-dimensional projections of the weak mild solution, we let $N \to \infty$ to obtain uniqueness in law for the weak mild solution itself.
	
	We first make several important observations.
	\begin{itemize}
		\item[(i)] In view of the nondegeneracy condition $(\mathcal M_2)$ on the diffusion coefficient
		$\Phi$ and the linear growth assumption on the drift $\Psi$,  it follows that if uniqueness in law for \eqref{1.2} can be established in the case $\Psi\equiv0$, then the same property holds for the general case by virtue of Girsanov's theorem. Hence, in what follows, it suffices to consider the case $\Psi\equiv0$. The Girsanov transform for infinite-dimensional SPDEs is discussed in detail in \cite[Remark 2.1]{F. Masiero}; see also the references therein.
		\item[(ii)] We set the parameters as
		$\beta_1=\beta_2 \in (0,\frac{\varepsilon}{2(2+\varepsilon)})$, $\alpha_1=2\beta_1$ and $\alpha_2=0$. Then, by Lemma \ref{lem 5.1} and Lemma \ref{lem 5.2}, for any $\gamma>0$, $T>0$ and $\delta \in [0,\frac{1}{2})$, there exists a positive constant $C_{\gamma, \varepsilon, \beta_1, \delta, T}$ depending only on $\gamma$, $\varepsilon$, $\beta_1$, $\delta$ and $T$, such that
		\begin{align}\label{6.9}
			\begin{aligned}
				\mathbb E \left[\int_0^T\|  \Theta_{\gamma,\xi}(t)\|^2_{\mathcal{H}^\delta} \, \mathrm{d}t \right] \leq C_{\gamma, \varepsilon, \beta_1,\delta,T} (1+\xi)^{-2\beta_1}
				\mathbb E \left[ \int_0^T \|\Upsilon(t)\|^2_{\mathcal L(H)}\, \mathrm{d}t \right].
			\end{aligned}
		\end{align}
		\item[(iii)] For each $\epsilon>0$ and each $\delta \in (0,1)$, we have
		\begin{align}
			&\|(I-\epsilon \Delta)^{-1} f\|_{H} \leq \epsilon^{ -\frac{1-\delta}{2}  } \|f\|_{H^{\delta-1}},  \label{6.10}\\
			& \|(I-\epsilon \Delta)^{-1} f-f\|_{H^{-1}} \leq \epsilon^{ \frac{\delta}{2} }    \|f\|_{H^{\delta-1}}. \label{6.11}
		\end{align}
		\item[(iv)] Let $\delta \in (0, \min\{\frac{1}{2},\frac{\varepsilon}{2+\varepsilon} \})$ and $\rho \in (\frac{(1-\delta)^2}{1+\delta^2},1)$.
		Since
		\begin{align*}
			\kappa>\begin{cases}
				1-\frac{\varepsilon^2}{2\varepsilon^2+4\varepsilon+4}, \text{ ~if~ } \varepsilon \in (0,2), \\
				1-\frac{2\varepsilon}{5(2+\varepsilon)}, \text{ ~if~ } \varepsilon \in [2,\infty).
			\end{cases}
		\end{align*}
		if $\varepsilon \in (0,2)$ , choose
		$\delta$ close to $\frac{\varepsilon}{2+\varepsilon}$, and $\rho$ close to $\frac{2}{\varepsilon^2+2\varepsilon+2}$; if $\varepsilon \in [2,\infty)$, choose $\delta$ close to $\frac{1}{2}$ and $\rho$ close to $\frac{1}{5}$. Then choose $\beta_1$ close to $\frac{\varepsilon}{2(2+\varepsilon)}$ such that $\beta_1(1-\rho) +\kappa = 1+2\chi_1>1 $ for some $\chi_1>0$. Moreover, $\rho \in (\frac{(1-\delta)^2}{1+\delta^2},1)$ implies that
		\begin{align*}
			\frac{\rho}{1-\delta}>\frac{(1-\delta)(1-\rho)}{2\delta}.
		\end{align*}
		We can then choose $c\in(\frac{(1-\delta)(1-\rho)}{\delta} ,\frac{2\rho}{1-\delta} ) $ such that
		\begin{align*}
			&1-\frac{(1+\delta)(1-\rho)}{2}+ \frac{c \delta}{2}=1+2\chi_2>1 \text{ ~for some~ } \chi_2>0, \\
			&\rho-\frac{c(1-\delta)}{2}=\chi_3>0.
		\end{align*}
	\end{itemize}
	
	Throughout this section, the parameters are chosen as in (iv), and we will not state this again.
	
	We first prove that, for any weak mild solution $Y(t)$ of \eqref{1.2}, the projection $P_N Y(t)$ is unique in law for each $N \in \mathbb N_+$. Consider the following SPDEs with the same initial value $((a \ast v)(0), (a \ast w)(0))= (u_0,u_1) \in \mathcal H^{1}$:
	\begin{align}
		&\begin{cases}
			\partial_t (a \ast v) =w,  \\
			\partial_t (a \ast w) = \frac{1}{\gamma} \left[\Delta  v -w
			+ \Phi(v) \frac{\mathrm{d}W}{\mathrm{d}t} \right],
		\end{cases}\label{6.12}\\
		&\begin{cases}
			\partial_t (a \ast  v^n) = w^n,  \\
			\partial_t (a \ast  w^n) = \frac{1}{\gamma} \left[\Delta  v^n -  w^n
			+  \Phi_n(v^n) \frac{\mathrm{d}W}{\mathrm{d}t} \right],
		\end{cases}\label{6.13}\\
		&\begin{cases}
			\partial_t (a \ast  \tilde{v}^n) = \tilde{w}^n  \\
			\partial_t (a \ast \tilde{w}^n) = \frac{1}{\gamma} \left[\Delta  \tilde{v}^n -  \tilde{w}^n
			+ \xi(I-\epsilon \Delta)^{-1} ( w- \tilde{w}^n)1_{t \leq \tau} + \Phi_n(\tilde{v}^n) \frac{\mathrm{d}W}{\mathrm{d}t} \right],
		\end{cases}\label{6.14}
	\end{align}
	where $\tau$  is some stopping time adapted to the filtration $(\mathcal F_t)_{t\geq 0}$, $\xi$ is the damping factor, and
	$\epsilon$ and $\xi$ will be specified subsequently.
	
	Since $\Phi_n$ is Lipschitz continuous, the strong existence and uniqueness for \eqref{6.14} can be proved. Denote the mild solutions of \eqref{6.13} and \eqref{6.14} by $Y^n(t) = (v^n(t), w^n(t))$ and $\tilde{Y}^n(t)=(\tilde{v}^n(t),\tilde{w}^n(t))$, respectively. Consider the compact set in
	$L^2([0,T];\mathcal H)$:
	\begin{align*}
		L_M:=\left\{Y: \|Y\|_{L^2([0,T];\mathcal H^{\delta})}\leq 	M, \int_0^{T-\varrho} \| Y(t+\varrho) -Y(t)\|_{\mathcal H} \, \mathrm{d}t\leq o(1) \right\},
	\end{align*}
	where $M>0$ and $o(1) \to 0$ as $\varrho \to 0^+$. Define
	\begin{align*}
		\varDelta_L^n :=\max\{d_n, \sup_{u \in L_M} \|\Phi(u)-\Phi_n(u)\|_{L^2([0,T];\mathcal L(H))} \}.
	\end{align*}
	Clearly, $\lim_{n\to \infty} \varDelta_L^n = 0$. Hence, for all sufficiently large $n$, it holds that $\varDelta_L^n \in [0,1)$. Let $A=\{\omega:Y_{[0,T]} \in L_M  \}\in \mathcal F_T$. By the a priori estimates from Proposition \ref{prop 5.1} and Proposition \ref{prop 5.2}, along with Markov's inequality, one has $\mathbb P(A) \to 0$ as $M \to \infty$. Take $M$
	large enough that $\mathbb P(A) \geq \frac{1}{2}$. Define the stopping time
	\begin{align*}
		\tau:=\inf\{t\geq 0: \|P_NY-P_N\tilde{Y}^n\|_{L^2([0,t];\mathcal H^\delta)} \geq 2\varDelta_L^n \} \wedge T.
	\end{align*}
	Set $1+\xi= (\varDelta_L^n)^{\rho-1}$ and $\epsilon=(\Delta_L^n)^{c}$.
	
	We employ the Girsanov transform to estimate $P_NY(t)-P_N\tilde{Y}^n(t)$. Let $d_{TV}$ denote the total variation distance on $\mathcal P(L^2([0,T];\mathcal H))$, and let
	$\mathbb H_{rel}(\cdot|\cdot)$ be the relative entropy between two probability measures in $\mathcal P(L^2([0,T];\mathcal H))$. By Pinsker's inequality,
	\begin{align*}
		d_{TV}(\mathrm{Law}(P_NY^n|_{[0,T]}),\mathrm{Law}(P_N\tilde{Y}^n|_{[0,T]})) \leq\sqrt{2 \mathbb H_{rel}(\mathrm{Law}(P_NY^n|_{[0,T]})|\mathrm{Law}(P_N\tilde{Y}^n|_{[0,T]}))}.
	\end{align*}
	Note that the SPDE for $P_N\tilde{Y}^n$
	is obtained from that for $P_NY^n$ by adding a drift to the Brownian paths, with the corresponding path transformation
	\begin{align*}
		(\mathrm{d}W(t))_{ t \geq 0} \to (\mathrm{d}W(t) + \xi\Phi_n(t,v_n(t))^{-1}  (I-\epsilon \Delta)^{-1 }(P_Nw(t) - P_Nw^n(t)) 1_{ t \leq \tau}\mathrm{d}t).
	\end{align*}
	Since $(\Phi_n)^{-1}$  is bounded, it follows from \eqref{6.10} that
	\begin{align}\label{6.15}
		d_{TV}(\mathrm{Law}(P_NY^n|_{[0,T]}),\mathrm{Law}(P_N\tilde{Y}^n|_{[0,T]})) \lesssim (\varDelta_L^n)^{\rho-\frac{c(1-\delta)}{2}}.
	\end{align}
	
	We now turn to the estimation of $P_NY(t) -P_N\tilde{Y}^n(t)$  on the sub-probability space $(A,\mathcal F_T|_A,\mathbb P(\cdot|A))$ via a pathwise argument. By \eqref{2.2}, on
	$A$, whenever $t \in [0,\tau]$ and $\varDelta_L^n \leq 1$,
	\begin{align}\label{6.16}
		\begin{aligned}
			&\quad \|P_N\Phi(v)-P_N\Phi_n(\tilde{v}^n)\|_{L^2([0,t];\mathcal L(H))}\\
			&\lesssim\|\Phi(P_N v)-\Phi_n(P_N\tilde{v}^n)\|_{L^2([0,t];\mathcal L(H))}\\
			&\leq \|\Phi(P_N v)-\Phi_n(P_N v)\|_{L^2([0,t];\mathcal L(H))}+ \|\Phi_n(P_N v)-\Phi_n(P_N\tilde{v}^n)\|_{L^2([0,t];\mathcal L(H))}\\
			&\leq \varDelta_L^n+C_{\Phi,T}(\varDelta_L^n)^{\kappa} \lesssim C_{\Phi,T} (\varDelta_L^n)^\kappa.
		\end{aligned}
	\end{align}
	Here, the second term in the third line is estimated as follows. If $\|v-\tilde{v}^n\|_{L^2([0,t];H)} \leq d_n$, then, since $d_n \leq \varDelta_L^n$,
	\begin{align*}
		\|\Phi_n(v) -\Phi_n(\tilde{v}^n)\|_{L^2([0,t];H)} \lesssim (d_n)^{\kappa}\leq (\varDelta_L^n)^\kappa.
	\end{align*}
	If $ \|v-\tilde{v}^n\|_{L^2([0,t];H)} \in (d_n,\varDelta_L^n]$, then the uniform H\"{o}lder continuity of $\Phi_n$(in $n$) on large scales gives
	\begin{align*}
		\|\Phi_n(v) -\Phi_n(\tilde{v}^n)\|_{L^2([0,t];H)} \lesssim \|v-\tilde{v}^n\|^\kappa_{L^2([0,t];H)} \leq  (\varDelta_L^n)^\kappa.
	\end{align*}
	
	Denote $P_NY(t) -P_N\tilde{Y}^n(t) = P_N\bar{Y}^n(t) = (P_N\bar{v}^n(t),P_N\bar{w}^n(t))$. Then, for a.a. $t \in [0,\tau]$, $P_N\bar{Y}^n(t)$ is the mild solution of the following SPDE:
	\begin{align*}
		\begin{cases}
			\partial_t (a \ast P_N\bar{v}^n) =P_N \bar{w}^n  \\
			\partial_t (a \ast P_N\bar{w}^n) = \frac{1}{\gamma} \left[\Delta P_N\bar{v}^n -(1+\xi)P_N\bar{w}^n
			+P_N\Phi_n(\bar{v}^n) \frac{\mathrm{d}W}{\mathrm{d}t} \right]\\
			\quad\quad\quad\quad\quad \ +\frac{\xi}{\gamma} \left[(I-\epsilon \Delta)^{-1} (P_N w-P_N \bar{w}^n)- (P_N w-P_N \bar{w}^n) \right].
		\end{cases}
	\end{align*}
	Therefore, if $\omega\in A$, for a.a. $t \in (0,\tau]$,
	\begin{align}\label{6.17}
		\begin{aligned}
			&\quad P_NY(t) -P_N\tilde{Y}^n(t)\\
			&= \int_0^t \mathcal R_{\gamma,\xi}(t-\zeta)J_{\gamma}\left[P_N\Phi(\mathcal I_1 Y(\zeta))-P_N\Phi_n(\mathcal I_1 \tilde{Y}^n(\zeta) ) \right] \, \mathrm{d}W(\zeta)\\
			&\quad + \xi\int_0^t \mathcal R_{\gamma,\xi}(t-\zeta)J_{\gamma}P_N \left[(I-\epsilon \Delta)^{-1}  \mathcal I_2 \bar{Y}^n(\zeta)-\mathcal I_2 \bar{Y}^n(\zeta) \right]\, \mathrm{d}\zeta.
		\end{aligned}
	\end{align}
	From the conditional expectation formula
	$\mathbb E_{\mathbb P(\cdot|A)}[\cdot] = \frac{\mathbb E[\cdot 1_A]}{\mathbb P(A)}$, $\mathbb{P}(A) \geq \frac{1}{2}$, and \eqref{6.9}, \eqref{6.16} we get that for $t \in [0,\tau]$,
	\begin{align}\label{6.18}
		\begin{aligned}
			&\quad \mathbb E_{\mathbb P(\cdot|A) }\left[ \left\|\int_0^\cdot \mathcal R_{\gamma,\xi}(\cdot-\zeta)J_{\gamma}\left[P_N \Phi(\mathcal I_1 Y(\zeta))-P_N \Phi_n(\mathcal I_1 \tilde{Y}^n(\zeta) ) \right] \, \mathrm{d}W(\zeta) \right\|^2_{L^2([0,t];\mathcal H^\delta)} \right]\\
			&\leq \frac{1}{\mathbb P(A)} \mathbb E\left[ \left\|\int_0^\cdot \mathcal R_{\gamma,\xi}(\cdot-\zeta)J_{\gamma}\left[P_N \Phi(\mathcal I_1 Y(\zeta))-P_N \Phi_n(\mathcal I_1 \tilde{Y}^n(\zeta) ) \right] \, \mathrm{d}W(\zeta) \right\|^2_{L^2([0,t];\mathcal H^\delta)} \right]\\
			&\lesssim (\varDelta_L^n)^{ 2\beta_1(1-\rho)} \|P_N \Phi(\mathcal I_1 Y)-P_N \Phi_n(\mathcal I_1 \tilde{Y}^n )\|_{L^2([0,t];\mathcal L(H))} \\
			&\lesssim \mathbb  (\varDelta_L^n)^{2\beta_1(1-\rho) +2\kappa}.
		\end{aligned}
	\end{align}
	Note that, for each fixed $N$, when $n$ is sufficiently large, $\xi$ is also chosen large enough so that $N \leq N_{\gamma,\xi}$. Consequently, applying Lemma \ref{lem 4.6}, Lemma \ref{lem 4.7}, and \eqref{6.11} yields that for
	$t \in [0,\tau]$,
	\begin{align}\label{6.19}
		\begin{aligned}
			&\quad \xi^2\mathbb E_{\mathbb P(\cdot|A) } \left[ \left\|\int_0^{\cdot} \mathcal R_{\gamma,\xi}(\cdot-\zeta)J_{\gamma}P_N \left[(I-\epsilon \Delta)^{-1} \mathcal I_2  \bar{Y}^n(\zeta)-\mathcal  I_2 \bar{Y}^n(\zeta) \right]\, \mathrm{d}\zeta \right\|^2_{L^2([0,t];\mathcal H^\delta) } \right]\\
			&\lesssim (1+\xi)^{1+\delta} \|(I-\epsilon \Delta)^{-1} \mathcal I_2 P_N \bar{Y}^n-\mathcal  I_2 P_N\bar{Y}^n\|^2_{L^2([0,t];H^{-1})} \\
			&\lesssim (\varDelta_L^n)^{-(1+\delta)(1-\rho)+c\delta  } \|\mathcal  I_2 P_N\bar{Y}^n\|^2_{L^2([0,t];H^{\delta-1})}\\
			&\lesssim
			(\varDelta_L^n)^{2-(1+\delta)(1-\rho)+ c\delta }.
		\end{aligned}
	\end{align}
	This, together with \eqref{6.18}, shows that for $t \in [0,\tau]$,
	\begin{align*}
		\mathbb E_{\mathbb P(\cdot|A) }\left[\|P_NY(t) -P_N\tilde{Y}^n(t)\|^2_{L^2([0,t];\mathcal H^\delta)}\right] \leq C (\varDelta_L^n)^{1+ 2\min\{\chi_1,\chi_2\} },
	\end{align*}
	provided $\varDelta_L^n \leq 1$. Then applying Markov's inequality yields
	\begin{align*}
		\mathbb P_A\left(\|P_NY-P_N\tilde{Y}^n\|_{L^2([0,\tau];\mathcal H^\delta)} \geq C(\varDelta_L^n)^{1+ 2\min\{\chi_1,\chi_2\} }S \right) \leq \frac{C}{S^2}.
	\end{align*}
	Set $S= C^{-1}(\varDelta_L^n)^{-\min\{ \chi_1,\chi_2\}}$. Then
	\begin{align}\label{6.20}
		\mathbb P_A\left(\|P_NY-P_N\tilde{Y}^n\|_{L^2([0,\tau];\mathcal H^\delta)} \geq (\varDelta_L^n)^{1+ \min\{\chi_1,\chi_2\} } \right) \leq C^2 (\varDelta_L^n)^{2\min\{ \chi_1,\chi_2\} }.
	\end{align}
	
	Consider the set
	\begin{align*}
		A_1:= \{\omega\in A: \|P_NY-P_N\tilde{Y}^n\|_{L^2([0,\tau];\mathcal H^\delta)} \geq (\varDelta_L^n)^{1+ \min\{\chi_1,\chi_2\} }  \}.
	\end{align*}
	Then, on $A-A_1$, it holds necessarily that $\|P_NY-P_N\tilde{Y}^n\|_{L^2([0,t];\mathcal H^\delta)} \leq  \varDelta_L^n$ for $t \in (0,\tau]$. Thus, by the continuity of the norm $\|\cdot\|_{L^2([0,t];\mathcal H^\delta)}$ in $t$, we conclude that  $\tau = T$ on $A-A_1$. By \eqref{6.20} and $\varDelta_L^n \to 0$ as $n \to \infty$, for each $\iota>0$, we have
	\begin{align}\label{6.21}
		\mathbb P_{A-A_1}\left(\|P_NY-P_N\tilde{Y}^n\|_{L^2([0,T];\mathcal H^\delta)} \geq \iota \right) \to 0 \text{ ~as~ } n \to \infty.
	\end{align}
	
	Since each approximating solution $Y^n$ has a well-defined law, it remains to prove that, for any weak solution $Y$,
	\begin{align}\label{6.22}
		\mathbb E[F(P_N Y) -F(P_N Y^n)] \to 0 \text{ ~as~ } n \to \infty.
	\end{align}
	where $F:L^2([0,T];\mathcal H^{\delta}) \to \mathbb R$ is an arbitrary bounded continuous functional. Then $P_N Y$ is unique in law.
	
	Note that \eqref{6.15} implies
	\begin{align*}
		\mathbb E\left[(F(P_N Y^n) -F(P_N \tilde{Y}^n)) \right] \to 0 \text{ ~as~ } n \to \infty.
	\end{align*}
	From  \eqref{6.20} we have $\|P_N Y- P_N \tilde{Y}^n\|_{L^2([0,T];\mathcal H^\delta)} \xrightarrow{\mathbb{P}(\cdot|A-A_1)} 0$. Also, since $\lim_{n\to \infty}\mathbb P_A(A_1) =0$, we conclude that
	\begin{align}\label{6.23}
		\limsup_{n\to \infty}\left|\mathbb E[F(P_N Y) -F(P_N Y^n)] \right| \leq 2 \sup_{u} F(u) \mathbb P(\Omega-A).
	\end{align}
	In the preceding proof, we have shown that $\mathbb P(\Omega-A)$  can be made arbitrarily small(by taking $M$ sufficiently large). Since the left side of \eqref{6.23} is independent of the choice of $A$, we have established \eqref{6.22}. This completes the proof of uniqueness in law for $P_N Y$.
	
	Finally, we establish uniqueness in law of $Y(t)$. To this end, we need the following key property.
	\begin{proposition}\label{prop 6.3}
		For any fixed $T>0$, let $Y$ be an arbitrary weak mild solution of \eqref{1.2} in the case
		$\Psi\equiv 0$. Then, for each $\delta \in [0,\frac{1}{2})$,
		\begin{align}\label{6.24}
			\lim_{N\to \infty}\sup_{Y}\mathbb E \left[ \int_0^T \|Y(t) -P_NY(t)\|^2_{\mathcal H^\delta}\, \mathrm{d}t \right] =0.
		\end{align}
		\begin{proof}
			Any weak mild solution of \eqref{1.2} with $\Psi \equiv 0$ is given by
			\begin{align*}
				Y(t) = \mathcal R_{\gamma}(t)(u_0,u_1) + \int_0^t \mathcal R_{\gamma}(t-\zeta)\Phi(\mathcal I_1 Y(\zeta)) \, \mathrm{d}W(\zeta).
			\end{align*}
			It suffices to replace \eqref{6.1} in the proof of Proposition \ref{prop 6.2}(i) by the linear growth condition on $\Phi$. Then by an analogous argument, we obtain that for arbitrary $\delta \in [0,\frac{1}{2})$ and $T>0$,
			\begin{align*}
				\sup_{Y}\mathbb E \left[\|Y\|_{L^2([0,T];\mathcal H^{\delta_1})} \right]<\infty.
			\end{align*}
			This implies the existence of $\delta_1 \in (\delta, \frac{1}{2})$ such that
			\begin{align*}
				\sup_{Y}\mathbb E \left[ \int_0^T \|Y(t) -P_NY(t)\|^2_{\mathcal H^\delta}\, \mathrm{d}t \right] &\leq \lambda_{N}^{ \frac{\delta-\delta_1}{2} } \sup_{Y}\mathbb E \left[ \int_0^T  \|Y(t) -P_NY(t)\|^2_{\mathcal H^{\delta_1}}\, \mathrm{d}t \right]\\
				&\lesssim \lambda_{N}^{ \frac{\delta-\delta_1}{2} } \sup_{Y}\mathbb E \left[ \int_0^T  \|\Theta_{\gamma}(t)\|^2_{\mathcal H^{\delta_1}}\, \mathrm{d}t \right]\to 0
			\end{align*}
			as $N \to \infty$. Therefore, \eqref{6.24} holds.
		\end{proof}
	\end{proposition}
	
	Let $Y_1(t)$ and $Y_2(t)$ be any two weak mild solutions of \eqref{1.2} with $\Psi \equiv 0$. Then
	\begin{align}\label{6.25}
		\begin{aligned}
			\left|\mathbb E[F(Y_1) - F(Y_2)] \right| &\leq \left|\mathbb E[F(Y_1) - F(P_NY_1)] \right|+\left|\mathbb E[F(P_NY_2) - F(Y_2)] \right|\\
			&\quad + \left|\mathbb E[F(P_NY_1) - F(P_NY_2)] \right| \to 0 \text{ ~as~ } N \to \infty.
		\end{aligned}
	\end{align}
	By Proposition \ref{prop 6.3} and Markov's inequality, we have
	$P_NY_i \xrightarrow{\mathbb P} Y_i$ in $L^2([0,T];\mathcal H^\delta)$ for $i=1,2$. Continuity of $F$ and the continuous mapping theorem give
	$F(P_NY_i) \xrightarrow{\mathbb P} F(Y_i)$. Since $F$ is bounded, the dominated convergence shows the first two terms on the right-hand side of
	\eqref{6.25} tend to $0$ as $N\to \infty$. The third term on the right-hand side of \eqref{6.25} vanishes by the uniqueness in law of $P_N Y_i$. Thus the proof of uniqueness in law for \eqref{1.2} is complete.
	
	\section{Further applications of the novel coupling method}\label{sec 7}

	\subsection{General measure-valued memory kernels}
	We first give concrete examples of kernels satisfying $(\mathcal{PC}_\varepsilon)$ to show the wide applicability of Theorem \ref{the 6.1}.
	
	\begin{example}\label{eg 7.1}
		Classical time-fractional case.\rm{
			This type of kernel is usually used to describe single memory genetic effects. Consider the pair $(g_{1-\alpha},g_\alpha)$ with $\alpha \in (\frac{1}{2},1)$, where $g_\alpha$ is the Riemann-Liouville kernel. Then $g_{1-\alpha} \in (\mathcal{PC}_{\varepsilon})$ with $\varepsilon \in (0,\frac{2\alpha-1}{1-\alpha})$. In this case, \eqref{1.2} becomes
			\begin{align*}
				\begin{cases}
					\gamma \partial^\alpha_t \left( \partial^\alpha_t v(t,x)\right)=\Delta v(t,x)-\partial^\alpha_t v(t,x) + \Psi(v(t,x) ) + \Phi(v(t,x)) \frac{\mathrm{d}W}{\mathrm{d}t}, \quad t>0, \ x \in  (0,1), \\
					v(t,0) =v(t,1) = 0, \quad t>0,\\
					(g_{1-\alpha} \ast v(\cdot,x))(0) =u_0(x), \  \left(g_{1-\alpha} \ast \partial^\alpha_t v(\cdot,x)\right)(0)=u_1(x), \quad x \in (0,1),
				\end{cases}
			\end{align*}
			where $\partial^\alpha_t$ is the Riemann-Liouville fractional derivative.}
	\end{example}
	
	\begin{example}\label{eg 7.2}
		The multi-term time-fractional case.
		\rm{This kernel type describes complex transport in media with multi-scale memory. Consider the pair $a(t)=\sum_{i=1}^N g_{1-\alpha_i}(t)$ with $\min_{1\leq i\leq N}\{\alpha_i\}>\frac{1}{2}$. Then $a \in (\mathcal{PC}_\varepsilon)$ with $\varepsilon \in (0, \frac{2\min_{1\leq i\leq N}\{\alpha_i\}-1}{1-\min_{1\leq i\leq N}\{\alpha_i\}})$.}
	\end{example}
	
	\begin{example}\label{eg 7.3}
		The time-fractional case with weight.
		\rm{Kernels of this form are used to characterize memory dissipation with tunable intensity in layered viscoelastic media and composite disordered materials. Let $0<\alpha<\beta<\frac{1}{2}$ and $\nu>0$. Consider the kernel $a(t)=g_\beta(t)E_{\alpha,\beta}(-\nu t^\alpha), \quad t>0$, where $E_{\alpha,\beta}$ is the  generalized Mittag-Leffler function. Then $a \in (\mathcal{PC}_\varepsilon)$ with $\varepsilon \in (0,\frac{1-2\beta}{\beta})$.}
	\end{example}
	
	\begin{example}\label{eg 7.4}
		Ultraslow diffusion.\rm{This kernel describes anomalous transport with logarithmic slow evolution in heterogeneous media. Consider $a(t)= \int_0^t g_{\alpha}(t)\, \mathrm{d}t$. Then $a \in (\mathcal{PC}_\varepsilon)$ for any $\varepsilon>0$, which means that $\varepsilon$ in \eqref{2.1} can be chosen arbitrarily large. Consequently, the admissible H\"{o}lder exponent for the diffusion coefficient in Theorem \ref{the 6.1} can be made arbitrarily close to $\frac{3}{5}$ from above.}
	\end{example}
	
	Verification of the $(\mathcal{PC}_\varepsilon)$ condition for these kernels requires their asymptotic behavior at $t=0$, which follows from the Karamata-Feller Tauberian theory (see \cite[Chapter XIII]{W. Feller}). We omit the details. More examples can be found in \cite{R. L. Schilling, F. Alegria 23}.

	\subsection{General drift and diffusion coefficients}
	
	The condition $(\mathcal{PC}_\varepsilon)$ is used to simplify the model and isolate memory-driven transport effects. In fact, all results extend directly to measure-valued kernels, with the same proof ideas and no need for modification. Specifically, consider the following problem
	\begin{align}\label{7.1}
		\begin{cases}
			\gamma \partial_t \left( \mathrm{d}a \ast \partial_t(\mathrm{d}a \ast v(\cdot,x))\right)(t)=\Delta v(t,x)-\partial_t \left(\mathrm{d}a \ast v(\cdot,x) \right)(t) \\
			\quad\quad\quad\quad+ \Psi(v(t,x) ) + \Phi(v(t,x)) \frac{\mathrm{d}W}{\mathrm{d}t}, \quad t>0, \ x \in  (0,1), \\
			v(t,0) =v(t,1) = 0, \quad t>0,\\
			(\mathrm{d}a \ast v)(0) =u_0(x), \  \left(\mathrm{d}a \ast \partial_t(\mathrm{d}a \ast v(\cdot,x))\right)(0)=u_1(x), \quad x \in (0,1),
		\end{cases}
	\end{align}
	where $a$ satisfies
	\begin{enumerate}
		\item [$(\mathcal{PC}^\ast_\varepsilon)$]
		he function $a$ admits the representation
		$a(t) = a_0 + \int_0^t a_1(\zeta) \, \mathrm{d}\zeta$ for $t >0$, where $a_0 \geq 0$, $a_1 \in L^1_{loc}(\mathbb R_+)$ is nonnegative and nonincreasing, and there exists $b \in L_{\text{loc}}^{2+\varepsilon}(\mathbb{R}_+)$ for some $\varepsilon>0$, is also nonincreasing, such that $a_0 b(t) + (a_1 \ast b)(t) = 1$ for $t>0$.	
	\end{enumerate}
	This equation includes both an instantaneous local feedback and a hereditary relaxation integral, capturing both instantaneous constitutive responses and memory-coupled signal transmission with delay. Replacing $(\mathcal{PC}_\varepsilon)$ with $(\mathcal{PC}^\ast_\varepsilon)$ in Theorem \ref{the 6.1} does not affect the validity of the conclusion. In particular, if $a_0>0$, then $b \in L^{2+\varepsilon}_{loc}(\mathbb R_+)$ for any $\varepsilon >0$. Hence the admissible H\"{o}lder exponent of the diffusion coefficient can also approach
	$\frac{3}{5}$ arbitrarily from above. 	
	
	For example, if $a$ is the Heaviside function, i.e., $\mathrm{d}a(t)=\delta_0(t)\mathrm{d}t$, then \eqref{7.1} reduces to IBVP for the damped stochastic wave equation. In \cite{Y. Han}, the author ingeniously combined phase-lifting with the GC-D method to study the following damped stochastic wave equation
	\begin{align}\label{7.2}
		\begin{cases}
			\gamma \partial_t^2   v(t,x)=\Delta v(t,x)-\partial_t v(t,x) + B(t,v(t,x))
			+ G(t,v(t,x))\frac{\mathrm{d}W}{\mathrm{d}t}, \\
			v(t,0) =v(t,1) = 0, \quad t>0,\\
			v|_{t=0} =v_0(x), \  \partial_t v|_{t=0}=v_1(x), \quad x \in (0,1).
		\end{cases}
	\end{align}
	Here $G(t,v)$ is a uniformly nondegenerate diffusion coefficient that is $\frac{3}{4}+\varsigma$-H\"{o}lder continuous with respect to $v$. He constructed weak mild solutions in law and extended the framework to SPDEs on separable Hilbert spaces with densely defined operators, which is a notable contribution. Since the nonlinearity depends only on the displacement $v(t)$, the mild solution for the displacement is decoupled from the velocity. Hence the full state
	$Y(t)=(v(t),\partial_t v(t))$ is determined by $v(t)=\mathcal I_1 Y(t)$, and it suffices to analyze only this component.
	
	Physically, however, linear viscous damping, nonlinear Forchheimer drag, and Coulomb dry friction are typical velocity-dependent nonlinearities common in real dynamical systems. This motivates a framework in which the drift and diffusion depend on both $v$ and $\partial_t v$, as formulated in the following problem
	\begin{align}\label{7.3}
		\begin{cases}
			\gamma \partial_t^2   v(t,x)=\Delta v(t,x)-\partial_t v(t,x) + \Psi(v,\partial_tv)
			+ \Phi(v,\partial_tv) \frac{\mathrm{d}W}{\mathrm{d}t}, \\
			v(t,0) =v(t,1) = 0, \quad t>0,\\
			v|_{t=0} =v_0(x), \  \partial_t v|_{t=0}=v_1(x), \quad x \in (0,1).
		\end{cases}
	\end{align}
	In this setting, the dynamics of the displacement and the velocity become coupled, necessitating a direct analysis of the full state $Y(t)=(v(t),\partial_t v(t))$.
	
	It is worth noting that both the GC-D method of \cite{Y. Han} and the present GC-V method are based on power-law decay estimates for the damping factor.
	The core difference is that the former fails to obtain power-law decay estimates for the velocity component in the moment estimates. Specifically, the former requires dealing with the core function
	\begin{align*}
		f_{\gamma,\lambda_j+\xi,}(t)= e^{ -\frac{t}{2\gamma}}\sin(  \frac{\sqrt{4\gamma(\lambda_j+\xi)-1} }{2\gamma}  ).
	\end{align*}
	As shown in \cite{Y. Han}, a power-law damping factor can be extracted from the estimates of $f_{\gamma,\lambda_j}$. For the velocity case, however, such a factor would require a $\xi^{-\alpha}$ factor from the estimates of $f'_{\gamma,\lambda_j+\xi}(t)$,  which is impossible. Indeed, for fixed $t$, $\gamma$ and $\lambda_j$, one checks that
	$\lim_{\xi \to \infty} |f'_{\gamma,\lambda_j +\xi}(t)|$ does not exist. Thus, the GC-D method fails for nonlinearities depending on both displacement and velocity. In contrast, our GC-V method extracts decay factors of type $(1+\xi)^{-\alpha}$ in moment estimates for both components, see Lemma \ref{lem 5.1} and Lemma \ref{lem 5.2}. Moreover, our proof framework for existence and uniqueness is established directly for the full state variable $Y(t)$, and can be adapted to the analysis of problem \eqref{7.3}. One only needs to set $V= L^2([0,T];\mathcal H)$ and $V'=L^2([0,T];\mathcal L(H))$ in Proposition \ref{prop 6.1} and adjust all the corresponding estimates accordingly to obtain the following conclusion.
	
	\begin{definition}\label{def 7.1}
		Let $\gamma>0$. A weak mild solution to \eqref{7.3}
		with initial pair $(v_0, v_1) \in \mathcal{H}^1$ as a collectionn $ (\Omega, \mathcal{F}, (\mathcal{F}_t), \mathbb{P}, W, Y)$, subject to the following conditions:
		\begin{itemize}
			\item[(i)] $(\Omega, \mathcal{F}, (\mathcal{F}_t), \mathbb{P})$ is some filtered probability space;
			\item[(ii)] $W$ is a cylindrical Wiener process with respect to this filtration;
			\item[(iii)] $Y$ is a progressively measurable process taking values in $L_{loc}^2(\mathbb R_+;\mathcal H^\delta)$ for some $\delta \in [0,\frac{1}{2})$;
			\item[(iv)] $\mathbb{P}$-a.s, the mild solution $Y(t)$ satisfies
			\begin{align*}
				\begin{aligned}
					Y(t) &= \mathcal S_{\gamma}(t)(v_0,v_1) + \int_0^t \mathcal S_{\gamma}(t-\zeta) J_\gamma \Psi(Y(\zeta))\, \mathrm{d}\zeta \\
					&\quad+ \int_0^t \mathcal S_{\gamma}(t-\zeta) J_\gamma \Phi( Y(\zeta)) \, \mathrm{d}W(\zeta), \text{ ~a.a.~ } t \in \mathbb R_+,
				\end{aligned}
			\end{align*}
			where $S_\gamma(t)$ denotes the operator introduced in \cite{Y. Han}, which is written as
			$\mathcal S_\mu(t)$.
		\end{itemize}
	\end{definition}
	
	\begin{corollary}\label{cor 7.1}
		Suppose that the following assumptions are satisfied.
		\begin{enumerate}
			\item[$(\mathcal M'_1)$]The map $\Psi: L_{loc}^2(\mathbb R_+;\mathcal H) \to L_{loc}^2(\mathbb R_+; H)$ is casual in the sense that, for any $T>0$ and any $Y, Z \in L_{loc}^2(\mathbb R_+;\mathcal H)$, if $Y=Z$ a.e. on $[0,T]$, then $\Psi(Y) = \Psi(Z)$ a.e. on $[0,T]$ in $H$.
			Moreover, for each $T>0$, the induced map $\Psi_T: L^2([0,T];\mathcal H) \to L^2([0,T]; H)$ is Borel measurable.
			
			\item[($\mathcal M'_2$)] For each $Y \in \mathcal  H$, $\Phi(Y) \in \mathcal L(H)$ and $\Phi(Y)$ has a right inverse $\Phi^{-1}(Y)$ on $H$ in the sense that $\Phi(Y)\Phi^{-1}(Y)u = u$ for any $u \in H$,
			with $\Phi^{-1}(Y)H \subset H$ and satisfies, for some $C>0$,
			\begin{align*}
				\sup_{Y \in \mathcal H} |\Phi^{-1}(Y)|_{\mathcal L( H)} \leq C < \infty.
			\end{align*}
			Moreover, $\Phi$ is self-adjoint, in the sense that for any $Y \in \mathcal H$, $\langle \Phi(Y)e_i,e_j\rangle =\langle e_i,\Phi(Y)e_j\rangle $ for any $i,j \in \mathbb N_+$.
			
			\item[$(\mathcal M'_3)$] $\Phi$
			admits an extension to a map
			$\Phi: L_{loc}^2(\mathbb R_+;\mathcal H)  \to L_{loc}^2(\mathbb R_+; \mathcal L(H))$, and is assumed to be
			$\kappa$-H\"{o}lder continuous in the sense that, for each $T>0$, there exists a constant $C_T$ such that
			\begin{align*}
				\|\Phi(Y) - \Phi(Z)\|_{L^2([0,T];{\mathcal L}(H)) } \leq C_T\|Y - Z\|_{L^2([0,T];\mathcal H)}^\kappa
			\end{align*}
			for any $Y, Z \in L_{loc}^2(\mathbb R_+;\mathcal H)$, where $\kappa \in (\frac{3}{5},1]$.
			Additionally, there exists a constant $C_T'>0$, such that
			\begin{align*}
				&\quad \|P_N\Phi(Y) - P_N\Phi(Z)\|_{L^2([0,T];{\mathcal L}(H)) }\\
				&\leq C_T'  \|\Phi(P_NY) - \Phi(P_NZ)\|_{L^2([0,T];{\mathcal L}(H))}, \quad  Y,Z \in L_{loc}^2(\mathbb R_+;\mathcal H),
			\end{align*}
			holds uniformly for all sufficiently large $N\in \mathbb N_+$.
			
			\item[$(\mathcal M'_4)$]For each $T>0$, there exist some constant $C_{\Psi,T}>0$ and $C_{\Phi,T}>0$ such that
			\begin{align*}
				&\|\Psi(Y)\|_{L^2([0,T];H)} \leq C_{\Psi,T}(1 + \|Y\|_{L^2([0,T];\mathcal H)}),\\
				&\|\Phi(Y)\|_{L^2([0,T],\mathcal L(H) )} \leq C_{\Phi,T}(1 + \|Y\|_{L^2([0,T];\mathcal H)}).
			\end{align*}
		\end{enumerate}
		Then, for any initial datum $(v_0, v_1) \in \mathcal H^1$, \eqref{7.3} admits a weak mild solution in the sense of Definition \ref{def 7.1}, where $\delta$ can be chosen arbitrarily close to $\frac{1}{2}$ from below, and the solution is unique in law.
	\end{corollary}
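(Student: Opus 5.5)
The plan is to specialize the entire Section 6 argument to $a$ equal to the Heaviside function, i.e. $\mathrm{d}a=\delta_0$ and $b\equiv 1$. This $b$ lies in $L^{2+\varepsilon}_{loc}(\mathbb R_+)$ for every $\varepsilon>0$. Two changes are then made throughout. First, the coefficients act on the full state $Y=(v,\partial_t v)$, so Proposition \ref{prop 6.1} is applied with $V=L^2([0,T];\mathcal H)$ and $V'=L^2([0,T];\mathcal L(H))$. Second, every estimate that previously controlled only $\mathcal I_1 Y$ in $H$ must now control $Y$ in $\mathcal H=H\times H^{-1}$.

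In this case $\eta(t,\mathrm d\theta)=\delta_{t}(\mathrm d\theta)$, so by Lemma \ref{lem 4.1} the kernels are just $\gamma^{-1}g_{\gamma,\lambda_j,\xi}(t)$ and $\gamma^{-1}g'_{\gamma,\lambda_j,\xi}(t)$. Consequently $\mathcal R_{\gamma,\xi}$ coincides with the damped-wave semigroup $\mathcal S_\gamma$ of \cite{Y. Han}, shifted by $\xi$, and Definition \ref{def 7.1} agrees with \eqref{3.17}. Lemmas \ref{lem 4.2}--\ref{lem 4.7} and \ref{lem 5.1}--\ref{lem 5.2} then hold with $\varepsilon$ arbitrarily large. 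By Remark \ref{rem 5.1}, $\alpha_1$ may be taken up to $1$. The regularity window $\delta<\min\{\tfrac12,\tfrac{\varepsilon}{2+\varepsilon}\}$ becomes $\delta<\tfrac12$, and $\beta_1$ may approach $\tfrac{\varepsilon}{2(2+\varepsilon)}\to\tfrac12$. The key point is that \eqref{6.9} gives the decay factor $(1+\xi)^{-2\beta_1}$ for both components of $\Theta_{\gamma,\xi}$, with $\Upsilon=\Phi(Y)-\Phi_n(\tilde Y^n)$ now depending on $Y$.

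For existence, I will redo Proposition \ref{prop 6.2}. The linear growth in $(\mathcal M_4')$ is now in terms of $\|Y\|_{L^2([0,T];\mathcal H)}$, so the Volterra inequality for $p(T)=\mathbb E\|Y^{n}\|_{L^2([0,T];\mathcal H^\delta)}$ closes directly: both \eqref{6.6} and \eqref{6.8} feature the full $p(\zeta)$ on the right, and the resolvent comparison argument works unchanged. The time-regularity statement, Proposition \ref{prop 5.2}, transfers in the same way. Tightness in $L^2([0,T];\mathcal H^{\delta})$ and the Skorokhod limit then follow as before, using continuity of $\Phi$ on $L^2([0,T];\mathcal H)$. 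The drift $\Psi(Y)$ is removed by Girsanov's theorem via $(\mathcal M_2')$, as in observation (i).

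For uniqueness I will use the GC-V coupling \eqref{6.12}--\eqref{6.14} with control $\xi(I-\epsilon\Delta)^{-1}(w-\tilde w^n)1_{t\le\tau}$, the stopping time on $\|P_NY-P_N\tilde Y^n\|_{L^2([0,t];\mathcal H^\delta)}$, $1+\xi=(\varDelta^n_L)^{\rho-1}$ and $\epsilon=(\varDelta_L^n)^c$. The Hölder step \eqref{6.16} now reads $\|\Phi(P_NY)-\Phi_n(P_N\tilde Y^n)\|\lesssim(\varDelta_L^n)^\kappa$. It uses $\|Y-\tilde Y^n\|_{L^2\mathcal H}\le\|\cdot\|_{L^2\mathcal H^\delta}$ up to $\tau$, and this is exactly where the full-state stopping time is needed. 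The GC-D method cannot supply this, since it gives no decay for the velocity component.

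The main obstacle is the exponent bookkeeping. I need to check that $\kappa>\tfrac35$ still allows choices with
\[
\beta_1(1-\rho)+\kappa>1,\qquad 1-\tfrac{(1+\delta)(1-\rho)}{2}+\tfrac{c\delta}{2}>1,\qquad \rho-\tfrac{c(1-\delta)}{2}>0.
\]
In the limit $\delta\to\tfrac12$ and $\beta_1\to\tfrac12$, the second and third conditions force $c>3(1-\rho)$ and $c<4\rho$, i.e. $\rho>\tfrac37$. The last two conditions of (iv) are independent of $\kappa$, so the real constraint is the interplay between the lower bound on $\rho$ and the first condition. The first becomes $\kappa>1-\tfrac{1-\rho}{2}$, and with $\rho\downarrow\tfrac15$ as in (iv) for $\varepsilon\ge2$ this gives $\kappa>\tfrac35$. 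Hence I will verify that the interpolation \eqref{6.19}, with $\alpha_1=1$ allowed by Remark \ref{rem 4.1}, permits $\rho$ as small as $\tfrac15$. If this fails I would retune $\alpha_1$ and $\beta_1$ in \eqref{4.3}, exploiting the exponential bound on $g$, and in the worst case accept a slightly worse threshold.

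Once the exponents are fixed, the remaining steps go through exactly as in Section 6: the Pinsker bound \eqref{6.15}, the tail estimate \eqref{6.20}, the conclusion $\tau=T$, uniqueness in law of $P_NY$, and the passage $N\to\infty$ via Proposition \ref{prop 6.3}.
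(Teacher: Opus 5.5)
Your route is the paper's own. You take $a$ to be the Heaviside function, so $b\equiv 1\in L^{2+\varepsilon}_{loc}$ for every $\varepsilon$. You apply Proposition \ref{prop 6.1} with $V=L^2([0,T];\mathcal H)$ and $V'=L^2([0,T];\mathcal L(H))$, and rerun Section \ref{sec 6}. The paper then simply reads $\kappa>\frac35$ off \eqref{2.1} with $\varepsilon\to\infty$, i.e.\ off observation (iv).

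\textbf{The gap: the exponent bookkeeping.} It is the one step you flag and leave open, and your own arithmetic shows it does not close as planned. You correctly find that $\chi_2>0$ needs $c>\frac{(1+\delta)(1-\rho)}{\delta}$ and $\chi_3>0$ needs $c<\frac{2\rho}{1-\delta}$. These are compatible only if
\[
\rho>\frac{1-\delta^2}{1+2\delta-\delta^2},
\]
hence $\rho>\frac37$ for every admissible $\delta<\frac12$. Combined with $\beta_1(1-\rho)+\kappa>1$ and $\beta_1<\frac12$, this yields only $\kappa>\frac57$.

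You then invoke $\rho\downarrow\frac15$ ``as in (iv)'', which contradicts what you just derived. The value $\frac15=\lim_{\delta\to 1/2}\frac{(1-\delta)^2}{1+\delta^2}$ comes from the lower endpoint $\frac{(1-\delta)(1-\rho)}{\delta}$ of the $c$-interval in (iv). A $c$ just above that endpoint does not give $\chi_2>0$.

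The remedies you list cannot repair this:
\begin{enumerate}
\item[(a)] \eqref{6.19} rests on Lemmas \ref{lem 4.6}--\ref{lem 4.7} (hence on Lemma \ref{lem 4.3}, which already uses the extreme $\alpha=1$ in Proposition \ref{prop 4.1}) and on \eqref{6.10}--\eqref{6.11}. It does not rest on Lemma \ref{lem 4.2}, so $\alpha_1=1$ via Remark \ref{rem 4.1} plays no role there.
\item[(b)] Retuning $\alpha_1,\beta_1$ cannot push the decay in \eqref{6.9} beyond $(1+\xi)^{-1}$. The velocity term $(1+\xi)^{-2\beta_1}\gamma^{2\beta_1-2}$ in \eqref{4.5} caps it, with $\beta_1<\frac12$.
\end{enumerate}
Settling for ``a slightly worse threshold'' does not prove the corollary for $\kappa\in(\frac35,1]$.

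\textbf{The missing idea: remove the $\epsilon$-losses using that $N$ is fixed.} Uniqueness is proved for $P_NY$ with $N$ fixed and only $n\to\infty$, so constants depending on $N$ are harmless.

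First, make the control $P_N$-projected, $\xi(I-\epsilon\Delta)^{-1}P_N(w-\tilde w^n)1_{t\le\tau}$. This is how the Girsanov shift before \eqref{6.15} is actually written. With the unprojected control of \eqref{6.14}, which you adopt, the Girsanov drift involves $(I-P_N)(w-\tilde w^n)$, and $\tau$ does not control it.

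On the range of $P_N$ one has
\[
\|(I-\epsilon\Delta)^{-1}P_Nf\|_H\le\lambda_N^{\frac{1-\delta}{2}}\|P_Nf\|_{H^{\delta-1}},\qquad \|(I-\epsilon\Delta)^{-1}P_Nf-P_Nf\|_{H^{-1}}\le\epsilon\lambda_N\|P_Nf\|_{H^{\delta-1}}.
\]
Consequently:
\begin{enumerate}
\item[(a)] \eqref{6.15} improves to $C_N(\varDelta_L^n)^{\rho}$.
\item[(b)] The exponent in \eqref{6.19} becomes $2-(1+\delta)(1-\rho)+2c$. Alternatively, drop the regularization altogether, and the error term in \eqref{6.17} disappears.
\item[(c)] $c$ can be taken large and $\rho\in(0,1)$ arbitrarily small.
\end{enumerate}
The only surviving constraint is then $\beta_1(1-\rho)+\kappa>1$. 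With $\beta_1\uparrow\frac12$ and $\rho\downarrow 0$, it holds for every $\kappa\in(\frac35,1]$.
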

	
	\begin{remark}\label{rem 7.1}
		Since the solution operator $\mathcal S_\gamma(t)$ has no singularity at $t=0$, the coupling analysis method developed in this paper can also be applied to study the weak existence and uniqueness in law of the mild solution to \eqref{7.3} as $C([0,T];\mathcal H^\delta)$-valued processes.
	\end{remark}
	
	Below we provide an example of a diffusion coefficient satisfying all the assumptions above.
	\begin{example}\label{eg 7.5}
		Multi-scale spectral multiplicative diffusion coefficient. \rm{For each $(v,w)\in \mathcal H$, define $\Phi(v,w) \in \mathcal L(H)$ by
			\begin{align*}
				\Phi(v,w)u = \sum_{i,j=1}^{N_0} c_{i,j}(\langle v,e_i \rangle, \langle w,e_i \rangle )\langle u,e_j \rangle e_j  +  \sum_{ j > N_0} f_j(\langle v,e_j \rangle, \lambda_j^{-\frac{1}{2}} \langle w,e_j \rangle)\langle u,e_j \rangle e_j,
			\end{align*}
			where $N_0 \in \mathbb N_+$ is some fixed constant. The following assumptions are satisfied.
			\begin{itemize}
				\item [(i)] The low-frequency matrix  $\{c_{ij}\}$ is self-adjoint and either uniformly positive definite or uniformly negative definite;
				\item [(ii)] For some $C_0>0$ independent of $j$, the functions $f_j: \mathbb R \times \mathbb R \to \mathbb R \setminus (-C_0, C_0)$ are uniformly H\"{o}lder continuous in $j$ and
				satisfy a uniform linear growth condition, i.e., there exists a constant $C>0$, independent of $j$, such that
				\begin{align*}
					|f(y_1,z_1)-f(y_2,z_2)| \leq C (|y_1-y_2|^\kappa + |z_1-z_2|^\kappa), \ |f(y,z)|\leq C(1+|y|+|z|),
				\end{align*}
				where $\kappa \in (\frac{3}{5},1]$.
		\end{itemize}}
	\end{example}
	
	\subsection{Open problem}
	The IBVP for the classical stochastic damped wave equation is included in model \eqref{7.1}. For such problems, the Smoluchowski-Kramers approximation yields the standard asymptotic scenario: when the inertia parameter $\gamma \to 0$, the inertial effects become negligible, and the second-order dynamics reduce to a first-order diffusion equation in the asymptotic limit. For example, Salins \cite{M. Salins} developed the Smoluchowski-Kramers approximation for stochastic damped wave equations with multiplicative noise in any dimension. Under standard noise regularity, he proved $L^p$-uniform convergence of solutions to the corresponding heat equation in the small-mass limit, and established well-posedness of mild solutions. In \cite{Y. Han}, the author also proved that, as the mass density tends to zero, the weak mild solutions of the stochastic damped wave equation converge in distribution in the space of continuous paths to the solutions of the corresponding heat equation. 
	
	However, in the presence of more general temporal memory effects, it remains an open question whether the Smoluchowski-Kramers approximation analysis can be extended to time-nonlocal telegraph equations. Specifically, for any $\varepsilon_1, \varepsilon_2 \in BV_{loc}(\mathbb R_+)$ satisfying $\mathrm{d}\varepsilon_1 \ll \mathrm{d}\varepsilon_2$, does the voltage evolution depart from the wave propagation regime and become governed by charge relaxation and polarization diffusion? Furthermore, as
	$\gamma \to 0$, will the solutions of \eqref{1.2} converge, in some sense, to the solution of the following nonlocal diffusion problem?
	\begin{align*}
		\begin{cases}
			\partial_t \left(a \ast v(\cdot,x) \right)(t) - \Delta v(t,x)= \Psi(v(t,x) ) + \Phi(v(t,x)) \frac{\mathrm{d}W}{\mathrm{d}t}, \quad t>0, \ x\in(0,1), \\
			v(t,0) =v(t,1) = 0, \quad t>0, \\
			(a \ast v(\cdot,x))(0)=u_0(x), \quad x \in (0,1).
		\end{cases}
	\end{align*}
	If such a convergence mode exists, does the conclusion hold for all kernels satisfying $(\mathcal{PC}_\varepsilon)$, or does it exhibit a critical phenomenon with respect to the integrability exponent $\varepsilon$, or does the convergence behavior reveal an entirely new structure depending on the range of
	$\varepsilon$ that has not yet been explored? In subsequent work, we will carry out a systematic investigation to elucidate the essential mechanisms underlying nonlocal telegraph equations and to consolidate the theoretical foundation of this field.
	
	\appendix
	\section*{Appendix}
	\section{Critical function power bound against damping factor}\label{app A}
	\subsection{Proof of Proposition \ref{prop 4.1}}
	\begin{proof}
		Clearly, for $(1+\xi)^2 -4\gamma\lambda_j >0$, we have
		\begin{align}\label{A.1}
			\frac{1+\xi-\sqrt{(1+\xi)^2-4\gamma\lambda_j}}{2\gamma} =  \frac{2\lambda_j }{1+\xi+\sqrt{(1+\xi)^2-4\gamma\lambda_j}}>\frac{\lambda_j}{1+\xi}
		\end{align}
		Below, we discuss five cases according to the comparison between $(1+\xi)^3$ and $4\gamma\lambda_j$. In particular, the first two cases are analyzed under the condition $\alpha \in [-1,1]$, while the remaining three cases are treated under the assumption $\alpha \in [0,1]$.
		
		If $(1+\xi)^2 >4\gamma\lambda_j > \frac{1}{2}(1+\xi)^2$, by the mean value theorem and \eqref{A.1}, we have $|g_{\gamma,\lambda_j,\xi}(\theta)|\leq \theta e^{ -\frac{\lambda_j}{1+\xi}\theta }$. Note that $\sup\limits_{\theta \in \mathbb R_+} \theta e^{-\frac{\lambda_j}{2(1+\xi)}\theta}\leq\frac{2(1+\xi)}{e\lambda_j}$.Then we obtain
		\begin{align}\label{A.2}
			|g_{\gamma,\lambda_j,\xi}(\theta)| \leq  \frac{2(1+\xi)}{e\lambda_j} e^{-\frac{\lambda_j}{2(1+\xi)}\theta}= \frac{2}{e(1+\xi)^\alpha}\frac{1}{\lambda_j^{\frac{1-\alpha}{2}  } } \left[\frac{(1+\xi)^2}{\lambda_j}\right]^{\frac{1+\alpha}{2} } e^{-\frac{\lambda_j}{2(1+\xi)}\theta}.
		\end{align}
		Note that $4\gamma\lambda_j>\frac{1}{2}(1+\xi)^2$ implies $\frac{(1+\xi)^2}{\lambda_j}< 8\gamma$. This along with \eqref{A.2} gives
		\begin{align}\label{A.3}
			|g_{\gamma,\lambda_j,\xi}(\theta)| \leq 2\frac{8^{ \frac{1+\alpha}{2} }  }{e(1+\xi)^\alpha}\frac{\gamma^{\frac{1+\alpha}{2} } }{\lambda_j^{\frac{1-\alpha}{2}  } } e^{-\frac{\lambda_j}{2(1+\xi)}\theta}\leq 6  (1+\xi)^{-\alpha}\gamma^{\frac{1+\alpha}{2} } \lambda_j^{-\frac{1-\alpha}{2}  } e^{-\frac{\lambda_j}{2(1+\xi)}\theta}.
		\end{align}
		
		If $ \frac{1}{2}(1+\xi)^2 \geq 4\gamma\lambda_j$, we have
		\begin{align*}
			&((1+\xi)^2-4\gamma\lambda_j)^{\frac{\alpha}{2}} \geq {2}^{ -\frac{\alpha}{2} } (1+\xi)^\alpha \text{ ~for~ } \alpha \in [0,1],\\
			&((1+\xi)^2-4\gamma\lambda_j)^{\frac{\alpha}{2}} \geq (1+\xi)^\alpha \text{ ~for~ } \alpha \in [-1,0).
		\end{align*}
		Therefore,
		\begin{align}\label{A.4}
			\begin{aligned}
				\sqrt{(1+\xi)^2 -4\gamma \lambda_j} &=((1+\xi)^2-4\gamma\lambda_j)^{ \frac{\alpha}{2} } ((1+\xi)^2-4\gamma\lambda_j)^{\frac{1-\alpha}{2}} \\
				&\geq \min\{ 1, 2^{-\frac{\alpha}{2}} \}(1+\xi)^\alpha (4\gamma\lambda_j)^{\frac{ 1-\alpha}{2}}\\
				&\geq \frac{1}{\sqrt{2} }(1+\xi)^\alpha \gamma^{ \frac{1-\alpha}{2} } \lambda_j^{ \frac{1-\alpha}{2}}.
			\end{aligned}
		\end{align}
		Also, clearly, in this case
		the following holds:
		\begin{align*}
			|g_{\gamma,\lambda_j,\xi}(\theta)| \leq \frac{2\gamma}{\sqrt{(1+\xi)^2 - 4\gamma\lambda_j}}e^{-\frac{\lambda_j}{2(1+\xi)}\theta}.
		\end{align*}
		Combining the above with \eqref{A.4} yields \eqref{A.3}.
		
		When $(1+\xi)^2= 4\gamma\lambda_j$, note that $\sup\limits_{\theta \in \mathbb R_+} \theta e^{-\frac{1+\xi}{4\gamma}\theta}\leq\frac{4\gamma}{e(1+\xi)}$. Consequently,
		\begin{align*}
			|g_{\gamma,\lambda_j,\xi}(\theta)|&\leq \frac{4\gamma}{e(1+\xi)}e^{-\frac{1+\xi}{4\gamma}\theta}= \frac{2}{2^{ 1-\alpha}e}(1+\xi)^{-\alpha} \gamma^{ \frac{1+\alpha}{4} } \lambda_j^{-\frac{1-\alpha}{2}}e^{-\frac{1+\xi}{4\gamma}\theta}\\
			&\leq 2(1+\xi)^{-\alpha} \gamma^{ \frac{1+\alpha}{2} } \lambda_j^{-\frac{1-\alpha}{2}}e^{-\frac{1+\xi}{4\gamma}\theta}.
		\end{align*}
		
		In the case where $2(1+\xi)^2>4\gamma\lambda_j>(1+\xi)^2$, it is clear that $\frac{\gamma}{1+\xi}<\frac{1+\xi}{2\lambda_j}$. Combining this with
		$|\sin x| \leq |x|$ and $\sup\limits_{\theta \in \mathbb R_+} \theta e^{-\frac{1+\xi}{4\gamma}\theta}\leq\frac{4\gamma}{e(1+\xi)}$ yields
		\begin{align*}
			|g_{\gamma,\lambda_j,\xi}(\theta)|\leq \theta e^{-\frac{1+\xi}{2\gamma}} \leq \frac{4\gamma}{e(1+\xi)}e^{-\frac{1+\xi}{4\gamma}\theta}\leq \frac{2(1+\xi)}{e\lambda_j}e^{-\frac{1+\xi}{4\gamma}\theta}.
		\end{align*}
		A similar derivation as in \eqref{A.2} applied to the above gives
		\begin{align}\label{A.5}
			|g_{\gamma,\lambda_j,\xi}(\theta)| \leq \frac{2}{e(1+\xi)^\alpha}\frac{1}{\lambda_j^{\frac{1-\alpha}{2}  } } \left[\frac{(1+\xi)^2}{\lambda_j}\right]^{\frac{1+\alpha}{2} } e^{-\frac{\lambda_j}{2(1+\xi)}\theta}.
		\end{align}
		Observe that from $4\gamma\lambda_j>(1+\xi)^2$ it follows that $\frac{(1+\xi)^2}{\lambda_j}<4\gamma$. This, combined with \eqref{A.5}, yields
		\begin{align}\label{A.6}
			|g_{\gamma,\lambda_j,\xi}(\theta)| \leq 3  (1+\xi)^{-\alpha}\gamma^{\frac{1+\alpha}{2} } \lambda_j^{-\frac{1-\alpha}{2}  } e^{-\frac{(1+\xi)}{4\gamma}\theta}.
		\end{align}
		
		Finally, consider the case $4\gamma\lambda_j\geq 2(1+\xi)^2$. Note that
		\begin{align*}
			&(4\gamma\lambda_j-(1+\xi)^2)^{\frac{\alpha}{2}} \geq  (1+\xi)^\alpha \text{ ~for~ } \alpha \in [0,1].
		\end{align*}
		Thus,
		\begin{align}\label{A.7}
			\begin{aligned}
				\sqrt{4\gamma \lambda_j- (1+\xi)^2} &=(4\gamma\lambda_j-(1+\xi)^2)^{ \frac{\alpha}{2} } (4\gamma\lambda_j-(1+\xi)^2)^{\frac{1-\alpha}{2}} \\
				&\geq \min\{ 1,2^{ \frac{\alpha}{2} } \}(1+\xi)^\alpha (2\gamma\lambda_j)^{\frac{ 1-\alpha}{2}}\\
				&\geq (1+\xi)^\alpha \gamma^{ \frac{1-\alpha}{2} } \lambda_j^{ \frac{1-\alpha}{2}}.
			\end{aligned}
		\end{align}
		Also, since $|\sin x| \leq 1$, in this case we have
		\begin{align*}
			|g_{\gamma,\lambda_j,\xi}|\leq \frac{2\gamma}{\sqrt{4\gamma\lambda_j-(1+\xi)^2} }e^{-\frac{1+\xi}{4\gamma}\theta }
		\end{align*}
		This, together with \eqref{A.7}, gives \eqref{A.6}.
	\end{proof}
	
	\subsection{Proof of Proposition \ref{prop 4.2}}
	\begin{proof}
		The proof is divided into three cases.
		
		If $\frac{1}{2}(1+\xi)^2 \geq 4\gamma\lambda_j$, then
		\begin{align*}
			\frac{\gamma}{\sqrt{(1+\xi)^2-4\gamma\lambda_j } } \leq \sqrt{2}\frac{\gamma}{1+\xi}
		\end{align*}
		and
		\begin{align*}
			&\quad |Z_1e^{-Z_1\theta} - Z_2e^{-Z_2\theta}|\\
			&\leq \frac{2\lambda_j}{1+\xi + \sqrt{(1+\xi)^2- 4\gamma\lambda_j} }e^{ -\frac{\lambda_j}{1+\xi}\theta }  + \frac{1+\xi+\sqrt{(1+\xi)^2 -4\gamma\lambda_j } }{2\gamma} e^{ -\frac{1+\xi}{2\gamma}\theta }\\
			&\leq \frac{2\sqrt{2} }{1+\sqrt{2}} \frac{\lambda_j}{1+\xi}e^{ -\frac{\lambda_j}{1+\xi}\theta } + \frac{1+\xi}{\gamma} e^{ -\frac{1+\xi}{2\gamma}\theta } .
		\end{align*}
		These two inequalities imply
		\begin{align*}
			|g'_{\gamma,\lambda_j,\xi}(\theta)| &\leq \frac{1}{2+2\sqrt{2}} \frac{8\gamma\lambda_j}{(1+\xi)^2} e^{ -\frac{\lambda_j}{1+\xi}\theta }+ \sqrt{2} e^{ -\frac{1+\xi}{2\gamma}\theta }
			\\
			&\leq 2 (1+\xi)^{-\alpha} \gamma^{ \frac{\alpha}{2} } \lambda_j^{ \frac{\alpha}{2}}e^{ -\frac{\lambda_j}{1+\xi}\theta } +\sqrt{2} e^{ -\frac{1+\xi}{4\gamma}\theta }, \quad \theta>0.
		\end{align*}
		
		In the case where $(1+\xi)^2 > 4\gamma\lambda_j>\frac{1}{2}(1+\xi)^2$, by the mean value theorem, there exists some $Z\in(Z_1, Z_2)$  such that
		\begin{align*}
			\frac{|Z_1e^{-Z_1\theta} - Z_2e^{-Z_2\theta}|}{|Z_1 -Z_2|} = |e^{-Z\theta }( 1-Z\theta )|\leq e^{-\frac{Z_1\theta }{2}}|e^{-\frac{Z\theta}{2} }( 1-Z\theta )|\leq e^{-\frac{\lambda_j}{2(1+\xi)}\theta} .
		\end{align*}
		Observe that from
		$4\gamma\lambda_j>\frac{1}{2}(1+\xi)^2$ it follows that $\lambda_j^{ -\frac{\alpha}{2} } \leq(8\gamma)^{\frac{\alpha}{2} }(1+\xi)^{ -\alpha }$. Consequently,
		\begin{align}\label{A.8}
			|g'_{\gamma,\lambda_j,\xi}(\theta)|\leq  \lambda_j^{-\frac{\alpha}{2}}  \lambda_j^{ \frac{\alpha}{2} } e^{-\frac{\lambda_j}{2(1+\xi)}\theta} \leq 2\sqrt{2} (1+\xi)^{ -\alpha } \gamma^{ \frac{\alpha}{2} } \lambda_j^{ \frac{\alpha}{2} }e^{-\frac{\lambda_j}{2(1+\xi)}\theta}.
		\end{align}
		
		Finally, one readily verifies that in the case $(1+\xi)^2 \leq 4\gamma\lambda_j$,
		\begin{align*}
			|g'_{\gamma,\lambda_j,\xi}(\theta)|\leq e^{-\frac{1+\xi}{4\gamma}\theta} \sup_{\theta \in \mathbb R_+} e^{-\frac{1+\xi}{4\gamma}\theta} (1+\frac{1+\xi}{2\gamma}\theta) \leq \sqrt{2} e^{-\frac{1+\xi}{4\gamma}\theta}.
		\end{align*}
		Therefore, by an argument analogous to that leading to \eqref{A.8}, we obtain
		\begin{align*}
			|g'_{\gamma,\lambda_j,\xi}(\theta)|\leq 2\sqrt{2} (1+\xi)^{ -\alpha } \gamma^{ \frac{\alpha}{2} } \lambda_j^{ \frac{\alpha}{2} } e^{-\frac{1+\xi}{4\gamma}\theta}.
		\end{align*}
	\end{proof}
	
	\subsection{Proof of Lemma \ref{lem 4.2}}
	\begin{proof}
		First prove (i). For $(1+\xi)^2 -4\gamma\lambda_j >0$, \eqref{4.1} and Proposition \ref{prop 4.1} give
		\begin{align}\label{A.9}
			\begin{aligned}
				|H_{\gamma,\lambda_j,\xi}(t;0,\gamma^{-1},0)(t)| &\leq  \gamma^{-1}\int_0^\infty |g_{\gamma,\lambda_j,\xi}(\theta)| \eta(t,\mathrm{d}\theta)\\
				&\leq 6(1+\xi)^{-\alpha_1} \gamma^{-\frac{1-\alpha_1}{2}}\lambda_j^{- \frac{1-\alpha_1}{2}}\int_0^\infty e^{ - \frac{\lambda_j}{2(1+\xi)}\theta } \eta(t,\mathrm{d}\theta), \quad\\
				&= 6(1+\xi)^{-\alpha_1} \gamma^{-\frac{1-\alpha_1}{2}}\lambda_j^{- \frac{1-\alpha_1}{2}} r_{ \frac{\lambda_j}{2(1+\xi)} }(t), \quad  t>0.
			\end{aligned}
		\end{align}
		When $\beta_1 \in (0, \frac{\varepsilon}{2(2+\varepsilon)})$,
		applying Proposition \ref{prop 2.1}(iv), H\"{o}lder's inequality, and Young's inequality successively yields
		\begin{align*}
			\begin{aligned}
				&\quad \int_0^T r_{\frac{\lambda_j}{2(1+\xi)} }(t)^2\mathrm{d}t\\
				&\leq \int_0^T b(t)^2 [1+\frac{\lambda_j}{2(1+\xi)}(1\ast b)(t)]^{-2} \, \mathrm{d}t\\
				&\leq \|b\|^2_{L^{2+\varepsilon}([0,T]) } \left(\int_0^T [1+\frac{\lambda_j}{2(1+\xi)}(1\ast b)(t) ]^{- \frac{2(2+\varepsilon)}{\varepsilon} }   \, \mathrm{d}t \right)^{ \frac{\varepsilon}{2+\varepsilon}}\\
				&\leq C_{\beta_1} \|b\|^2_{L^{2+\varepsilon}([0,T])} \left(\int_0^T  [\frac{\lambda_j}{2(1+\xi)}(1\ast b)(t)]^{- \frac{2\beta_1(2+\varepsilon)}{\varepsilon} }  \, \mathrm{d}t \right)^{ \frac{\varepsilon}{2+\varepsilon}}\\
				&\leq C_{\beta_1}  (1+\xi)^{2\beta_1 } \lambda_j^{-2\beta_1} b(T)^{-\frac{\varepsilon}{2+\varepsilon} } \|b\|^2_{L^{2+\varepsilon}([0,T])} \left(\int_0^T b(t)[(1\ast b)(t)]^{- \frac{2\beta_1(2+\varepsilon)}{\varepsilon} }  \, \mathrm{d}t \right)^{ \frac{\varepsilon}{2+\varepsilon}}\\
				&\leq C_{\varepsilon,\beta_1}(1+\xi)^{2\beta_1} \lambda_j^{-2\beta_1} b(T)^{-\frac{\varepsilon}{2+\varepsilon}  } \|b\|^2_{L^{2+\varepsilon}([0,T])} [(1\ast b)(T)]^{\frac{\varepsilon}{2+\varepsilon}-2\beta_1}.
			\end{aligned}
		\end{align*}
		If $\beta_1=0$, it follows directly that $\int_0^T r_{ \frac{\lambda_j}{2(1+\xi) } }(t)^2\, \mathrm{d}t \leq \|b\|^2_{L^2([0,T])}$. Thus, for $\beta_1 \in [0, \frac{\varepsilon}{2(2+\varepsilon)})$,
		there is a positive constant $C_{\varepsilon,\beta_1,T}$ depending only on $\varepsilon$, $\beta_1$ and $T$, such that
		\begin{align}\label{A.10}
			\int_0^T r_{\frac{\lambda_j}{2(1+\xi)} }(t)^2\mathrm{d}t \leq C_{\varepsilon,\beta_1,T}(1+\xi)^{2\beta_1} \lambda_j^{-2\beta_1}.
		\end{align}
		This together with \eqref{A.9} yields \eqref{4.4}.
		
		We now prove \eqref{4.5}. Similarly, from \eqref{4.2} and Proposition \ref{prop 4.2}, we have
		\begin{align}\label{A.11}
			\begin{aligned}
				&\quad|(a\ast H_{\gamma,\lambda_j,\xi}(\cdot;0,\gamma^{-1},0))'(t)|\\
				&\leq 2\sqrt{2}(1+\xi)^{-\alpha_1} \gamma^{ \frac{\alpha_1}{2}-1 } \lambda_j^{ \frac{\alpha_1}{2} } \int_0^\infty e^{ -\frac{\lambda_j}{2(1+\xi)}\theta }\, \eta(t,\mathrm{d}\theta) +\sqrt{2}\gamma^{-1} \int_0^\infty e^{-\frac{1+\xi}{4\gamma} \theta} \eta(t,\mathrm{d}\theta)
				\\
				&\leq 2\sqrt{2}(1+\xi)^{-\alpha_1} \gamma^{ \frac{\alpha_1}{2}-1 } \lambda_j^{ \frac{\alpha_1}{2} } r_{ \frac{\lambda_j}{2(1+\xi)} }(t) + \sqrt{2} \gamma^{-1} r_{ \frac{1+\xi}{4\gamma} }(t).
			\end{aligned}
		\end{align}
		By the same argument as in the derivation of \eqref{A.10}, we obtain
		\begin{align*}
			\begin{aligned}
				\int_0^T r_{\frac{1+\xi}{4\gamma} }(t)^2\mathrm{d}t
				&\leq C_{\varepsilon,\beta_1,T}(1+\xi)^{-2\beta_1} \gamma^{2\beta_1}.
			\end{aligned}
		\end{align*}	
		Combining the above with \eqref{A.10} and \eqref{A.11} yields \eqref{4.5}.
		
		For assertion (ii), when $(1+\xi)^2 -4\gamma\lambda_j \leq 0$, \eqref{4.1} and Proposition \ref{prop 4.1} give
		\begin{align}\label{A.12}
			\begin{aligned}
				|H_{\gamma,\lambda_j,\xi}(t;0,\gamma^{-1},0)(t)|
				&\leq 3(1+\xi)^{-\alpha_2} \gamma^{-\frac{1-\alpha_2}{2}}\lambda_j^{- \frac{1-\alpha_2}{2}}\int_0^\infty e^{-\frac{1+\xi}{4\gamma}\theta } \eta(t,\mathrm{d}\theta), \quad\\
				&= 3(1+\xi)^{-\alpha_2} \gamma^{-\frac{1-\alpha_2}{2}}\lambda_j^{- \frac{1-\alpha_2}{2}} r_{ \frac{1+\xi}{4\gamma} }(t), \quad  t>0.
			\end{aligned}
		\end{align}	
		Similarly to the derivation of \eqref{A.10}, for $\beta_2 \in [0, \frac{\varepsilon}{2(2+\varepsilon)})$, we have
		\begin{align}\label{A.13}
			\begin{aligned}
				\int_0^T r_{\frac{1+\xi}{4\gamma} }(t)^2\mathrm{d}t
				&\leq C_{\varepsilon,\beta_2,T}(1+\xi)^{-2\beta_2} \gamma^{2\beta_2}.
			\end{aligned}
		\end{align}	
		The above estimate, together with \eqref{A.12}, gives \eqref{4.7}.
		
		Finally, we turn to the proof of \eqref{4.8}. In a similar manner, it follows from \eqref{4.2} and Proposition \ref{prop 4.2} that
		\begin{align*}
			\begin{aligned}
				|(a\ast H_{\gamma,\lambda_j,\xi}(\cdot;0,\gamma^{-1},0))'(t)|
				&\leq 2\sqrt{2}(1+\xi)^{ -\alpha_2 }   \gamma^{\frac{\alpha_2}{2}-1} \lambda_j^{ \frac{\alpha_2}{2} } \int_0^\infty   e^{-\frac{1+\xi}{4\gamma}\theta} \eta(t,\mathrm{d}\theta)\\
				&= 2\sqrt{2}(1+\xi)^{ -\alpha_2 }   \gamma^{\frac{\alpha_2}{2}-1} \lambda_j^{ \frac{\alpha_2}{2} }    r_{\frac{1+\xi}{4\gamma}}(t).
			\end{aligned}
		\end{align*}	
		This together with \eqref{A.13} yields \eqref{4.8}.
	\end{proof}
	
	\subsection{Proof of Lemma \ref{lem 4.3}}
	\begin{proof}
		We first prove \eqref{4.9} and \eqref{4.11}. In the case where $(1+\xi)^2-4\gamma\lambda_j>0$, , it follows from \eqref{4.1} and Proposition \ref{prop 4.1}(take $\alpha =1$) that
		\begin{align}\label{A.14}
			\begin{aligned}
				|H_{\gamma,\lambda_j,\xi}(t;0,\gamma^{-1},0)(t)|
				&\leq 6(1+\xi)^{-1} \int_0^\infty e^{ - \frac{\lambda_j}{2(1+\xi)}\theta } \eta(t,\mathrm{d}\theta), \quad\\
				&= 6(1+\xi)^{-1} r_{ \frac{\lambda_j}{2(1+\xi)} }(t), \quad  t>0.
			\end{aligned}
		\end{align}
		Note that $\delta\in [0, \frac{\varepsilon}{2+\varepsilon})$ implies $\frac{1+\delta}{2}\frac{2+\varepsilon}{1+\varepsilon} \in (0,1)$. Applying Proposition \ref{prop 2.1}(iv), H\"{o}lder's inequality, and Young's successively yields
		\begin{align*}
			\begin{aligned}
				\int_0^T r_{\frac{\lambda_j}{2(1+\xi)} }(t)\mathrm{d}t
				&\leq \int_0^{T} b(t)[1+ \frac{\lambda_j}{2(1+\xi)} (1\ast b)(t)]^{-1}\, \mathrm{d}t\\
				&\leq \|b\|_{L^{2+\varepsilon}([0,T]) } \left(\int_0^T [1+\frac{\lambda_j}{2(1+\xi)}(1\ast b)(t) ]^{- \frac{2+\varepsilon}{1+\varepsilon} }\, \mathrm{d}t \right)^{ \frac{1+\varepsilon}{2+\varepsilon}}\\
				&\leq C_\delta \|b\|_{L^{2+\varepsilon}([0,T])} \left(\int_0^T [\frac{\lambda_j}{2(1+\xi)}(1\ast b)(t)]^{- \frac{(1+\delta)(2+\varepsilon)}{2(1+\varepsilon)} } \, \mathrm{d}t \right)^{ \frac{1+\varepsilon}{2+\varepsilon}}\\
				&\leq C_{\varepsilon,\delta}(1+\xi)^{\frac{1+\delta}{2} } \lambda_j^{-\frac{1+\delta}{2} } b(T)^{-\frac{1+\varepsilon}{2+\varepsilon} } \|b\|_{L^{2+\varepsilon}([0,T])} [(1\ast b)(T)]^{\frac{1+\varepsilon}{2+\varepsilon}-\frac{1+\delta}{2} }.
			\end{aligned}
		\end{align*}
		Thus, there is a positive constant $C_{\varepsilon, \delta, T}$ depending only on
		$\varepsilon$, $\delta$ and $T$, such that
		\begin{align}\label{A.15}
			\int_0^T r_{\frac{\lambda_j}{2(1+\xi)} }(t)\mathrm{d}t \leq C_{\varepsilon,\delta,T}(1+\xi)^{\frac{1+\delta}{2} } \lambda_j^{-\frac{1+\delta}{2}}.
		\end{align}
		The above estimate, together with \eqref{A.14}, gives \eqref{4.9}.
		
		Next, we consider the case $(1+\xi)^2 -4\gamma\lambda_j \leq 0$. Similarly, \eqref{4.1} and Proposition \ref{prop 4.1}(take $\alpha=0 $) give
		\begin{align}\label{A.16}
			\begin{aligned}
				|H_{\gamma,\lambda_j,\xi}(t;0,\gamma^{-1},0)|
				\leq 3\gamma^{-\frac{1}{2} } \lambda_j^{-\frac{1}{2}} r_{ \frac{1+\xi}{4\gamma} }(t), \quad  t>0.
			\end{aligned}
		\end{align}
		Similarly to the derivation of \eqref{A.15}, one can show that
		\begin{align}\label{A.17}
			\int_0^T r_{\frac{1+\xi}{4\gamma} }(t)\mathrm{d}t \leq C_{\varepsilon,\delta,T}(1+\xi)^{-\frac{1+\delta}{2} } \gamma^{\frac{1+\delta}{2}}.
		\end{align}
		This together with \eqref{A.16} yields \eqref{4.11}.
		
		We next establish \eqref{4.10} and \eqref{4.12}. Similarly, we first consider the case $(1+\xi)^2-4\gamma\lambda_j>0$. It follows from \eqref{4.2} and Proposition \ref{prop 4.2} (take $\alpha= 1$) that
		\begin{align}\label{A.18}
			\begin{aligned}
				&\quad|(a\ast H_{\gamma,\lambda_j,\xi}(\cdot;0,\gamma^{-1},0))'(t)|\\
				&\leq 2\sqrt{2}(1+\xi)^{-1} \gamma^{-\frac{1}{2} } \lambda_j^{ \frac{1}{2} } r_{ \frac{\lambda_j}{2(1+\xi)} }(t) + \sqrt{2} \gamma^{-1} r_{ \frac{1+\xi}{4\gamma} }(t).
			\end{aligned}
		\end{align}
		Note that $(1+\xi)^2-4\gamma\lambda_j>0$ shows that $\frac{\gamma}{1+\xi} < \frac{1+\xi}{4\lambda_j}$. Combining this with \eqref{A.15}, \eqref{A.17} and \eqref{A.18} yields
		\begin{align*}
			&\quad \int_0^T |(a \ast H_ {\gamma,\lambda_j,\xi}(\cdot; 0, \gamma^{-1},0))'(t)|\,\mathrm{d}t\\
			&\lesssim (1+\xi)^{-\frac{1-\delta}{2}}\gamma^{ -\frac{1}{2} }  \lambda_j^{-\frac{\delta}{2}}
			+ (1+\xi)^{-\frac{1}{2} } \gamma^{-\frac{1}{2}} \frac{\gamma^{ \frac{\delta}{2} }}{(1+\xi)^{ \frac{\delta}{2}  }  }\\
			&\lesssim (1+\xi)^{-\frac{1-\delta}{2}}\gamma^{ -\frac{1}{2} }  \lambda_j^{-\frac{\delta}{2}},
		\end{align*}
		where the constant in the estimate depends solely on $\varepsilon$, $\delta$ and $T$. Thus \eqref{4.10} is established.
		
		Finally, we consider the case
		$(1+\xi)^2 -4\gamma\lambda_j\leq 0$.
		From \eqref{4.2} and Proposition \ref{prop 4.2}(take $\alpha =0 $) we obtain
		\begin{align}\label{A.19}
			\begin{aligned}
				|(a\ast H_{\gamma,\lambda_j,\xi}(\cdot;0,\gamma^{-1},0))'(t)|
				&\leq 2  \gamma^{-1}    r_{\frac{1+\xi}{4\gamma}}(t).
			\end{aligned}
		\end{align}	
		Combining this with \eqref{A.17} yields \eqref{4.12}.
	\end{proof}
	
	\subsection{Proof of Lemma \ref{lem 4.4}}
	\begin{proof}
		When $(1+\xi)^2-4\gamma\lambda_j\neq0$, from Lemma \ref{lem 3.1} and Proposition \ref{prop 2.1}(i) we have
		\begin{align}\label{A.20}
			\begin{aligned}
				&\quad H_{\gamma,\lambda_j,\xi}(t;h_0,h_1,0)\\
				&=\frac{h_0}{2}(r_{Z_1}(t) +r_{Z_2}(t)) + \frac{h_0}{2\sqrt{(1+\xi)^2-4\gamma\lambda_j} }\int_0^\infty (e^{-Z_1\theta} -e^{-Z_2\theta} )\eta(t,\mathrm{d}\theta) \\
				&\quad +\frac{\gamma h_1}{\sqrt{(1+\xi)^2-4\gamma\lambda_j}}\int_0^\infty (e^{-Z_1\theta} -e^{-Z_2\theta} )\eta(t,\mathrm{d}\theta)\\
				&=\frac{h_0}{2}(r_{Z_1}(t)+r_{Z_2}(t)) + (\frac{1+\xi}{2\gamma}h_0 + h_1)\int_0^\infty g_{\gamma,\lambda_j,\xi}(\theta) \eta(t,\mathrm{d}\theta) \\
				&:=I_1(t)h_0 +I_2(t;h_0,h_1).
			\end{aligned}
		\end{align}
		Also, \eqref{3.14} and Proposition \ref{prop 2.1}(i) give
		\begin{align}\label{A.21}
			\begin{aligned}
				&\quad (a \ast H_{\gamma,\lambda_j,\xi}(\cdot;h_0,h_1,0))'(t)\\
				&=-\gamma^{-1}\lambda_j h_0 \int_0^\infty g_{\gamma,\lambda_j,\xi}(\theta) \eta(t,\mathrm{d}\theta) + h_1\int_0^\infty g'_{\gamma,\lambda_j,\xi}(\theta)\eta(t,\mathrm{d}\theta) \\
				&:=I_3(t)h_0+ I_4(t)h_1.
			\end{aligned}
		\end{align}
		
		We now prove \eqref{4.13}. We first consider the case $(1+\xi)^2-4\gamma\lambda_j \neq 0$. Proposition \ref{prop 2.1}(iv) yields $|I_1(t)| \leq b(t)$. Also, Proposition \ref{prop 4.1} shows that
		\begin{align*}
			|I_2(t;h_0,h_1)|&\leq  (\frac{1+\xi}{2\gamma}|h_0| + |h_1|)\int_0^\infty |g_{\gamma,\lambda_j,\xi}(\theta)| \eta(t,\mathrm{d}\theta)\\
			&\leq (\frac{3}{2}|h_0| +\frac{3\gamma}{1+\xi} |h_1|)b(t).
		\end{align*}
		From \eqref{A.21} we immediately obtain
		$|H_{\gamma,\lambda_j,\xi}(t;h_0,h_1,0)|\leq \frac{5}{2}b(t)|h_0| +\frac{3\gamma}{1+\xi} b(t)|h_1|$. When $1-4\gamma\lambda_j=0$, it follows from Lemma \ref{lem 3.1}(ii) that
		\begin{align}\label{A.22}
			H_{\gamma,\lambda_j,\xi}(t;h_0,h_1,0)=\left[r_{\frac{1+\xi}{2\gamma}}(t) +\frac{1+\xi}{2\gamma}(r_{\frac{1+\xi}{2\gamma}}\ast r_{\frac{1+\xi}{2\gamma}})(t) \right]h_0 +(r_{\frac{1+\xi}{2\gamma}}\ast r_{\frac{1+\xi}{2\gamma}})(t)h_1.
		\end{align}
		By $0<r_{\frac{1+\xi}{2\gamma}}(t) \leq b(t)$ (from Proposition \ref{2.1}(iv)), we obtain
		\begin{align}\label{A.23}
			0<r_{\frac{1+\xi}{2\gamma}}(t) +\frac{1+\xi}{2\gamma}(r_{\frac{1+\xi}{2\gamma}}\ast r_{\frac{1+\xi}{2\gamma}})(t)\leq r_{\frac{1+\xi}{2\gamma}}(t) +\frac{1+\xi}{2\gamma}(b\ast r_{\frac{1+\xi}{2\gamma}})(t)= b(t),
		\end{align}
		which implies $|H_{\gamma,\lambda_j,\xi}(t;h_0,h_1,0)|\leq b(t)|h_0| + \frac{2\gamma}{1+\xi} b(t)|h_1|$.
		
		We now establish \eqref{4.14}. In the case where $(1+\xi)^2 -4\gamma\lambda_j \neq 0$,  it follows from Proposition \ref{prop 4.2}($\alpha =0$) that
		\begin{align}\label{A.24}
			\begin{aligned}
				|I_4(t)| &\leq \int_0^\infty |g'_{\gamma,\lambda_j,\xi}(\theta)|\eta(t,\mathrm{d}\theta)\leq  2\sqrt{2}\int_0^\infty \eta(t,\mathrm{d}\theta)  +\int_0^\infty \sqrt{2}e^{-\frac{1+\xi}{2\gamma} \theta} \eta(r,\mathrm{d}\theta)\\
				&\leq 2(\sqrt{2}+1)b(t).
			\end{aligned}
		\end{align}
		
		If $(1+\xi)^2-4\gamma\lambda_j> 0$,  it follows from Proposition \ref{prop 4.1}($\alpha =1$) that
		\begin{align*}
			|I_3(t)| \leq \gamma^{-1}\lambda_j \int_0^\infty |g_{\gamma,\lambda_j,\xi}(\theta)| \eta(t,\mathrm{d}\theta)\leq \frac{3\lambda_j}{1+\xi} \int_0^\infty \eta(t,\mathrm{d}\theta)\leq \frac{3\lambda_j}{1+\xi}b(t).
		\end{align*}
		This together with \eqref{A.21} and \eqref{A.24} yields
		\begin{align*}
			|(a \ast H_{\gamma,\lambda_j,\xi}(\cdot;h_0,h_1,0))'(t)|\leq\frac{3\lambda_j}{1+\xi}b(t)|h_0| +2(\sqrt{2}+1)b(t)|h_1|.
		\end{align*}
		
		If $(1+\xi)^2-4\gamma\lambda_j= 0$, from \eqref{3.15} we obtain
		\begin{align*}
			&\quad (a \ast H_{\gamma,\lambda_j,\xi}(\cdot;h_0,h_1,0))'(t)\\
			&=-\frac{(1+\xi)^2}{4\gamma^2}(r_{\frac{1+\xi}{2\gamma}}\ast r_{\frac{1+\xi}{2\gamma}})(t)h_0
			+ \left[ r_{\frac{1+\xi}{2\gamma} }(t)-\frac{1+\xi}{2\gamma}(r_{\frac{1+\xi}{2\gamma}}\ast r_{\frac{1+\xi}{2\gamma}})(t) \right]h_1.
		\end{align*}
		Combining the above with \eqref{A.23} shows that
		\begin{align*}
			|(a \ast H_{\gamma,\lambda_j,\xi}(\cdot;0,h_1,0))'(t) |\leq \frac{2\lambda_j}{1+\xi} b(t)|h_0| + b(t) |h_1|.
		\end{align*}
		
		If $(1+\xi)^2 -4\gamma\lambda_j<0$,
		from Proposition \ref{prop 4.1}($\alpha =0$) we have
		\begin{align*}
			|I_3(t)| &\leq \gamma^{-1}\lambda_j \int_0^\infty |g_{\gamma,\lambda_j,\xi}(\theta)| \eta(t,\mathrm{d}\theta)\leq 3 \gamma^{-\frac{1}{2} } \lambda_j^{\frac{1}{2}} \int_0^\infty \eta(t,\mathrm{d}\theta)\leq 3 \gamma^{-\frac{1}{2} } \lambda_j^{\frac{1}{2}}b(t).
		\end{align*}
		The above estimate together with \eqref{A.21} and \eqref{A.24} gives
		\begin{align*}
			|(a \ast H_{\gamma,\lambda_j,\xi}(\cdot;h_0,h_1,0))'(t)|\leq 3 \sqrt{\frac{\lambda_j}{\gamma} } b(t)|h_0| +2(\sqrt{2}+1)b(t)|h_1|.
		\end{align*}
		The proof is completed.
	\end{proof}
	
	\section{Space compactness and time regularity}\label{app B}
	\subsection{Proof of Proposition \ref{prop 5.1}}
	\begin{proof}
		Set $\beta_1 = \frac{\varepsilon}{4(2+\varepsilon)}$ and $\alpha_1=2\beta_1$. Clearly, $\alpha_1$ and $\beta_1$ satisfy \eqref{4.3}. Thus, by Lemmas \ref{lem 5.1}(i) and \ref{lem 5.2}(i), respectively, for any $\delta \in [0,\frac{1}{2})$ and $\xi \geq 0$,
		\begin{align*}
			\begin{aligned}
				&\mathbb E \left[ \int_0^T\|P_{N_{\gamma,\xi} } \mathcal I_1 \Theta_{\gamma,\xi}(t)\|^2_{H^\delta } \, \mathrm{d}t \right] \leq C_{\gamma,\varepsilon, \delta, T}
				\mathbb E \left[ \int_0^T \|\Upsilon(t)\|^2_{\mathcal L(H)}\, \mathrm{d}t \right],\\
				&\mathbb E \left[ \int_0^T\|P_{N_{\gamma,\xi} } \mathcal I_2 \Theta_{\gamma,\xi}(t)\|^2_{H^{\delta-1} } \, \mathrm{d}t \right]\leq C_{\gamma, \varepsilon,\delta, T}
				\mathbb E \left[ \int_0^T \|\Upsilon(t)\|^2_{\mathcal L(H)}\, \mathrm{d}t \right].
			\end{aligned}
		\end{align*}
		We next take $\alpha_2=0$ and  $\beta_2 = \frac{\varepsilon}{4(2+\varepsilon)}$ . One readily verifies that $\alpha_2$ and $\beta_2$ satisfy \eqref{4.6}.  Applying Lemma \ref{lem 5.1}(ii) and Lemma \ref{lem 5.2}(ii), respectively, we have that for any $\delta \in [0,\frac{1}{2})$ and $\xi \geq 0$,
		\begin{align*}
			&\mathbb E \left[ \int_0^T\|(I-P_{N_{\gamma,\xi} })\mathcal I_1 \Theta_{\gamma,\xi}(t)\|^2_{H^\delta } \, \mathrm{d}t \right] \leq C_{\gamma,\varepsilon, \delta,T}
			\mathbb E \left[ \int_0^T \|\Upsilon(t)\|^2_{\mathcal L(H)}\, \mathrm{d}t \right],\\
			&\mathbb E \left[ \int_0^T\|(I-P_{N_{\gamma,\xi} })\mathcal I_2 \Theta_{\gamma,\xi}(t)\|^2_{H^{\delta-1} } \, \mathrm{d}t \right] \leq C_{\gamma, \varepsilon, \delta, T}
			\mathbb E \left[ \int_0^T \|\Upsilon(t)\|^2_{\mathcal L(H)}\, \mathrm{d}t \right].
		\end{align*}
		The above four inequalities, together with the linear growth condition for $\Phi$ in $(\mathcal M_4)$,  immediately give \eqref{5.15}.
	\end{proof}
	
	\subsection{Proof of Proposition \ref{prop 5.2}}
	\begin{proof}
		We first estimate the displacement component. By Fubini's theorem and It\^o isometry, we obtain
		\begin{align}\label{B.1}
			\begin{aligned}
				&\quad \mathbb E \left[ \int_0^{T-\varrho}\|\mathcal I_1 \Theta_{\gamma,\xi}(t+\varrho)-\mathcal I_1\Theta_{\gamma,\xi}(t)\|^2_{H}\, \mathrm{d}t \right] \\
				&\lesssim \int_0^{T-\varrho}\mathbb E \left[ \left\| \int_0^{t}   \left(\mathcal I_1 \mathcal R_{\gamma,\xi}(t+\varrho-\zeta)-\mathcal I_1 \mathcal R_{\gamma,\xi}(t-\zeta) \right) J_\gamma \Upsilon(\zeta)\, \mathrm{d}W(\zeta) \right \|^2_{H} \right] \, \mathrm{d}t  \\
				&\quad+  \int_0^{T-\varrho}  \mathbb E \left[ \left\|\int_{t}^{t+\varrho} \mathcal I_1 \mathcal R_{\gamma,\xi}(t+\rho-\zeta)  J_\gamma \Upsilon(\zeta)\, \mathrm{d}W(\zeta)    \right \|^2_{H}\right]\, \mathrm{d}t  \\
				&=\int_0^{T-\varrho} \mathbb E \left[ \int_0^{t} \left\|(\mathcal I_1 \mathcal R_{\gamma,\xi}(t+\varrho-\zeta) - \mathcal I_1 \mathcal R_{\gamma,\xi}(t-\zeta)) J_\gamma \Upsilon(\zeta) \right\|^2_{\mathcal L_2(H,H)}  \, \mathrm{d}\zeta \right]\,\mathrm{d}t\\
				&\quad +\int_0^{T-\varrho} \mathbb E \left[  \int_{t}^{t+\varrho} \left\| \mathcal I_1 \mathcal R_{\gamma,\xi}(t+\varrho-\zeta)  J_\gamma \Upsilon(\zeta)\,  \right \|^2_{\mathcal L^2(H,H)} \mathrm{d}\zeta  \right]\, \mathrm{d}t  \\
				&=\int_0^{T-\varrho}\mathbb E \left[ \sum_{j=1}^{\infty } \int_0^{t} \left \|\left(\mathcal I_1 \mathcal R_{\gamma,\xi}(t+\varrho-\zeta) - \mathcal I_1 \mathcal R_{\gamma,\xi}(t-\zeta) \right) J_\gamma \Upsilon(\zeta)e_j \right \|^2_{H}  \, \mathrm{d}\zeta \right]\, \mathrm{d}t\\
				&\quad+ \int_0^{T-\varrho} \mathbb E \left[ \sum_{j=1}^{\infty } \int_{t}^{t+\varrho} \left \|\mathcal I_1 \mathcal R_{\gamma,\xi}(t+\varrho-\zeta)J_\gamma \Upsilon(\zeta)e_j \right\|^2_{H}  \, \mathrm{d}\zeta \right] \,\mathrm{d}t\\
				&:=\mathbb E\left[\int_0^{T-\varrho}  \mathcal G_{5,\gamma,\xi}(t)  \, \mathrm{d}t\right] +\mathbb E\left[\int_0^{T-\varrho}  \mathcal G_{6,\gamma,\xi}(t)  \, \mathrm{d}t\right] .
			\end{aligned}
		\end{align}
		Similarly to the derivation of \eqref{5.4}, we get
		\begin{align*}
			&\quad \mathcal G_{5,\gamma,\xi}(t)\\
			&=\sum_{i=1}^{ \infty } \sum_{j=1}^\infty \int_0^{t}  |H_{\gamma,\lambda_i,\xi}(t+\varrho-\zeta;0,\gamma^{-1},0)-H_{\gamma,\lambda_i,\xi}(t-\zeta;0,\gamma^{-1},0)|^2 \left \langle \Upsilon(\zeta)e_j, e_i \right \rangle^2\, \mathrm{d}\zeta,\\
			&\mathcal G_{6,\gamma,\xi}(t) = \sum_{i=1}^{ \infty } \sum_{j=1}^\infty \int_t^{t+\varrho} |H_{\gamma,\lambda_i,\xi}(t+\varrho-\zeta;0,\gamma^{-1},0)|^2 \left \langle \Upsilon(\zeta)e_j, e_i \right \rangle^2  \, \mathrm{d}\zeta.
		\end{align*}
		Combining each of the above two estimates with \eqref{5.5}, respectively, yields
		\begin{align*}
			&\quad \mathcal G_{5,\gamma,\xi}(t)\\
			&\leq \int_0^{t} \|\Upsilon(\zeta)\|^2_{\mathcal L(H)}  \sum_{i=1}^{ \infty } |H_{\gamma,\lambda_i,\xi}(t+\varrho-\zeta;0,\gamma^{-1},0)-H_{\gamma,\lambda_i,\xi}(t-\zeta;0,\gamma^{-1},0)|^2\, \mathrm{d}\zeta,\\
			&G_{6,\gamma,\xi}(t)
			\leq \int_t^{t+\varrho} \|\Upsilon(\zeta)\|^2_{\mathcal L(H)} \sum_{i=1}^{ \infty } |H_{\gamma,\lambda_i,\xi}(t+\varrho-\zeta;0,\gamma^{-1},0)|^2\, \mathrm{d}\zeta.
		\end{align*}
		Moreover, by an argument similar to the derivation of \eqref{5.6}, we obtain
		\begin{align}\label{B.2}
			\begin{aligned}
				\int_0^{T-\varrho} \mathcal G_{5,\gamma,\xi}(t)\, \mathrm{d}t
				&\leq \int_0^{T-\varrho} \|\Upsilon(t)\|^2_{\mathcal L(H)}\, \mathrm{d}t \\
				&\quad \times \sum_{i=1}^{ \infty } \int_{0}^{T}  |H_{\gamma,\lambda_i,\xi}(t+\varrho;0,\gamma^{-1},0)-H_{\gamma,\lambda_i,\xi}(t;0,\gamma^{-1},0)|^2\, \mathrm{d}t,
			\end{aligned}
		\end{align}
		and
		\begin{align}\label{B.3}
			\begin{aligned}
				&\quad \int_0^{T-\varrho} \mathcal G_{6,\gamma,\xi}(t)\, \mathrm{d}t\\
				&\leq \int_0^{\varrho} \|\Upsilon(\zeta)\|^2_{\mathcal L(H)}  \int_{0}^{\zeta}  \sum_{i=1}^{ \infty }  |H_{\gamma,\lambda_i,\xi}(t+\varrho-\zeta;0,\gamma^{-1},0)|^2\, \mathrm{d}t \mathrm{d}\zeta\\
				&\quad + \int_{\varrho}^{T-\varrho} \|\Upsilon(\zeta)\|^2_{\mathcal L(H)} \int_{\zeta-\varrho}^{\zeta}  \sum_{i=1}^{ \infty }  |H_{\gamma,\lambda_i,\xi}(t+\varrho-\zeta;0,\gamma^{-1},0)|^2\, \mathrm{d}t\mathrm{d}\zeta\\
				&\quad + \int_{T-\varrho}^T \|\Upsilon(\zeta)\|^2_{\mathcal L(H)}  \int_{\zeta-\varrho}^{T-\varrho}  \sum_{i=1}^{ \infty }  |H_{\gamma,\lambda_i,\xi}(t+\varrho-\zeta;0,\gamma^{-1},0)|^2\, \mathrm{d}t\mathrm{d}\zeta\\
				&\leq \int_0^{T} \|\Upsilon(t)\|^2_{\mathcal L(H)}  \mathrm{d}t \times \sum_{i=1}^{ \infty } \int_{0}^{\varrho}   |H_{\gamma,\lambda_i,\xi}(t;0,\gamma^{-1},0)|^2\, \mathrm{d}t.
			\end{aligned}
		\end{align}
		
		We begin with the treatment of \eqref{B.2}. Choose the parameters
		$\beta_1=\beta_2 \in (0,\frac{\varepsilon}{2(2+\varepsilon)})$, $\alpha_1=2\beta_1$ and $\alpha_2=0$. By \eqref{4.4} and \eqref{4.7}, it follows that
		$H_{\gamma,\lambda_i,\xi}(\cdot;0,\gamma^{-1},0) \in L^2_{loc}(\mathbb R_+)$ for each $\xi\geq 0$ and each $\lambda_i$. Consequently, a classical density argument yields
		\begin{align*}
			\int_{0}^{T}|H_{\gamma,\lambda_i,\xi}(t+\varrho;0,\gamma^{-1},0)-H_{\gamma,\lambda_i,\xi}(t;0,\gamma^{-1},0)|^2\, \mathrm{d}t \to 0 \text{ ~as~ } \varrho \to 0^+
		\end{align*}
		for each $\xi\geq 0$ and each $\lambda_i$. Moreover, from \eqref{4.4} and \eqref{4.7}, we also have
		\begin{align*}
			&\int_0^T |H_{\gamma,\lambda_i,\xi}(t;0,\gamma^{-1},0)|^2\, \mathrm{d}t
			\leq C_{\gamma,T}  \lambda_i^{-1},\\
			&\int_0^{T} |H_{\gamma,\lambda_i,\xi}(t+\varrho;0,\gamma^{-1},0)|^2\, \mathrm{d}t\leq \int_0^{T+1} |H_{\gamma,\lambda_i,\xi}(t;0,\gamma^{-1},0)|^2\, \mathrm{d}t\leq C_{\gamma,T}\lambda_i^{-1}
		\end{align*}
		for each $\varrho\in(0,1)$, each $\xi \geq 0$ and each $\lambda_i$. This implies
		\begin{align*}
			\int_{0}^{T}|H_{\gamma,\lambda_i,\xi}(t+\varrho;0,\gamma^{-1},0)-H_{\gamma,\lambda_i,\xi}(t;0,\gamma^{-1},0)|^2\, \mathrm{d}t
		\end{align*}
		can be bounded by $\lambda_i^{-1}$.
		Then, applying the dominated convergence theorem yields
		\begin{align}\label{B.4}
			\sum_{i=1}^{ \infty }  \int_{0}^{T}|H_{\gamma,\lambda_i,\xi}(t+\varrho;0,\gamma^{-1},0)-H_{\gamma,\lambda_i,\xi}(t;0,\gamma^{-1},0)|^2\, \mathrm{d}t \to 0 \text{ ~as~ } \varrho \to 0^+.
		\end{align}
		
		Now turn to \eqref{B.3}. Take $\beta_1=\beta_2 \in (0,\frac{\varepsilon}{2(2+\varepsilon)})$, $\alpha_1=2\beta_1$, $\alpha_2=0$. By \eqref{4.4} and \eqref{4.7},
		\begin{align*}
			\lim_{\varrho \to 0^+} \int_{0}^{\varrho}   |H_{\gamma,\lambda_i,\xi}(t;0,\gamma^{-1},0)|^2\, \mathrm{d}t \to 0 \text{ ~as~ } \varrho \to 0^+.
		\end{align*}
		Applying the dominated convergence theorem again yields
		\begin{align}\label{B.5}
			\sum_{i=1}^{ \infty } \int_{0}^{\varrho}   |H_{\gamma,\lambda_i,\xi}(t;0,\gamma^{-1},0)|^2\, \mathrm{d}t \to 0 \text{ ~as~ } \varrho \to 0^+
		\end{align}
		
		From \eqref{B.1}, \eqref{B.2}, \eqref{B.3}, \eqref{B.4}, \eqref{B.5}, together with the linear growth condition on $\Phi$ in $(\mathcal M_4)$, it follows that for any $\xi \geq 0$,
		\begin{align}\label{B.6}
			\mathbb E\left[ \int_0^{T-\varrho} \|\mathcal I_1 \Theta_{\gamma,\xi}(t+\varrho) -\mathcal I_1\Theta_{\gamma,\xi}(t)\|_{H} \, \mathrm{d}t \right] \leq C_{\gamma,T} o(1) (1+\mathbb E\left[ \|\mathcal I_1 Y\|_{L^2([0,T];H)}^2 \right]).
		\end{align}
		
		Next, we estimate the velocity component. Similarly, by Fubini's theorem and It\^o isometry, we have
		\begin{align*}
			\begin{aligned}
				\mathbb E \left[ \int_0^{T-\varrho}\|\mathcal I_2 \Theta_{\gamma,\xi}(t+\varrho)-\mathcal I_1\Theta_{\gamma,\xi}(t)\|^2_{H}\, \mathrm{d}t \right]
				\leq \mathbb E\left[\int_0^{T-\varrho}  \mathcal G_{7,\gamma,\xi}(t)  \, \mathrm{d}t\right] +\mathbb E\left[\int_0^{T-\varrho}  \mathcal G_{8,\gamma,\xi}(t)  \, \mathrm{d}t\right],
			\end{aligned}
		\end{align*}
		where
		\begin{align*}
			\begin{aligned}
				&\quad \int_0^{T-\varrho} \mathcal G_{7,\gamma,\xi}(t)\, \mathrm{d}t\\
				&\lesssim \int_0^{T-\varrho} \|\Upsilon(t)\|^2_{\mathcal L(H)}\, \mathrm{d}t \\
				&\quad \times \sum_{i=1}^{ \infty } \lambda_i^{-1} \int_{0}^{T}  |(a\ast H_{\gamma,\lambda_i,\xi}(\cdot;0,\gamma^{-1},0))'(t+\varrho)-(a\ast H_{\gamma,\lambda_i,\xi}(\cdot;0,\gamma^{-1},0))'(t)|^2\, \mathrm{d}t,
			\end{aligned}
		\end{align*}
		and
		\begin{align*}
			\int_0^{T-\varrho} \mathcal G_{8,\gamma,\xi}(t)\, \mathrm{d}t
			\lesssim \int_0^{T} \|\Upsilon(t)\|^2_{\mathcal L(H)}  \mathrm{d}t \times \sum_{i=1}^{ \infty } \lambda_i^{-1}\int_{0}^{\varrho}   |(a \ast H_{\gamma,\lambda_i,\xi}(\cdot;0,\gamma^{-1},0))'(t)|^2\, \mathrm{d}t.
		\end{align*}
		By the same argument as in the derivation of \eqref{B.6}, it suffices to show that for any $\xi \geq 0$,
		\begin{align}\label{B.7}
			\mathbb E\left[ \int_0^{T-\varrho} \|\mathcal I_2 \Theta_{\gamma,\xi}(t+\varrho) -\mathcal I_2\Theta_{\gamma,\xi}(t)\|_{H^{-1}} \, \mathrm{d}t \right] \leq C_{\gamma,T} o(1) \mathbb E\left[ \int_0^T\|\mathcal I_1 Y(t)\|_{H}^2\, \mathrm{d}t \right].
		\end{align}
		The derivation of \eqref{B.7} differs from that of \eqref{B.6} only in that \eqref{4.5} and \eqref{4.8} are employed in place of the corresponding estimates. We therefore omit the details. Finally, \eqref{5.14} follows immediately from \eqref{B.6} and \eqref{B.7}.
	\end{proof}
	
	\section*{Statements and Declarations}
	\noindent{\bf Competing interests}\\
	The authors declare that there is no conflict of interests regarding the publication of this paper.\\
	\noindent{\bf Funding}\\
	This work was supported by National Natural Science Foundation of China (12471127).\\
	\noindent{\bf Availability of data and materials}\\
	Not applicable.

\end{document}